\documentclass[11pt,reqno]{amsart}
\usepackage[tmargin=1in,bmargin=1in,rmargin=1in,lmargin=1in]{geometry}
\usepackage[mathscr]{eucal}
\usepackage[breaklinks=true]{hyperref}

\usepackage{longtable}
\usepackage{booktabs}
\usepackage{tabularx}
\usepackage{makecell}
\usepackage{xcolor}

\usepackage{amsmath,amsthm,amsfonts,amssymb,tocvsec2}
\usepackage{mathdots}
\usepackage{mathtools}
\usepackage{caption}
\usepackage{subcaption}
\usepackage{float}
\usepackage{multicol}
\usepackage{enumitem}
\usepackage{rotating}
\usepackage{multirow}

\theoremstyle{plain}
\newtheorem{theorem}{Theorem}[section]

\newtheorem{lemma}[theorem]{Lemma}

\newtheorem{cor}[theorem]{Corollary}

\theoremstyle{definition}
\newtheorem{defn}[theorem]{Definition}

\newtheorem{rem}[theorem]{Remark}
\newtheorem{example}[theorem]{Example}

\numberwithin{equation}{section}

\DeclareMathOperator{\rk}{rank}
\DeclareMathOperator{\cpr}{cpr}

\DeclareMathOperator{\diag}{diag}

\begin{document}
	\title[Linear preserver problems in matrix positivity theory]{Linear preserver problems in matrix positivity theory}

	\author{Projesh Nath Choudhury and Shivangi Yadav}
	\address[P.N.~Choudhury]{Department of Mathematics, Indian Institute of Technology Gandhinagar, Gujarat 382355, India}
	\email{\tt projeshnc@iitgn.ac.in} 
	\address[S.~Yadav]{Department of Mathematics, Indian Institute of Technology Gandhinagar, Gujarat 382355, India}
	\email{\tt shivangi.yadav@iitgn.ac.in, shivangi97.y@gmail.com}

	\date{\today}
	
	\begin{abstract}
		Linear preserver problems have been a central focus of research in matrix theory and operator theory for more than a century, beginning with Frobenius' 1897 characterization of determinant preserving linear maps on the space of complex matrices. Since this foundational result, considerable work has examined linear preservers of diverse subsets, functions, and relations across different matrix and operator spaces. The purpose of this survey is to present the current state of research on linear preserver problems for several positivity classes of matrices. We provide an overview of recent developments in the literature and, for each positivity class considered, identify gaps that remain to guide future research.
	\end{abstract}
	
	\subjclass[2020]{15A86, 47B49, 15B48}
	
	\keywords{Linear preserver problem, positive semidefinite matrix, completely positive matrix, completely positive rank, copositive matrix, totally positive matrix, sign regular matrix, M-matrix, inverse M-matrix, H-matrix, P-matrix, $\mathcal{D}$-stable matrix, $\mathcal{A}$-matrix, semipositive matrix}
	
	\maketitle
	
	\vspace*{-11mm}
	\settocdepth{subsection}
	\tableofcontents
	
	\section{Introduction}
	The study of linear maps on spaces of bounded linear operators with special properties such as leaving certain subsets, functions, or relations invariant is a rich and active area of research in matrix theory and operator theory. The origin of preserver problems is attributed to Frobenius~\cite{Frobenius1897}, who posed the fundamental question: which linear maps on the space of $n\times n$ complex matrices preserve the determinant? It is straightforward to verify that all maps of the form
    \begin{align}\label{det-preserver}
    	A \mapsto MAN \quad \text{or} \quad A \mapsto MA^TN,
    \end{align} 
    where $M,N\in\mathbb{C}^{n\times n}$ are invertible matrices satisfying $\det (MN)=1$, preserve the determinant of all $A\in\mathbb{C}^{n\times n}$. In 1897, Frobenius showed that, in fact, every determinant preserver must be of the form~\eqref{det-preserver}.
    
    Subsequent work extended Frobenius' result in various directions. For example, Eaten~\cite{Eaten} studied linear maps on the space of symmetric matrices that scale the determinant by some fixed non-zero real constant. Similar results were obtained by Dolinar--\v{S}emrl~\cite{Dolinar} (on the space of complex square matrices), Tan--Wang~\cite{Tan} (on the space of square matrices over any field $\mathbb{F}$, as well as on the space of upper triangular matrices over an arbitrary field $\mathbb{F}$), and Cao--Tang~\cite{Cao} (on the space of symmetric matrices over any field $\mathbb{F}$). In all these cases, the authors characterized linear maps $\phi$ satisfying
    \begin{align*}
    	\det(\alpha \phi(A) + \phi(B)) = \det(\alpha A + B), ~ \text{for all} ~ \alpha\in\mathbb{F}.
    \end{align*}
    Recently, Nagy~\cite{Nagy17} established an infinite-dimensional analogue of Frobenius' result. In addition to the study of determinant preservers, numerous works have examined linear maps that preserve other structural properties. 
    
    The beauty of linear preserver problems lies in the simplicity of their formulation and the elegance of their solutions. Moreover, they are also important because they often simplify otherwise difficult questions. For instance, computing the permanent of a matrix is significantly more challenging than computing its determinant, especially for higher order matrices. This difficulty motivated P\'olya~\cite{Polya} to ask whether there exist linear transformations that convert the permanent of a matrix into its determinant; that is, classify all linear maps
	\begin{align*}
		\phi:\mathbb{R}^{n\times n}\to\mathbb{R}^{n\times n} \quad \text{such that} \quad \mathrm{per}\phi(A) = \det(A).
	\end{align*}
	If such maps existed, the computation of permanents could be greatly simplified via the linear transformation $\phi$. However, Marcus--Minc~\cite{Marcus_etal} showed that no such linear operator exists for $n\geq3$. Other examples arise in the investigation of linear maps that transform quantum systems while preserving entropy (see, for instance, \cite{Fosner_et_al_survey}), as well as in the characterization of linear operators on matrix spaces that preserve controllable matrix pairs (see~\cite{Fung96} and the references therein). A further example concerns the use of linear transformations to simplify the analysis of systems of differential equations, particularly those linear maps that preserve eigenmodes or stability of the system. Addressing such problems provides mechanisms for reducing complex systems to more tractable forms, while ensuring that their essential structural properties are preserved. Collectively, these problems fall within the framework of linear preserver problems, underscoring both its breadth and its significance.
	
	Linear preserver problems are also widely studied within matrix positivity theory. Positivity classes of matrices are objects of central interest in mathematics, as they arise naturally in diverse areas, e.g., positive semidefinite matrices have applications in differential equations~\cite{Hahn}, numerical analysis~\cite{Trefethen-Bau}, and statistics~\cite{CR_Rao75}; copositive matrices are well studied in optimization theory~\cite{Dur} and matrix theory~\cite{Monderer-Berman}; semipositive matrices appear in different problems, such as the characterization of M-matrices, linear complementarity problems, and economic models~\cite{Berman_Plemmons}; and totally nonnegative matrices and kernels~\cite{K68} arise in probability and statistics~\cite{Karlin64}, particle systems~\cite{GK37,GK50}, representation theory~\cite{Lu94, Ri03}, and cluster algebras / totally nonnegative Grassmannian~\cite{FZ02,Galashin_et_al_22,Postnikov06}. Despite extensive work in classifying linear preservers for such matrix classes, many questions remain open. For example, the classification of linear operators on the space of $n\times n$ complex matrices that map the set of positive semidefinite matrices into itself remains unresolved.
	
	The goal of this survey is to present the recent progress on linear preserver problems for certain positivity classes of matrices. The problems under consideration in this survey can be formulated as follows: given a finite-dimensional vector space $V$ and a subset $\mathcal{S}\subseteq V$, characterize all linear operators
	\begin{align*}
		\mathcal{L}:V\to V \quad \text{such that} \quad \mathcal{L}(\mathcal{S}) = \mathcal{S} \quad \text{or} \quad \mathcal{L}(\mathcal{S}) \subseteq \mathcal{S}.
	\end{align*}
	If $\mathcal{L}$ satisfies the former condition, it is called an \textit{onto $\mathcal{S}$-preserver}; if it satisfies the latter, it is called an \textit{into $\mathcal{S}$-preserver}. In our discussion, $V$ will denote either the space of $m\times n$ real/complex matrices or the real space of symmetric/Hermitian matrices, and $\mathcal{S}$ will represent a positivity class.
	
	Before proceeding further, we record the following observation: if the set $\mathcal{S}$ contains a basis of $V$, then every onto $\mathcal{S}$-preserver must be an invertible linear map. Consequently, the inverse map is also an $\mathcal{S}$-preserver, which makes their classification comparatively easier. In contrast, the characterization of into linear preservers is considerably more challenging. As a result, researchers often impose additional constraints on linear preservers, typically assuming them to be invertible or to possess a special structural form. For instance, Hershkowitz--Johnson~\cite{HJ86} classified invertible into preservers of P-matrices and several closely related matrix classes, while Furtado--Johnson--Zhang~\cite{Furtado_et_al_21} and Dorsey--Gannon--Jacobson--Johnson--Turnansky~\cite{Dorsey_et_al_16} characterized linear transformations of standard form that preserve the copositive and semipositive matrix classes, respectively, in the into sense. In such cases, the classification problem reduces to determining the structural conditions that these linear maps must satisfy.
	
	\section{Linear maps preserving positivity classes of matrices}
	
	This section is devoted to linear preservers associated with various positivity classes of matrices. For each such class and its corresponding body of work, we present a dedicated subsection in which we highlight key results, identify gaps in the existing literature, and formulate open questions that naturally arise from the discussion. To set the stage, we start by recalling some basic definitions, notations, and a preliminary result.
	
	\begin{defn}
		Let $m,n\geq1$ be positive integers throughout.
		\begin{itemize}
			\item[(1)] We denote by $[n]$ the set of the first $n$ positive integers.
			
			\item[(2)] Let $\mathcal{S}_n$ denote the subspace of all $n\times n$ real symmetric matrices.
			
			\item[(3)] We define $\mathcal{D}(n)$ to be the set of all $n\times n$ diagonal matrices, and $\mathcal{D}_{\scriptscriptstyle{>0}}(n)$ the set of all $n\times n$ diagonal matrices with positive diagonal entries. Similarly, one can define $\mathcal{D}_{\scriptscriptstyle{\geq0}}(n)$.
			
			\item[(4)] Let $\mathbb{R}^{m\times n}_{\scriptscriptstyle\geq0}$ denote the set of all $m\times n$ matrices with nonnegative entries. If $A\in\mathbb{R}^{m\times n}_{\scriptscriptstyle\geq0}$, we refer to $A$ as a \textit{nonnegative matrix}. 
			We define $\mathbb{R}^n_{\scriptscriptstyle>0}$ as the set of all $n$-dimensional vectors with positive entries. One can similarly define $\mathbb{R}^n_{\scriptscriptstyle\geq0}$. 
			
			
			\item[(5)] A \textit{signature matrix} is a diagonal matrix with diagonal entries in $\{\pm1\}$.
			
			\item[(6)] A matrix with exactly one non-zero entry in each row and column, and zeros elsewhere, is called a \textit{monomial matrix}; equivalently, it is the product of a diagonal matrix with non-zero diagonal entries and a permutation matrix.
			
			\item[(7)] Let $A=(a_{ij})$ and $B=(b_{ij})$ be two matrices of the same size. The \textit{Hadamard (or entrywise) product} of $A$ and $B$ is defined by
			\begin{align*}
				A\circ B:=(a_{ij}b_{ij}).
			\end{align*}
			
			\item[(8)] Let $A,B\in\mathbb{C}^{n\times n}$. We say that $A$ is \textit{congruent} to $B$ if there exists an invertible matrix $S\in\mathbb{C}^{n\times n}$ such that $A=SBS^T$. If instead $A = SBS^*$, then $A$ is said to be \textit{$^*congruent$} to $B$. In addition, if the matrix $S$ is monomial, these relations are referred to as \textit{monomial congruence} and \textit{monomial $^*congruence$}, respectively.
			
			
			
			\item[(9)] A linear map $\mathcal{L}:\mathbb{R}^{m\times n}\to\mathbb{R}^{m\times n}$ is said to be of \textit{standard form} if it admits a representation of the form
			\begin{align*}
				\mathcal{L}(A) = XAY \quad \text{or} \quad \mathcal{L}(A) = XA^TY,
			\end{align*}
			where $X$ and $Y$ are fixed matrices of appropriate sizes. Such a transformation is bijective if and only if both $X$ and $Y$ are invertible.
		\end{itemize}
	\end{defn}
	
	We now recall a lemma that plays a key role in classifying linear preservers of subsets of a finite-dimensional vector space via the linear preservers of their closures.
	
	\begin{lemma}\cite[Lemma 1]{BHJ85}\label{lemma_preserver}
		Let $V$ be a finite-dimensional vector space and let $\mathcal{S}\subseteq V$. Suppose that $P(\mathcal{S})$ denotes the set of all onto $\mathcal{S}$-preservers. Then $P(\mathcal{S})\subseteq P(\overline{\mathcal{S}})$.
	\end{lemma}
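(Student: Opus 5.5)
The plan is to show that if $\mathcal{L}(\mathcal{S})=\mathcal{S}$, then $\mathcal{L}(\overline{\mathcal{S}})=\overline{\mathcal{S}}$. The only tool needed is continuity of linear maps on a finite-dimensional space, where all norms are equivalent and every linear map is continuous. A subtlety is that $\mathcal{L}$ need not be invertible a priori; for instance, $\mathcal{S}$ could lie in a proper subspace. The argument is therefore arranged so that it uses only continuity of $\mathcal{L}$ and never of $\mathcal{L}^{-1}$.

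\textbf{Inclusion $\mathcal{L}(\overline{\mathcal{S}})\subseteq\overline{\mathcal{S}}$.} Take $x\in\overline{\mathcal{S}}$ and choose $x_k\in\mathcal{S}$ with $x_k\to x$. By continuity, $\mathcal{L}(x_k)\to\mathcal{L}(x)$. Each $\mathcal{L}(x_k)$ lies in $\mathcal{S}$, so $\mathcal{L}(x)\in\overline{\mathcal{S}}$. Equivalently, $\mathcal{L}(\overline{\mathcal{S}})\subseteq\overline{\mathcal{L}(\mathcal{S})}=\overline{\mathcal{S}}$.

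\textbf{Inclusion $\overline{\mathcal{S}}\subseteq\mathcal{L}(\overline{\mathcal{S}})$.} This is the main obstacle. Given $y\in\overline{\mathcal{S}}$, write $y=\lim y_k$ with $y_k\in\mathcal{S}=\mathcal{L}(\mathcal{S})$, so $y_k=\mathcal{L}(x_k)$ for some $x_k\in\mathcal{S}$. The sequence $(x_k)$ may be unbounded, so compactness cannot be applied directly. To handle this, decompose $V=\ker\mathcal{L}\oplus W$ for a complement $W$. The restriction $\mathcal{L}|_W:W\to\operatorname{im}\mathcal{L}$ is a linear bijection between finite-dimensional spaces, hence a homeomorphism. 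Note that $y\in\operatorname{im}\mathcal{L}$, since the image is a closed subspace containing every $y_k$. Set $w_k=(\mathcal{L}|_W)^{-1}(y_k)$; then $w_k\to w:=(\mathcal{L}|_W)^{-1}(y)$. However, $w_k$ differs from $x_k$ by an element of $\ker\mathcal{L}$, so $w_k$ need not lie in $\mathcal{S}$.

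The cleanest fix is to handle the kernel first. In every application in this survey, $\mathcal{S}$ spans $V$ (as noted in the introduction). In that case, $\mathcal{L}(\mathcal{S})=\mathcal{S}$ forces $\mathcal{L}(V)=\operatorname{span}\mathcal{S}=V$, so $\mathcal{L}$ is invertible. For the general case, replace $V$ by $U:=\operatorname{span}\mathcal{S}$. Since $\mathcal{L}(U)=\operatorname{span}\mathcal{L}(\mathcal{S})=U$, the restriction $\mathcal{L}|_U$ is a surjective, hence bijective, endomorphism of $U$. Because $U$ is a closed subspace containing $\mathcal{S}$, the closure $\overline{\mathcal{S}}$ lies in $U$. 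So it suffices to work with the invertible map $\mathcal{L}|_U$ on $U$.

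With $\mathcal{L}|_U$ invertible, the inverse $\mathcal{M}:=(\mathcal{L}|_U)^{-1}$ is continuous and satisfies $\mathcal{M}(\mathcal{S})=\mathcal{S}$. Applying the first inclusion to $\mathcal{M}$ gives $\mathcal{M}(\overline{\mathcal{S}})\subseteq\overline{\mathcal{S}}$. Applying $\mathcal{L}$ to both sides yields $\overline{\mathcal{S}}\subseteq\mathcal{L}(\overline{\mathcal{S}})$. Combined with the first inclusion, $\mathcal{L}(\overline{\mathcal{S}})=\overline{\mathcal{S}}$, so $\mathcal{L}\in P(\overline{\mathcal{S}})$.
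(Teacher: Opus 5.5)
Your proof is correct. Continuity of $\mathcal{L}$ gives $\mathcal{L}(\overline{\mathcal{S}})\subseteq\overline{\mathcal{S}}$. For the reverse inclusion, you restrict to the closed subspace $\operatorname{span}\mathcal{S}$: there $\mathcal{L}$ is bijective without any assumption on $\mathcal{S}$, and the same continuity argument applies to the inverse. That restriction is exactly what is needed to avoid assuming that $\mathcal{S}$ contains a basis.

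The survey gives no proof of its own here; it only cites \cite[Lemma 1]{BHJ85}. So there is nothing different to compare against, and your argument is the standard one.

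The paragraph with the decomposition $V=\ker\mathcal{L}\oplus W$ is an abandoned detour. It can be deleted without affecting the argument.
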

	
	We next turn our attention to several positivity classes of matrices and their linear preservers.
	
	\subsection{Positive semidefinite matrices}
	
	Let $A\in\mathbb{C}^{n\times n}$ be such that $A=A^*$. Then $A$ is called \textit{positive definite} (PD) if $\mathbf{x}^{*}A\mathbf{x}>0$ for all $\mathbf{0}\neq\mathbf{x}\in\mathbb{C}^n$. If $\mathbf{x}^{*}A\mathbf{x}\geq0$ for all $\mathbf{x}\in\mathbb{C}^n$, then $A$ is said to be \textit{positive semidefinite} (PSD). The set of PSD matrices forms a convex cone, as it is closed under addition and under multiplication by positive scalars. However, the usual matrix product of two PSD/PD matrices need not be PSD/PD (or even symmetric), whereas their entrywise (or Hadamard) product is always PSD/PD (see~\cite[Theorem 7.5.3]{Johnson_Horn12}). The classes of positive semidefinite and postive definite matrices have applications in different areas such as optimization and mathematical programming, signal processing, quantum information theory, statistics, and in all downstream applications via correlation and covariance matrices. Here are a few examples of these matrices.
	\begin{example}
		\begin{itemize}
			\item[(1)] For every $A\in\mathbb{C}^{m\times n}$, the matrices $AA^*$ and $A^*A$ are positive semidefinite. Moreover, if $\rk A = m$, then $AA^*$ is positive definite, and if $\rk A = n$, then $A^*A$ is positive definite.
			
			
			\item[(2)] Let $(V,\langle \cdot,\cdot\rangle)$ be an inner product space and let $\mathbf{v}^{(1)},\ldots,\mathbf{v}^{(n)}\in V$. Then the matrix
			\begin{align*}
				G = [\langle \mathbf{v}^{(j)},\mathbf{v}^{(i)}\rangle]_{i,j=1}^n\in\mathbb{C}^{n\times n},
			\end{align*}
			known as the \textit{Gram matrix}, is positive semidefinite. The matrix $G$ is positive definite if and only if $\mathbf{v}^{(1)},\ldots,\mathbf{v}^{(n)}$ are linearly independent.
			
			\item[(3)] Let $A=(a_{ij})\in\mathbb{C}^{n\times n}$ be Hermitian. If $A$ is diagonally dominant with nonnegative diagonal entries, i.e.
			\begin{align*}
				a_{ii} \geq \displaystyle\sum_{\substack{j=1 \\ j\neq i}}^{n} |a_{ij}|, \quad i=1,\ldots,n,
			\end{align*}
			then $A$ is positive semidefinite. 
%
			If $A$ is strictly diagonally dominant with positive diagonal entries, then $A$ is positive definite.
			
			\item[(4)] The $n\times n$ Cauchy matrix $A=(a_{ij}) = \left(\frac{1}{i+j}\right)_{i,j=1}^n$ is positive definite.
		\end{itemize}
	\end{example}
	Positive semidefinite matrices admit several equivalent characterizations; some of which are summarized in the result below.
	\begin{theorem}\cite[Chapter 7]{Johnson_Horn12}
		For a Hermitian matrix $A\in\mathbb{C}^{n\times n}$, the following statements are equivalent.
		\begin{itemize}
			\item[(1)] $A$ is positive semidefinite.
			
			\item[(2)] All eigenvalues of $A$ are nonnegative.
			
			\item[(3)] All principal minors of $A$ are nonnegative.
			
			\item[(4)] There exists a Hermitian matrix $C\in\mathbb{C}^{n\times n}$ such that $A=C^2$.
			
			\item[(5)] There exists a matrix $B\in\mathbb{C}^{n\times k}$  for some $k \geq 1$ such that $A=BB^*$.
			
			\item[(6)] There exists a lower triangular matrix $L\in\mathbb{C}^{n\times n}$ such that $A=LL^*$.
			
			\item[(7)] There exists a Euclidean vector space $V$ of dimension $k$ and vectors $\mathbf{v}^{(1)},\ldots,\mathbf{v}^{(n)}\in V$ such that $A = [\langle \mathbf{v}^{(i)},\mathbf{v}^{(j)}\rangle]_{i,j=1}^n$.
			
			\item[(8)] There exists a sequence of positive definite matrices $\{A_k\}_{k=1}^{\infty}$ such that $\displaystyle\lim_{k\to\infty}A_k = A$.
		\end{itemize}
	\end{theorem}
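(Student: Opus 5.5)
The natural route is to put the spectral theorem at the centre and prove a cycle of implications, with side loops for the remaining items. Write $A=UDU^*$, where $U$ is unitary and $D=\diag(\lambda_1,\ldots,\lambda_n)$ is real.

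The main cycle runs $(1)\Rightarrow(2)\Rightarrow(4)\Rightarrow(5)\Rightarrow(1)$.
\begin{itemize}
\item[$(1)\Rightarrow(2)$:] If $A\mathbf{x}=\lambda\mathbf{x}$ with $\mathbf{x}\neq\mathbf{0}$, then $\lambda\|\mathbf{x}\|^2=\mathbf{x}^*A\mathbf{x}\geq0$, so $\lambda\geq0$.
\item[$(2)\Rightarrow(4)$:] Take $C=UD^{1/2}U^*$, which is Hermitian and satisfies $C^2=A$.
\item[$(4)\Rightarrow(5)$:] Take $B=C$ with $k=n$; then $BB^*=C^2=A$.
\item[$(5)\Rightarrow(1)$:] We have $\mathbf{x}^*BB^*\mathbf{x}=\|B^*\mathbf{x}\|^2\geq0$.
\end{itemize}

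For (6), note that $(6)\Rightarrow(5)$ is trivial. For $(5)\Rightarrow(6)$, write $A=BB^*$ and take an RQ-type factorization $B^*=QR$ after padding, or equivalently $B=LQ^*$ with $L$ lower triangular and $Q$ having orthonormal columns. If $k<n$, pad $B$ with zero columns first so that $B$ is $n\times n$. Then $A=LQ^*QL^*=LL^*$.

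For (7):
\begin{itemize}
\item[$(5)\Rightarrow(7)$:] Let $\mathbf{v}^{(i)}$ be the $i$th row of $B$, viewed in $\mathbb{C}^k$ with the standard inner product. The Gram matrix then matches $A$, up to the conjugation convention of the inner product; with the convention $\langle \mathbf{u},\mathbf{w}\rangle=\mathbf{w}^*\mathbf{u}$ one uses the conjugated rows instead.
\item[$(7)\Rightarrow(1)$:] $\mathbf{x}^*A\mathbf{x}=\bigl\|\sum_i x_i\mathbf{v}^{(i)}\bigr\|^2$, with the appropriate conjugation, and this is $\geq0$.
\end{itemize}

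For (8):
\begin{itemize}
\item[$(1)\Rightarrow(8)$:] Set $A_k=A+\tfrac1k I$. Then $\mathbf{x}^*A_k\mathbf{x}\geq\tfrac1k\|\mathbf{x}\|^2>0$ for $\mathbf{x}\neq\mathbf{0}$.
\item[$(8)\Rightarrow(1)$:] The quantity $\mathbf{x}^*A_k\mathbf{x}>0$ passes to the limit and gives $\mathbf{x}^*A\mathbf{x}\geq0$.
\end{itemize}

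The step needing the most care is (3). The direction $(1)\Rightarrow(3)$ is straightforward: each principal submatrix $A[\alpha]$ is PSD, since $\mathbf{y}^*A[\alpha]\mathbf{y}=\mathbf{x}^*A\mathbf{x}$ with $\mathbf{x}$ the zero-extension of $\mathbf{y}$. Its determinant is the product of its eigenvalues, which are nonnegative by $(1)\Rightarrow(2)$ applied to $A[\alpha]$.

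For $(3)\Rightarrow(2)$ I would use the characteristic polynomial $\det(tI+A)=\sum_{j=0}^n E_j(A)t^{n-j}$, where $E_j(A)$ is the sum of all $j\times j$ principal minors. This expansion is the main technical point, and it follows by multilinearity of the determinant in columns. By (3) every coefficient is nonnegative and $E_0=1$, so $\det(tI+A)>0$ for all $t>0$. Hence $A$ has no eigenvalue $-t<0$. Since $A$ is Hermitian its eigenvalues are real, and therefore they are all nonnegative.

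It is worth noting that merely having nonnegative leading principal minors would not suffice, for example $\diag(0,-1)$. This is why the statement uses all principal minors and why this coefficient argument is the essential step.
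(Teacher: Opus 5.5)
Your proposal is correct and is essentially the standard argument from Horn--Johnson, Chapter 7, which the paper cites without giving a proof of its own: the spectral square root, a QR factorization for the triangular factor, and the expansion of $\det(tI+A)$ in sums of principal minors for $(3)\Rightarrow(2)$. The only slip is in $(5)\Rightarrow(7)$, where the conjugation is reversed: with $\langle \mathbf{u},\mathbf{w}\rangle=\mathbf{w}^*\mathbf{u}$ the plain rows of $B$ already give $\langle\mathbf{v}^{(i)},\mathbf{v}^{(j)}\rangle=\sum_l B_{il}\overline{B_{jl}}=(BB^*)_{ij}$, and it is the convention $\langle \mathbf{u},\mathbf{w}\rangle=\mathbf{u}^*\mathbf{w}$ that needs the conjugated rows (also, $V$ must be a complex inner product space rather than a real Euclidean one whenever $A$ has non-real entries).
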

	
	The classification of linear transformations preserving positive semidefiniteness constitutes one of the earliest results in preservers literature for matrix positivity classes. The first complete result in this direction dates back to 1965, when Schneider~\cite{Schneider65} obtained a characterization of onto linear preservers of the cone of positive semidefinite matrices. We state this classical theorem below.
	
	\begin{theorem}\cite[Theorem 2]{Schneider65}\label{Theorem_PSD}
		Let $\mathcal{H}_n$ denote the real vector space of $n\times n$ Hermitian matrices. Suppose $\mathcal{L}:\mathcal{H}_n\to\mathcal{H}_n$ is a linear transformation such that $\mathcal{L}(PSD)=PSD$. Then
		\begin{align*}
			\mathcal{L}(A)=RAR^* \quad \text{or} \quad \mathcal{L}(A)=RA^TR^*,
		\end{align*}
		for some invertible matrix $R\in\mathbb{C}^{n\times n}$.
	\end{theorem}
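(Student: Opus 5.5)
The plan is to exploit the fact that an onto preserver of the PSD cone is a linear automorphism of that cone, so it permutes its extreme rays. Since PSD contains a basis of $\mathcal{H}_n$ (for instance the matrices $E_{ii}$, $(e_i+e_j)(e_i+e_j)^*$ and $(e_i+\mathrm{i}e_j)(e_i+\mathrm{i}e_j)^*$), $\mathcal{L}$ is invertible and $\mathcal{L}^{-1}$ also maps PSD onto PSD. The extreme rays of the PSD cone are exactly the rays spanned by the rank-one matrices $xx^*$ with $x\neq 0$. An invertible linear map preserving a cone maps extreme rays onto extreme rays. Hence $\mathcal{L}$ maps rank-one PSD matrices onto rank-one PSD matrices, and it maps faces onto faces. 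The faces of PSD are the sets $F_W=\{A\succeq 0:\operatorname{range}A\subseteq W\}$ for subspaces $W\subseteq\mathbb{C}^n$, and $\dim F_W$ is determined by $\dim W$. Therefore $\mathcal{L}$ induces a bijection $\Phi$ on the lattice of subspaces of $\mathbb{C}^n$ that preserves inclusion and dimension.

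The core step is to identify this lattice automorphism. By the fundamental theorem of projective geometry (valid for $n\ge 3$), $\Phi$ is induced by a semilinear bijection $T$ of $\mathbb{C}^n$, that is, $T(\lambda x)=\sigma(\lambda)Tx$ for some field automorphism $\sigma$ of $\mathbb{C}$. I would then use linearity and continuity of $\mathcal{L}$ to force $\sigma$ to be the identity or complex conjugation. The line $[x]$ is sent to $[Tx]$, which gives
\[
\mathcal{L}(xx^*)=c(x)\,(Tx)(Tx)^*, \qquad c(x)>0 .
\]
Because $\mathcal{L}$ is continuous on the set of rank-one matrices, the map $[x]\mapsto[Tx]$ is continuous on projective space. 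This forces $\sigma$ to be continuous, and the only continuous field automorphisms of $\mathbb{C}$ are the identity and conjugation. The conjugate case is where the alternative $A\mapsto RA^TR^*$ comes from, since $\overline{x}\,\overline{x}^{*}=(xx^*)^T$.

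Next I would show that the scalar $c(x)$ can be absorbed, so that $\mathcal{L}(xx^*)=(Rx)(Rx)^*$ or $(R\bar x)(R\bar x)^*$ for a single invertible $R$. Apply linearity to the identity
\[
(x+y)(x+y)^*+(x-y)(x-y)^* = 2xx^*+2yy^* .
\]
Comparing ranges and coefficients shows that $c$ is constant along lines and compatible across pairs. It follows that $\sqrt{c(x)}\,T$ is linear (or conjugate-linear) up to one global scalar, which can be absorbed into $R$. Since rank-one PSD matrices span $\mathcal{H}_n$, the formula extends by linearity to all of $\mathcal{H}_n$.

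The case $n=2$ needs separate treatment, because there the fundamental theorem gives nothing: lines are points of $\mathbb{CP}^1$ and there is no inclusion structure to exploit. In that case I would use that $\mathcal{H}_2\cong\mathbb{R}^{1,3}$, where the PSD cone is the forward light cone and the determinant is the Minkowski form. A linear map preserving the light cone is a positive multiple of an orthochronous Lorentz transformation. The group $SL_2(\mathbb{C})$ acting by $A\mapsto RAR^*$ covers the proper component, and the transpose supplies the improper component. The case $n=1$ is trivial.

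I expect the main obstacle to be the $n\ge 3$ step: invoking the fundamental theorem of projective geometry, and then ruling out wild automorphisms of $\mathbb{C}$ via continuity. The scalar-normalization step afterwards is routine but fiddly.
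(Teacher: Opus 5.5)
The paper does not prove this statement. It is quoted from Schneider~\cite{Schneider65} and only used, together with Lemma~\ref{lemma_preserver}, to obtain Corollary~\ref{Cor_PD}. So there is no in-paper argument to compare with. Judged on its own, your outline is a correct proof.

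The face-lattice step is sound. $\mathcal{L}$ is invertible and permutes faces, $F_W\subseteq F_V$ iff $W\subseteq V$, and $\dim\operatorname{span}F_W=(\dim W)^2$. So for $n\geq 3$ the fundamental theorem of projective geometry applies.

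In the normalization step, state precisely what you get. Since $T$ is additive and $Tx,Ty$ are independent whenever $x,y$ are, the four matrices $Tx(Tx)^*$, $Ty(Ty)^*$, $Tx(Ty)^*$, $Ty(Tx)^*$ are linearly independent. Your identity then gives $c(x+y)=c(x-y)=c(x)=c(y)$. Using an auxiliary independent vector for dependent pairs, $c\equiv c_0$ is constant. You then take $R=\sqrt{c_0}\,T$, or $R=\sqrt{c_0}\,T_0$ when $T=T_0\circ\kappa$ with $T_0$ linear and $\kappa$ entrywise conjugation; the latter is the $RA^TR^*$ case.

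This computation does not need $\sigma$ to be identified first. Comparing $\mathcal{L}((\lambda x)(\lambda x)^*)=|\lambda|^2\mathcal{L}(xx^*)$ with $c_0|\sigma(\lambda)|^2\,Tx(Tx)^*$ gives $|\sigma(\lambda)|=|\lambda|$. That already forces $\sigma$ to be the identity or complex conjugation, so your separate topological continuity argument can be dropped.

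For $n=2$ your Lorentz-cone argument is also correct, but all the work sits in classical facts you should state explicitly:
\begin{itemize}
\item The linear automorphisms of the forward light cone in $(\mathcal{H}_2,\det)$ are exactly $\mathbb{R}_{>0}\times O^+(1,3)$. This goes through the lemma that a quadratic form vanishing on the null cone of $\det$ is a scalar multiple of $\det$.
\item The maps $A\mapsto RAR^*$ with $R\in SL_2(\mathbb{C})$ cover $SO^+(1,3)$.
\item The map $A\mapsto A^T$ fixes $\operatorname{tr}A$ and $\det A$ but has determinant $-1$ on $\mathcal{H}_2$, so it represents the other component.
\end{itemize}

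A common alternative route stops at the extreme-ray step, so that $\mathcal{L}$ is an invertible map preserving rank-one PSD matrices, and then invokes a classification of such rank-one preservers. Your approach makes the geometry transparent. Cone automorphisms induce collineations of $\mathbb{CP}^{n-1}$, and the only admissible field automorphisms are the identity and conjugation, which is exactly why the two forms $RAR^*$ and $RA^TR^*$ occur. The price is importing the fundamental theorem of projective geometry and treating $n=2$ separately. The rank-one-preserver route stays within linear maps throughout, but moves the difficulty into that classification theorem.
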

	
	Observe that Theorem~\ref{Theorem_PSD} also applies to the set of positive definite matrices; we include a short proof for completeness.
	
	\begin{cor}\label{Cor_PD}
		All linear operators on the real vector space of $n\times n$ Hermitian matrices that map the set of positive definite matrices onto itself are of the form
		\begin{align}\label{Map_PD}
			A\mapsto RAR^* \quad \text{or} \quad A\mapsto RA^TR^*,
		\end{align}
		for some invertible matrix $R\in\mathbb{C}^{n\times n}$.
	\end{cor}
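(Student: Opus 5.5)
The plan is to reduce the statement to Theorem~\ref{Theorem_PSD} by passing to closures with Lemma~\ref{lemma_preserver}; the converse direction is a direct check.

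For the converse, take invertible $R\in\mathbb{C}^{n\times n}$ and a positive definite $A$. For every $\mathbf{x}\neq\mathbf{0}$ we have $\mathbf{x}^*RAR^*\mathbf{x}=(R^*\mathbf{x})^*A(R^*\mathbf{x})>0$, because $R^*\mathbf{x}\neq\mathbf{0}$. Also $A^T=\overline{A}$ is positive definite, since $\mathbf{x}^*\overline{A}\mathbf{x}=\overline{\mathbf{y}^*A\mathbf{y}}$ with $\mathbf{y}=\overline{\mathbf{x}}$. So both maps in~\eqref{Map_PD} send PD into PD. Their inverses, $A\mapsto R^{-1}A(R^{-1})^*$ and $A\mapsto (R^{-1}AR^{-*})^T$, have the same form, so each map sends PD onto PD.

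For the forward direction, let $\mathcal{L}$ be linear with $\mathcal{L}(PD)=PD$. Lemma~\ref{lemma_preserver}, applied with $V=\mathcal{H}_n$ and $\mathcal{S}=PD$, gives $\mathcal{L}(\overline{PD})=\overline{PD}$.

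The only step needing a separate argument is identifying this closure: $\overline{PD}=PSD$. If $A$ is PSD, then $A+\tfrac1k I$ is PD and tends to $A$; this is item (8) of the characterization theorem. Conversely, a limit of PD matrices satisfies $\mathbf{x}^*A\mathbf{x}\ge 0$ for each $\mathbf{x}$, by continuity, so it is PSD. Hence $\mathcal{L}(PSD)=PSD$.

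Theorem~\ref{Theorem_PSD} now applies and gives $\mathcal{L}(A)=RAR^*$ or $\mathcal{L}(A)=RA^TR^*$ for some invertible $R$, which is the conclusion.

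I expect no real obstacle. The two points to check are that the statement of Lemma~\ref{lemma_preserver} covers onto preservers, which it does, and the closure identification above, which is elementary.
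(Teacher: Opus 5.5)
Your proposal is correct and follows the paper's proof. Density of PD in PSD together with Lemma~\ref{lemma_preserver} upgrades $\mathcal{L}(PD)=PD$ to $\mathcal{L}(PSD)=PSD$, and Theorem~\ref{Theorem_PSD} then gives the form; you additionally write out the closure identification and the converse check, which the paper only asserts.
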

	\begin{proof}
		Clearly, both transformations in~\eqref{Map_PD} map the set of PD matrices onto itself. We now show the forward direction. Since the set of $n\times n$ positive definite matrices is dense in the set of $n\times n$ positive semidefinite matrices, it follows from Lemma~\ref{lemma_preserver} and Theorem~\ref{Theorem_PSD} that the onto linear preservers of positive definite matrices are among the maps in~\eqref{Map_PD}. This completes the proof.
	\end{proof}
	In contrast, the classification of into preservers of PSD matrices is considerably more subtle, and no complete description is currently known. Notice that while every $^*\mathrm{congruence}$ transformation maps the cone of positive semidefinite matrices into itself, there exist many additional into linear preservers of the PSD cone that are not onto preservers. For instance, the maps (i) $A\mapsto RAR^*$, for any given matrix $R\in\mathbb{C}^{n\times n}$, and (ii) $A\mapsto T\circ A$, for any fixed positive semidefinite matrix $T\in\mathbb{C}^{n\times n}$, are into preservers of the set of $n\times n$ PSD matrices.
	
	\subsection{Completely positive rank preservers}\label{SubSec_CP}
	
	A matrix $A\in\mathbb{R}^{n\times n}$ is called \textit{completely positive} (CP) if $A=BB^T$ for some matrix $B\in\mathbb{R}^{n\times k}_{\scriptscriptstyle\geq0}$. Writing $B$ in column form as $B:=[\mathbf{b}^{(1)},\ldots,\mathbf{b}^{(k)}]$, where each $\mathbf{b}^{(i)}\in\mathbb{R}^n_{\scriptscriptstyle\geq0}$, yields the factorization
	\begin{align}\label{CP_Factorization}
		A = \displaystyle\sum_{i=1}^{k}\mathbf{b}^{(i)}\mathbf{b}^{(i)^T}.
	\end{align}
	Note that these matrices form a special subclass of positive semidefinite matrices; specifically, they are entrywise nonnegative positive semidefinite matrices. Such matrices are called \textit{doubly nonnegative}. The study of CP matrices has its origins in inequality theory and quadratic forms~\cite{Diananda62,Hall_et_al_63}. For a detailed treatment of these matrices, we refer to the book by Shaked-Monderer--Berman~\cite{Monderer-Berman}.
	
	From the graph associated with a symmetric matrix, one can determine when a doubly nonnegative matrix is completely positive. This result is due to Berman--Grone~\cite{Berman_Grone}. To continue our discussion, we need the following definition.
	\begin{defn}
		For any $A=(a_{ij})\in\mathcal{S}_n$, the associated simple graph $G(A)$ has vertex and edge sets given by $	V(G(A))=\{1,\ldots,n\}$ and $E(G(A))=\{\{i,j\} \mid a_{ij}\neq0, ~ 1\leq i<j\leq n\}$, respectively.
			
	\end{defn}
	
	In 1988, Berman--Grone~\cite{Berman_Grone} obtained the following result.
	
	\begin{theorem}\cite[Theorem 3.1]{Berman_Grone}\label{CP_bipartite}
		If $A\in\mathcal{S}_n$ is doubly nonnegative and $G(A)$ is bipartite, then $A$ is completely positive.
	\end{theorem}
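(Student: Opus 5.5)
We prove Theorem~\ref{CP_bipartite} by induction on the number of edges of the bipartite graph $G(A)$, peeling off nonnegative rank-one terms of the form $\mathbf{b}\mathbf{b}^T$ until what remains is diagonal. We can assume $G(A)$ is connected with at least one edge. Indeed, after a simultaneous permutation of rows and columns, $A$ is a direct sum of blocks indexed by the connected components, and a direct sum of CP matrices is CP (pad each factor $B$ with zero rows). A single vertex gives a $1\times1$ block $a_{ii}\ge 0$, which is CP.

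\textbf{Reduction to a $\pm$-signed PSD matrix.} Let $V(G(A))=X\sqcup Y$ be a bipartition and let $S$ be the signature matrix that is $+1$ on $X$ and $-1$ on $Y$. Then $M:=SAS$ is PSD, has the same diagonal as $A$, and every off-diagonal entry of $M$ is $\le 0$, since every edge joins $X$ to $Y$. Our strategy is a ``bipartite Cholesky'' step: find a vertex where a rank-one piece can be split off so that the remainder stays doubly nonnegative, keeps a bipartite graph, and loses at least one edge.

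\textbf{Key step (main obstacle).} Since $G(A)$ is bipartite, it contains no triangle. Hence for any vertex $i$ the neighbourhood $N(i)$ is an independent set. We subtract
\[
A' = A - \frac{1}{a_{ii}}\,\mathbf{a}_i\mathbf{a}_i^T,
\]
where $\mathbf{a}_i$ is the $i$-th column of $A$ and $a_{ii}>0$. (If $a_{ii}=0$, positive semidefiniteness forces the whole $i$-th row to vanish, so $i$ is isolated.) This is the Schur complement step, so $A'$ is PSD and its $i$-th row and column vanish. For $j\ne k$ in $N(i)$, the new $(j,k)$ entry is
\[
a_{jk}-\frac{a_{ij}a_{ik}}{a_{ii}} = -\frac{a_{ij}a_{ik}}{a_{ii}} < 0,
\]
because $a_{jk}=0$ when $j,k$ are nonadjacent. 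So full elimination destroys nonnegativity.

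The cure is to remove less. We use the rank-one term $\mathbf{b}\mathbf{b}^T$ supported only on $\{i,j\}$ for a single edge $\{i,j\}$. Write $\mathbf{b}=s e_i+t e_j$ with $s,t\ge0$ and $st=a_{ij}$, and choose $s,t$ so that the remainder is still PSD and the $(i,j)$ entry becomes $0$. The graph then loses the edge $\{i,j\}$, stays bipartite, and stays nonnegative.

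\textbf{Feasibility of the split.} The hard part is showing that suitable $s,t$ exist. The remainder $A-\mathbf{b}\mathbf{b}^T$ is PSD iff $\mathbf{b}$ lies in the range of $A$ and $\mathbf{b}^TA^{\dagger}\mathbf{b}\le1$. This does not hold automatically for an arbitrary edge. We therefore use the following leaf-type argument. Conjugating by $S$, we ask whether a PSD matrix $M$ with nonpositive off-diagonal entries and bipartite graph (an M-matrix-type object) can be split into a sum of matrices supported on single edges,
\[
\begin{pmatrix} p & -a_{ij}\\ -a_{ij} & q\end{pmatrix}\succeq0,
\]
plus a nonnegative diagonal. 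For a PD M-matrix there is a positive vector $\mathbf{d}$ with $M\mathbf{d}\ge0$, and then $D M D$ with $D=\diag(\mathbf{d})$ is diagonally dominant. A diagonally dominant symmetric matrix with nonpositive off-diagonal entries is a sum of $2\times2$ edge matrices $a(e_j-e_k)(e_j-e_k)^T$ plus a nonnegative diagonal.

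Undoing $D$ and $S$ turns each edge term into $(\alpha e_j+\beta e_k)(\alpha e_j+\beta e_k)^T$ with $\alpha,\beta\ge0$, precisely because $j$ and $k$ lie on opposite sides of the bipartition. This gives the CP factorization directly. The PSD-but-singular case then follows from the PD case by closure: apply the argument to $A+\varepsilon I$ and let $\varepsilon\to0$, using that the CP cone is closed.
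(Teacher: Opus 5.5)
The survey only quotes this theorem from Berman--Grone and gives no proof, so there is no in-paper argument to compare your route with. Judged on its own, your proposal is correct. However, the actual proof is only its final paragraph. The announced induction on the number of edges, the Schur-complement computation, the criterion $\mathbf{b}^TA^{\dagger}\mathbf{b}\le 1$ and the ``leaf-type argument'' are never carried out or used; the final argument is neither inductive nor leaf-based. You should delete them and present the direct argument cleanly.

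Its core runs as follows. Bipartiteness gives $SAS=\mathcal{M}(A)$, a positive semidefinite Z-matrix. For $\varepsilon>0$, the matrix $M_\varepsilon:=S(A+\varepsilon I)S$ is a positive definite Z-matrix. Hence it has positive principal minors and is an M-matrix, so $M_\varepsilon^{-1}\ge 0$. The vector $\mathbf{d}:=M_\varepsilon^{-1}\mathbf{1}$ is then entrywise positive, since a nonsingular nonnegative matrix has no zero row. With $D=\diag(\mathbf{d})$ and $N:=DM_\varepsilon D$ one has $N\mathbf{1}=\mathbf{d}>0$. You should write out the identity
\[
N=\sum_{j<k}|n_{jk}|\,(e_j-e_k)(e_j-e_k)^T+\diag(N\mathbf{1})
\]
explicitly, since it is the heart of the proof. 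Conjugating by $SD^{-1}$ sends each term with $n_{jk}\neq 0$ to $\mathbf{b}\mathbf{b}^T$ with $\mathbf{b}=\sqrt{|n_{jk}|}\,(d_j^{-1}e_j+d_k^{-1}e_k)\ge 0$. This works exactly because such $j,k$ lie on opposite sides of the bipartition. The diagonal term goes to a nonnegative diagonal matrix.

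Two refinements are worth making.

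\emph{The limit step.} The passage $\varepsilon\to 0$ uses closedness of the CP cone. If you prefer not to cite it, note that the factors of $A+\varepsilon I$ produced above have at most $|E(G(A))|+n$ columns, with entries bounded by $\max_i(a_{ii}+\varepsilon)^{1/2}$, so a subsequence converges. Better, your reduction to connected components lets you avoid $\varepsilon$ altogether. Let $M_C$ be the principal submatrix of $\mathcal{M}(A)$ indexed by a component $C$ with at least two vertices. For large $s$, the matrix $sI-M_C$ is symmetric, nonnegative and irreducible. Its Perron vector $\mathbf{d}>0$ then satisfies $M_C\mathbf{d}=\lambda_{\min}(M_C)\mathbf{d}\ge 0$, which is all the scaling step needs.

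\emph{Where bipartiteness enters.} It is used only to get $\mathcal{M}(A)=SAS\succeq 0$. The rest shows that every symmetric nonnegative $A$ whose comparison matrix is an M$_0$-matrix is completely positive. Indeed, $DAD$ is then nonnegative and diagonally dominant, hence equal to $\sum_{j<k}(DAD)_{jk}(e_j+e_k)(e_j+e_k)^T$ plus a nonnegative diagonal. So the signature matrix is not even needed in the factorization step.
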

	
	In a similar vein, the cp-rank of a completely positive matrix $A$ can also be determined from its graph $G(A)$. The \textit{cp-rank} of a matrix $A\in\mathrm{CP}$ is defined as the least positive integer $k$ for which a factorization of the form~\eqref{CP_Factorization} exists. We denote the cp-rank of $A$ by $\cpr A$. In general, the cp-rank of a matrix may differ from its usual rank. Indeed, it follows immediately from the factorization~\eqref{CP_Factorization} that the rank of any completely positive matrix is bounded above by its cp-rank. Determining the cp-rank of a given completely positive matrix is, in general, not straightforward. However, if the graph associated with a completely positive matrix is triangle free, then its cp-rank can be determined immediately. The following result, due to Drew--Johnson--Loewy~\cite{Drew_et_al} in 1994, provides an explicit formula for the cp-rank in this setting.
	\begin{theorem}\cite[Theorems 5 and 6]{Drew_et_al}\label{Lemma_Drew}
		Let $G(A)$ be the connected graph corresponding to the $n\times n$ completely positive matrix $A$. If $G(A)$ is triangle free, then
		\begin{align*}
			\cpr A = \max\{n,|E(G(A))|\}.
		\end{align*}
	\end{theorem}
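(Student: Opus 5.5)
We need to prove the Drew--Johnson--Loewy result: if $A$ is completely positive with connected triangle-free graph $G(A)$, then $\cpr A=\max\{n,|E(G(A))|\}$. The plan has two parts: a lower bound on any nonnegative factorization, and an explicit factorization attaining it.

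\emph{Lower bound.} Write $A=\sum_{i=1}^k \mathbf{b}^{(i)}\mathbf{b}^{(i)^T}$ with $\mathbf{b}^{(i)}\geq 0$.
\begin{itemize}
\item Since the summands are entrywise nonnegative, no cancellation can occur. Hence the support of each $\mathbf{b}^{(i)}$ is a clique in $G(A)$.
\item Because $G(A)$ is triangle free, every support has size at most $2$. So each $\mathbf{b}^{(i)}$ is supported on a single vertex or on an edge.
\item Each edge $\{p,q\}$ has $a_{pq}>0$, so some vector must be supported exactly on $\{p,q\}$. Thus $k\ge |E|$.
\item For $k\ge n$: connectedness gives that every diagonal entry is positive (for $n\ge2$ each vertex lies on an edge, so $a_{pp}>0$). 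But that alone does not give $k\ge n$. Instead use $k\ge\operatorname{rank}A$. If $A$ is singular and $|E|<n$, then $G(A)$ is a tree and $|E|=n-1$, so I must exclude $\cpr A=n-1$.
\end{itemize}

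\emph{The tree case.} Suppose $G(A)$ is a tree and $A=\sum_{e\in E}\mathbf{b}^{(e)}\mathbf{b}^{(e)^T}$ with exactly $n-1$ vectors, one per edge, each supported on its edge. Pick a leaf $p$ with unique neighbour $q$. The $(p,p)$ and $(p,q)$ entries are determined by $\mathbf{b}^{(pq)}$ alone. Remove $p$ and subtract this rank-one term. The remaining matrix on $n-1$ vertices has graph a tree with $n-2$ edges, factored by $n-2$ edge vectors. Iterate down to a single vertex with no vectors left. Its diagonal entry must then be $0$, which contradicts the positivity of the diagonal entries of $A$. More precisely, count the $n$ diagonal entries via positive terms: each edge vector contributes to two of them. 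A cleaner way to phrase the contradiction is to note that the leaf stripping always terminates at a vertex whose remaining diagonal entry is zero. So for trees we get $k\ge n$, and hence $k\ge\max\{n,|E|\}$ in all cases. The case $n=1$ is trivial.

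\emph{Upper bound.} This is the main obstacle: exhibiting a factorization with $\max\{n,|E|\}$ vectors, each supported on an edge or a vertex. The quadratic form is $\mathbf{x}^TA\mathbf{x}=\sum_p a_{pp}x_p^2+2\sum_{pq\in E}a_{pq}x_px_q$. Assigning to each edge a term $(\alpha x_p+\beta x_q)^2$ with $\alpha\beta=a_{pq}$ requires distributing the diagonal mass $a_{pp}$ among the incident edges plus a leftover $d_p\ge0$. Feasibility of this splitting is not automatic from positive semidefiniteness alone. The first thing I would try is the known fact that for triangle-free graphs, completely positive coincides with the existence of such a splitting; equivalently, $A$ is a sum of doubly nonnegative matrices supported on edges and single vertices.

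Given such a decomposition, each edge block is a $2\times2$ PSD nonnegative matrix. Handle the possible excesses so that every block is singular, pushing any excess into the diagonal leftovers. Then each edge contributes one rank-one term, plus one term per vertex with $d_p>0$. This gives $|E|+\#\{p: d_p>0\}$ terms.

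It remains to reduce the vertex terms so the total is $\max\{n,|E|\}$.
\begin{itemize}
\item If $|E|\ge n$, the graph contains a cycle, and an even cycle since it is triangle free and bipartite-like locally. Rebalance along the cycle by perturbing the edge weights to absorb the vertex leftovers into edge blocks, yielding exactly $|E|$ vectors.
\item If $G(A)$ is a tree, absorb all but one leftover by moving mass toward a chosen root, yielding $n$ vectors.
\end{itemize}
I expect this absorption argument, especially on graphs with several cycles (where the count of equations versus free edge parameters must be checked), to be the delicate step.
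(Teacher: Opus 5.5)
There is a genuine gap, and it sits exactly where the quoted formula breaks down. Your support argument is correct and gives $k\ge\max\{\rk A,|E|\}$ (write $E=E(G(A))$). The tree paragraph, however, does not give $k\ge n$. In a factorization with exactly $n-1$ edge vectors, the vector on the edge at the current leaf $p$ is forced to be the $p$th column of the current matrix divided by the square root of its $(p,p)$ entry. So each stripping step is a Schur complement, and it lowers the neighbour's diagonal entry. What remains at the last vertex $r$ is therefore not $a_{rr}$ but $a_{rr}$ minus the squares of the $r$-components of the vectors already removed, i.e.\ the last pivot of this elimination. That number is $0$ exactly when $A$ is singular, which is automatic here since $\rk A\le n-1$. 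So there is no contradiction, and there cannot be one. The $2\times 2$ all-ones matrix $A=\mathbf{u}\mathbf{u}^T$, with $\mathbf{u}=(1,1)^T$, has a single edge as its connected, triangle-free graph, yet $\cpr A=1<2=\max\{n,|E|\}$. Likewise $(1,1,0)^T(1,1,0)+(0,1,1)^T(0,1,1)$ has a path on three vertices as its graph and cp-rank $2<3$. Run as a construction, your leaf stripping shows $\cpr A=\rk A$ whenever $G(A)$ is a tree. The correct statement is therefore $\cpr A=\max\{\rk A,|E|\}$, which agrees with $\max\{n,|E|\}$ only when $A$ is nonsingular or $G(A)$ is not a tree.

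For $G(A)$ not a tree, the bound $\cpr A\le|E|$ is the real content of the theorem, and your sketch does not establish it. The ``known fact'' you invoke is not needed: grouping the vectors of any CP factorization by support already gives the edge/vertex splitting. The absorption step, though, is unproved and partly wrong. A triangle-free graph with $|E|\ge n$ need not contain an even cycle (take $C_5$). Counting equations against parameters also does not produce a \emph{positive} solution of $\sum_{q\sim p}s_{pq}=a_{pp}$, $s_{pq}s_{qp}=a_{pq}^2$. The missing input is that the comparison matrix $\mathcal{M}(A)$ is positive semidefinite. This follows from your support observation by flipping the sign of one coordinate of every edge-supported vector. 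One way to conclude is as follows. Fix an edge $\{p_0,p_1\}$ on a cycle, and let $B_t$ be the off-diagonal part of $A$ with its $(p_0,p_1)$ entry multiplied by $e^{t}$ and its $(p_1,p_0)$ entry by $e^{-t}$. Let $\lambda(t)$ be the smallest real eigenvalue of the irreducible Z-matrix $M_t=\diag(a_{11},\ldots,a_{nn})-B_t$. Then $\lambda(0)\ge0$ since $M_0=\mathcal{M}(A)$. Also $\lambda(t)\le\max_p a_{pp}-\rho(B_t)\to-\infty$, because the product of the entries of $B_t$ along the cycle (traversed via $p_0\to p_1$) is a constant times $e^{t}$. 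By continuity, $\lambda(t^*)=0$ for some $t^*$, with a positive eigenvector $\phi$. Setting $s_{pq}:=(B_{t^*})_{pq}\phi_q/\phi_p$ satisfies both constraints. Hence $A=\sum_{\{p,q\}\in E}\mathbf{b}\mathbf{b}^T$ with $\mathbf{b}=\sqrt{s_{pq}}\,\mathbf{e}_p+\sqrt{s_{qp}}\,\mathbf{e}_q$, which uses exactly $|E|$ vectors.
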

	
	We now proceed to discuss the problem of classifying linear maps that preserve the cp-rank of completely positive matrices. A linear transformation $\mathcal{L}:\mathcal{S}_m\to\mathcal{S}_n$ is called a \textit{cp-rank preserver} if, for every completely positive matrix $A\in\mathcal{S}_m$, the image $\mathcal{L}(A)\in\mathcal{S}_n$ is also completely positive and has the same cp-rank as $A$. Such linear maps not only preserve completely positive matrices but also preserve their cp-rank. In~\cite{Beasley_Song_et_al_16}, the authors claimed that if $m\leq n$, then a linear map $\mathcal{L}:\mathcal{S}_m\to\mathcal{S}_n$ is a cp-rank preserver if and only if
	\begin{align*}
		\mathcal{L}(A)=S^TAS,
	\end{align*}
	for some matrix $S\in\mathbb{R}_{\scriptscriptstyle\geq0}^{m\times n}$ of full row rank. This statement was later shown to be false in general by Shaked-Monderer~\cite{Monderer22}. She first proved that the statement is indeed valid for the cases $m=2,3$.
	
	\begin{theorem}\cite[Theorem 3.5]{Monderer22}\label{Monderer}
		Let $n\geq m$, where $m=2,3$. A linear transformation $\mathcal{L}:\mathcal{S}_m \to\mathcal{S}_n$ is a cp-rank preserver if and only if
		\begin{align*}
			\mathcal{L}(A)=S^TAS,
		\end{align*}
		for some $S\in\mathbb{R}^{m\times n}_{\scriptscriptstyle\geq0}$ of full row rank.
	\end{theorem}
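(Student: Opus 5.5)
Write $S\in\mathbb{R}^{m\times n}$ for the matrix whose $i$th row is $\mathbf{s}^{(i)T}$. The vectors $\mathbf{s}^{(i)}$ are defined in the necessity part below. The proof has two directions. For both I will use the classical fact (see~\cite{Monderer-Berman}) that a CP matrix of order at most $4$ has cp-rank equal to its rank. I also use the consequence that any CP matrix of rank at most $2$ has cp-rank equal to its rank.

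\emph{Sufficiency.} Let $A=BB^T$ with $B\geq0$ and $k=\cpr A$ columns. Then $S^TAS=(S^TB)(S^TB)^T$ with $S^TB\geq0$, so $\cpr(S^TAS)\leq \cpr A$. Since $S$ has full row rank, $S^T$ is injective, so $\rk(S^TAS)=\rk A$. Because $m\leq 3$, $\cpr A=\rk A$. Hence
\begin{align*}
\cpr A=\rk A=\rk(S^TAS)\leq\cpr(S^TAS)\leq\cpr A,
\end{align*}
which gives equality.

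\emph{Necessity, step 1: the diagonal images.} For $\mathbf{x}\geq0$, $\mathbf{x}\neq0$, the matrix $\mathbf{x}\mathbf{x}^T$ has cp-rank $1$. So $\mathcal{L}(\mathbf{x}\mathbf{x}^T)=\mathbf{y}\mathbf{y}^T$ for some nonzero $\mathbf{y}\geq0$. In particular $\mathcal{L}(E_{ii})=\mathbf{s}^{(i)}\mathbf{s}^{(i)T}$ with $\mathbf{s}^{(i)}\in\mathbb{R}^n_{\scriptscriptstyle\geq0}$.

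\emph{Step 2: $S$ has full row rank.} Suppose not. For $m=2$ this means $\mathbf{s}^{(1)}\parallel\mathbf{s}^{(2)}$, so $\mathcal{L}(I)$ has rank $1$ and hence cp-rank $1$, whereas $\cpr I=2$. For $m=3$ the span of the $\mathbf{s}^{(i)}$ has dimension at most $2$. Then $\mathcal{L}(I)=\sum_i\mathbf{s}^{(i)}\mathbf{s}^{(i)T}$ is CP of rank at most $2$, so its cp-rank is at most $2<3=\cpr I$, a contradiction.

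\emph{Step 3: the off-diagonal images.} Fix $i\neq j$ and set $M=\mathcal{L}(E_{ij}+E_{ji})$. For every $t>0$,
\begin{align*}
\mathcal{L}\big((\mathbf{e}_i+t\mathbf{e}_j)(\mathbf{e}_i+t\mathbf{e}_j)^T\big)=\mathbf{s}^{(i)}\mathbf{s}^{(i)T}+tM+t^2\mathbf{s}^{(j)}\mathbf{s}^{(j)T}
\end{align*}
must equal $\mathbf{y}(t)\mathbf{y}(t)^T$ with $\mathbf{y}(t)\geq0$. Since $\mathbf{s}^{(i)},\mathbf{s}^{(j)}$ are linearly independent, I will work in a basis extending them. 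Requiring rank at most $1$ for all $t$ forces every $2\times2$ minor to vanish identically as a polynomial in $t$. Comparing coefficients should give that $M$ is supported on $\mathrm{span}\{\mathbf{s}^{(i)},\mathbf{s}^{(j)}\}$, and that $M=\varepsilon(\mathbf{s}^{(i)}\mathbf{s}^{(j)T}+\mathbf{s}^{(j)}\mathbf{s}^{(i)T})$ with $\varepsilon=\pm1$. This makes the image $(\mathbf{s}^{(i)}+\varepsilon t\mathbf{s}^{(j)})(\cdots)^T$.

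To exclude $\varepsilon=-1$, use that $\mathbf{s}^{(i)},\mathbf{s}^{(j)}$ are independent and nonnegative. Then some coordinate ratio gives a $t>0$ for which $\mathbf{s}^{(i)}-t\mathbf{s}^{(j)}$ has entries of both signs. Its outer product then has a negative entry, contradicting complete positivity.

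\emph{Step 4: conclusion.} By linearity, $\mathcal{L}(A)=\sum_{i,j}a_{ij}\mathbf{s}^{(i)}\mathbf{s}^{(j)T}=S^TAS$.

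The main obstacle is the coefficient comparison in Step 3. I would carry it out concretely by writing $M$ in the basis adapted to $\mathbf{s}^{(i)},\mathbf{s}^{(j)}$ and using the minors involving the $(1,1)$ and $(2,2)$ entries.
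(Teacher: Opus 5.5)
Your argument is correct: for sufficiency, congruence by a nonnegative full-row-rank $S$ cannot raise cp-rank and preserves rank; for necessity, cp-rank-one matrices must go to cp-rank-one matrices, $\mathcal{L}(I)$ rules out dependent $\mathbf{s}^{(i)}$, and the family $(\mathbf{e}_i+t\mathbf{e}_j)(\mathbf{e}_i+t\mathbf{e}_j)^T$ pins down the off-diagonal images. The survey only cites \cite{Monderer22} for this theorem, so there is no in-paper proof to match against.

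The ``classical fact'' you open with is false in order $4$, and it fails exactly where it matters. The circulant $A$ in the example following Theorem~\ref{Monderer} is a $4\times 4$ CP matrix with $\rk A=3$ (rows $1+3$ and rows $2+4$ have the same sum), yet $\cpr A=4$ by Theorem~\ref{Lemma_Drew}. If your statement were true, your sufficiency argument would go through verbatim for $m=4$ and contradict that example. This gap between rank and cp-rank in order $4$ is precisely why the theorem stops at $m=3$.

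What is true, and all you actually use, is the following. First, $\cpr A=\rk A$ whenever $\rk A\le 2$, in every order. Second, $\cpr A\le n$ for CP matrices of order $n\le 4$. Together these give $\cpr A=\rk A$ in order at most $3$. The rank-at-most-$2$ fact is not a ``consequence'' of an order-restricted statement, because in Step~2 you apply it to $\mathcal{L}(I)$, which has order $n$. Cite it as the separate theorem it is.

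Step~3 is not a real obstacle. Take an invertible $P$ with $P\mathbf{s}^{(i)}=\mathbf{e}_1$ and $P\mathbf{s}^{(j)}=\mathbf{e}_2$, and write $PMP^T=(m_{kl})$.
\begin{itemize}
\item The principal minor on $\{1,2\}$ equals $t\,m_{22}+t^2(1+m_{11}m_{22}-m_{12}^2)+t^3m_{11}$. This forces $m_{11}=m_{22}=0$ and $m_{12}=\pm1$.
\item For $k\ge 3$, the principal minor on $\{1,k\}$ becomes $t\,m_{kk}-t^2m_{1k}^2$, giving $m_{kk}=m_{1k}=0$.
\item Next, the principal minor on $\{2,k\}$ becomes $-t^2m_{2k}^2$, giving $m_{2k}=0$.
\item Finally, the minor with rows $\{1,k\}$ and columns $\{1,l\}$ becomes $t\,m_{kl}$, giving $m_{kl}=0$.
\end{itemize}

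One small point in the sufficiency part: $\rk(S^TAS)=\rk A$ needs both that $S^T$ is injective and that $S$ maps $\mathbb{R}^n$ onto $\mathbb{R}^m$. Both follow from full row rank, but the second should be stated.
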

	
	She further presented counterexamples showing that the characterization given in Theorem~\ref{Monderer} fails for $m\geq4$.
	\begin{example}\cite[Example 3.6]{Monderer22}
		Consider the matrices
		\begin{align*}
			S = \begin{bmatrix}
				1 & 0 & 0 & 0 \\
				1 & 1 & 0 & 0 \\
				1 & 1 & 1 & 0 \\
				1 & 1 & 1 & 1
			\end{bmatrix} \quad \text{and} \quad 	A = \begin{bmatrix}
				2 & 1 & 0 & 1 \\
				1 & 2 & 1 & 0 \\
				0 & 1 & 2 & 1 \\
				1 & 0 & 1 & 2
			\end{bmatrix}.
		\end{align*}
		Clearly, $S$ is nonnegative and nonsingular. One can also verify that $A$ is doubly nonnegative. Since $G(A)$ is bipartite, Theorem~\ref{CP_bipartite} implies that $A$ is completely positive. By Theorem~\ref{Lemma_Drew}, $\cpr A =4$. A direct computation gives
		\begin{align*}
			S^TAS = \begin{bmatrix}
				16 & 12 & 8 & 4 \\
				12 & 10 & 7 & 3 \\
				8 & 7 & 6 & 3 \\
				4 & 3 & 3 & 2
			\end{bmatrix} = \begin{bmatrix}
				4 & 0 & 0 \\
				3 & 1 & 0 \\
				2 & 1 & 1 \\
				1 & 0 & 1
			\end{bmatrix}\begin{bmatrix}
				4 & 0 & 0 \\
				3 & 1 & 0 \\
				2 & 1 & 1 \\
				1 & 0 & 1
			\end{bmatrix}^T.
		\end{align*}
		This factorization shows that $\cpr(S^TAS)\leq3$, but $\cpr A=4$. Thus, $S$ does not preserve cp-rank.
	\end{example}
	Shaked-Monderer conjectured that when $m=n\geq4$, a linear operator on $\mathcal{S}_n$ preserves cp-rank if and only if it is a monomial congruence transformation. She settled this conjecture for $4\leq n\leq6$.
	
	\begin{theorem}\cite[Theorem 4.5]{Monderer22}\label{Monderer_4n6}
		For $4\leq n\leq 6$, a linear map $\mathcal{L}:\mathcal{S}_n\to\mathcal{S}_n$ is a cp-rank preserver if and only if
		\begin{align*}
			\mathcal{L}(A) = Q^TAQ,
		\end{align*}
		where $Q\in\mathbb{R}^{n\times n}_{\scriptscriptstyle\geq0}$ is a monomial matrix.
	\end{theorem}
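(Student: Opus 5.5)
The sufficiency direction is immediate. If $Q=PD$ with $P$ a permutation matrix and $D\in\mathcal{D}_{\scriptscriptstyle>0}(n)$, then $A=BB^T$ with $B\geq0$ gives $Q^TAQ=(Q^TB)(Q^TB)^T$, and $Q^TB\geq0$ has the same number of columns. Hence $\cpr(Q^TAQ)\leq\cpr A$. Applying the same argument to $Q^{-1}=D^{-1}P^T$, which is again a nonnegative monomial matrix, gives the reverse inequality, so $\mathcal{L}$ is a cp-rank preserver. The real work is the converse, which I would carry out in three steps.

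\textbf{Step 1 (rank-one matrices).} The cp-rank one matrices are exactly the matrices $\mathbf{x}\mathbf{x}^T$ with $\mathbf{0}\neq\mathbf{x}\geq0$. So $\mathcal{L}$ maps the cone $\{\mathbf{x}\mathbf{x}^T:\mathbf{x}\geq0\}$ into itself, nonzero elements going to nonzero elements. In particular $\mathcal{L}(E_{ii})=\mathbf{u}^{(i)}\mathbf{u}^{(i)T}$ with $\mathbf{u}^{(i)}\geq0$. Since $(\mathbf{e}_i+t\mathbf{e}_j)(\mathbf{e}_i+t\mathbf{e}_j)^T$ is rank one for all $t\geq0$, its image $\mathbf{u}^{(i)}\mathbf{u}^{(i)T}+t\,\mathcal{L}(E_{ij}+E_{ji})+t^2\mathbf{u}^{(j)}\mathbf{u}^{(j)T}$ must be rank one for all $t\geq0$. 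Comparing coefficients of the $2\times2$ minors in $t$ forces $\mathcal{L}(E_{ij}+E_{ji})=\mathbf{u}^{(i)}\mathbf{u}^{(j)T}+\mathbf{u}^{(j)}\mathbf{u}^{(i)T}$, unless the $\mathbf{u}$'s are degenerate (parallel). From this I expect to obtain $\mathcal{L}(A)=S^TAS$ on all of $\mathcal{S}_n$, where $S$ is the nonnegative matrix with rows $\mathbf{u}^{(i)T}$; this is Shaked-Monderer's reduction, of the same type as in Theorem~\ref{Monderer}. The diagonal matrix $I$ has cp-rank $n$, so $S^TS$ must have cp-rank $n$, hence rank $n$, and therefore $S$ is nonsingular. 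The degenerate parallel case should likewise be ruled out by rank considerations.

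\textbf{Step 2 (forcing $S$ to be monomial).} This is the main obstacle. One must show that a nonsingular nonnegative $S$ that is not monomial decreases the cp-rank of some $A$, generalising the example above. My plan is as follows. If $S$ is not monomial, then after a permutation and positive scaling (both harmless by the sufficiency part) some row of $S$ has two positive entries, in columns $p,q$. I would then construct a test matrix $A$ whose graph is triangle-free with many edges, for instance an even cycle $C_n$ with $n\geq4$, or a tree-plus-cycle bipartite graph. By Theorem~\ref{CP_bipartite} such an $A$ is CP, and by Theorem~\ref{Lemma_Drew} its cp-rank is $\max\{n,|E|\}$. The edges should be placed so that the extra positive entry of $S$ merges two of the rank-one summands $\mathbf{b}\mathbf{b}^T$ of $S^TAS$ into a product that can be refactored with fewer nonnegative columns, exactly as in the displayed $4\times4$ example. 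I would first try the cycle matrix $A=2I+(\text{adjacency of }C_n)$, embedded on coordinates chosen relative to $p,q$, and compute a factorization of $S^TAS$ with at most $n-1$ columns.

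\textbf{Step 3 (why $n\le6$).} For $n\leq6$, I would use the known bound that the cp-rank of any $n\times n$ CP matrix of rank $r$ is at most $\binom{r+1}{2}-1$ for small $n$, together with the exact cp-rank values for $n\leq5$, to certify that the refactorization is valid and that no case escapes. I expect the case analysis over the zero patterns of $S$ to be handled by reducing to a $4\times4$ principal block, where the example above applies, and then extending via a direct sum with a diagonal part. The point I am least sure of is whether that extension preserves the cp-rank drop, because adding diagonal blocks adds cp-rank additively. This additivity may be exactly what makes the argument work, but also what restricts it to $n\leq6$.
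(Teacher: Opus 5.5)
The survey gives no proof of this theorem; it only cites Shaked-Monderer, so your outline has to stand on its own.

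\textbf{What works, and a fixable error in Step 1.} Your sufficiency argument is fine. Step 1 is a reasonable route to $\mathcal{L}(A)=S^TAS$ with $S\geq0$. Note that the minors only give $\mathcal{L}(E_{ij}+E_{ji})=\pm(\mathbf{u}^{(i)}\mathbf{u}^{(j)T}+\mathbf{u}^{(j)}\mathbf{u}^{(i)T})$. The minus sign is excluded because, for independent $\mathbf{u}^{(i)},\mathbf{u}^{(j)}\geq0$, the vector $\mathbf{u}^{(i)}-t\mathbf{u}^{(j)}$ has entries of both signs for some $t>0$. However, your inference ``$S^TS$ has cp-rank $n$, hence rank $n$'' is false. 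Take the singular circulant $S_0$ whose rows are $(\mathbf{e}_1+\mathbf{e}_2)^T,(\mathbf{e}_2+\mathbf{e}_3)^T,(\mathbf{e}_3+\mathbf{e}_4)^T,(\mathbf{e}_4+\mathbf{e}_1)^T$. Then $S_0^TS_0$ is exactly the matrix $A$ of the survey's example, which has cp-rank $4$ but rank $3$. Nonsingularity needs a different argument. Suppose $S^T\mathbf{z}=\mathbf{0}$ with $\mathbf{z}=\mathbf{z}^+-\mathbf{z}^-$. Then $\mathcal{L}(\mathbf{z}^+\mathbf{z}^{+T}+\mathbf{z}^-\mathbf{z}^{-T})=2(S^T\mathbf{z}^+)(S^T\mathbf{z}^+)^T$ has cp-rank at most $1$, while the preimage has cp-rank $2$. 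If one of $\mathbf{z}^{\pm}$ is zero, then $\mathcal{L}(\mathbf{z}\mathbf{z}^T)=0$ instead, which is also a contradiction.

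\textbf{The genuine gap: Steps 2--3.} These steps carry the entire content of the theorem, and they are not proved.

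First, any test matrix must satisfy $\cpr A>\rk A$, because $\cpr(S^TAS)\geq\rk(S^TAS)=\rk A$ for nonsingular $S$. For odd $n$ your cycle matrix $2I+\mathrm{adj}(C_n)$ is positive definite, so it can never work.

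More seriously, no fixed test matrix can work. The set $\{M:\cpr M\leq k-1\}$ is closed, so for each CP matrix $A$, $\cpr(S^TAS)=\cpr A$ for all nonnegative $S$ sufficiently close to $I$. Here is a concrete instance. Take $S=I+\epsilon E_{12}$ with $0<\epsilon<1/3$, and $A=2I+\mathrm{adj}(C_4)$ for the cyclic order $1,3,2,4$. Then $M=S^TAS$ has $m_{33}=m_{44}=2$, $m_{34}=0$, $m_{13}=m_{14}=1$, $m_{23}=m_{24}=1+\epsilon$ and $m_{12}=2\epsilon$. In any factorization $M=CC^T$ with $C\in\mathbb{R}^{4\times3}_{\geq0}$, the rows $\mathbf{r}_3,\mathbf{r}_4$ have disjoint supports, so one of them equals $\sqrt2\,\mathbf{e}_k$. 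This forces $\mathbf{r}_1^T\mathbf{r}_2\geq(1+\epsilon)/2>2\epsilon$, a contradiction, so $\cpr M=4$. The other two labellings of the cycle fail in the same way. Hence the test matrix must be built from $S$: suitably normalized, it must tend to matrices of smaller cp-rank as $S\to I$. You give neither such a construction nor the shorter nonnegative factorization of $S^TAS$.

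Step 3 does not repair this, for four reasons.
\begin{itemize}
\item $S^T(A_1\oplus D)S$ is not a direct sum unless $S$ is block diagonal.
\item The survey's $4\times4$ example concerns one specific $S$ only.
\item Supporting $A$ on a principal $4\times4$ block turns the question into one about $A_1\mapsto S_1^TA_1S_1$ from $\mathcal{S}_4$ to $\mathcal{S}_n$, a setting the survey says is unclassified.
\item $\binom{r+1}{2}-1$ is an upper bound on cp-rank, so it cannot certify a drop below $\cpr A$.
\end{itemize}
Finally, nothing in your outline actually explains the restriction $n\leq6$.
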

	
	The conjecture was subsequently established in full generality by Shitov~\cite{Shitov23} in 2023.
	
	\begin{theorem}\cite{Shitov23}\label{Shitov_CP}
		Let $n\geq 6$ and let $\mathcal{L}:\mathcal{S}_n\to\mathcal{S}_n$ be a linear map. Then $\mathcal{L}$ is a cp-rank preserver if and only if it has the form
		\begin{align*}
			\mathcal{L}(A) = Q^TAQ,
		\end{align*}
		for some monomial matrix $Q\in\mathbb{R}^{n\times n}_{\scriptscriptstyle\geq0}$.
	\end{theorem}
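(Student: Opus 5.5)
The easy direction comes first. If $Q$ is a nonnegative monomial matrix, then $Q^TAQ$ is obtained from $A$ by a permutation and a positive diagonal scaling. Such a congruence maps $\mathrm{CP}$ onto $\mathrm{CP}$: a factorization $A=\sum_i \mathbf{b}^{(i)}\mathbf{b}^{(i)^T}$ becomes $Q^TAQ=\sum_i (Q^T\mathbf{b}^{(i)})(Q^T\mathbf{b}^{(i)})^T$ with $Q^T\mathbf{b}^{(i)}\ge 0$. The inverse map has the same form, so minimal factorizations correspond and $\cpr$ is preserved.

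For the converse, fix a cp-rank preserver $\mathcal{L}$. The first step is to pin down the images of the rank-one matrices $E_{ii}=\mathbf{e}_i\mathbf{e}_i^T$ and of $\mathbf{x}\mathbf{x}^T$ with $\mathbf{x}\ge0$. Each of these has cp-rank $1$, so each image is again $\mathbf{y}\mathbf{y}^T$ for some $\mathbf{y}\ge0$. Linearity along the segment $\mathbf{x}=s\mathbf{e}_i+t\mathbf{e}_j$ ($s,t\ge0$) gives
\[
\mathcal{L}\big((s\mathbf{e}_i+t\mathbf{e}_j)(s\mathbf{e}_i+t\mathbf{e}_j)^T\big)=s^2\mathbf{y}_i\mathbf{y}_i^T+t^2\mathbf{y}_j\mathbf{y}_j^T+st\,\mathcal{L}(E_{ij}+E_{ji}),
\]
and this must be rank one for all $s,t\ge0$. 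That forces $\mathcal{L}(E_{ij}+E_{ji})=\mathbf{y}_i\mathbf{y}_j^T+\mathbf{y}_j\mathbf{y}_i^T$. Applying the same argument to the nonnegative vectors $\sum_i x_i\mathbf{e}_i$ shows that $\mathcal{L}(A)=S^TAS$ on the span of the CP matrices, which is all of $\mathcal{S}_n$. Here $S$ is the nonnegative matrix with rows $\mathbf{y}_i^T$. Next, $\mathcal{L}$ preserves cp-rank $n$, so it preserves rank $n$ on, e.g., the identity, and therefore $S$ is nonsingular.

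The main step is to show that a nonsingular nonnegative $S$ with $S^TAS$ cp-rank preserving must be monomial. I would argue by contradiction, following Shaked-Monderer's example. Suppose some column of $S$ has two positive entries; after permutation and scaling, assume column $1$ has positive entries in rows $1$ and $2$. I would then build a CP matrix $A$ whose graph is triangle-free, for instance a cycle or a cycle plus pendant edges on $n\ge 6$ vertices. By Theorem~\ref{CP_bipartite} (choosing an even cycle and bipartite graphs) $A$ is CP, and by Theorem~\ref{Lemma_Drew} its cp-rank equals $\max\{n,|E|\}$. One can choose such graphs with $|E|>n$ (e.g. $K_{3,3}$-type bipartite graphs plus trees, which are triangle-free), so that $\cpr A$ is strictly larger than $n$. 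The aim is then to exhibit an explicit nonnegative factorization of $S^TAS$ with fewer factors. The off-diagonal mixing caused by the repeated support in column $1$ lets two rank-one terms $\mathbf{b}\mathbf{b}^T$ be merged, as happens in the $4\times4$ example.

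I expect the construction of this universal witness to be the hardest part: the witness $A$ must reduce cp-rank for \emph{every} non-monomial nonnegative $S$. My first attempt would be a reduction to the $4\times 4$ counterexample pattern. I would take $A=A_0\oplus D$, where $A_0$ is supported on four indices chosen from the support of the offending column and $D$ is a positive diagonal block. I would then verify, perhaps by perturbation (small $\varepsilon$ versions of $S$ restricted to the relevant block), that $\cpr(S^TAS)<\cpr A$. If that fails for general $S$, the fallback is Shitov's strategy: study the maximal cp-rank matrices and the extreme rays of the CP cone to force $S$ to map coordinate vectors to coordinate vectors.
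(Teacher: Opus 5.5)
There is a genuine gap. Before it, two repairs are needed in your reduction to $\mathcal{L}(A)=S^TAS$ with $S\geq0$, which is otherwise the right first step.

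First, the polarization step only forces $\mathcal{L}(E_{ij}+E_{ji})=\mathbf{y}_i\mathbf{y}_j^T+\mathbf{y}_j\mathbf{y}_i^T$ once you know $\mathbf{y}_i,\mathbf{y}_j$ are linearly independent. If $\mathbf{y}_j=c\mathbf{y}_i$, any $\mu\mathbf{y}_i\mathbf{y}_i^T$ with $\mu\geq-2c$ passes your rank-one test; independence follows from $\cpr(E_{ii}+E_{jj})=2$. You also still have to exclude the sign $-(\mathbf{y}_i\mathbf{y}_j^T+\mathbf{y}_j\mathbf{y}_i^T)$.

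Second, and more seriously, your nonsingularity argument is wrong: $\cpr(S^TS)=n$ does not give $\rk(S^TS)=n$. Take $S$ with rows $(\mathbf{e}_1+\mathbf{e}_2)^T,(\mathbf{e}_2+\mathbf{e}_3)^T,(\mathbf{e}_3+\mathbf{e}_4)^T,(\mathbf{e}_4+\mathbf{e}_1)^T,\mathbf{e}_5^T,\ldots,\mathbf{e}_n^T$. Then $S$ is singular, yet $S^TS$ is Shaked-Monderer's $4\times4$ cycle matrix (cp-rank $4$, rank $3$) direct sum $I_{n-4}$, which has cp-rank $n$. A correct argument is easy. If $S^T\mathbf{u}=\mathbf{0}$ with $\mathbf{u}\neq\mathbf{0}$, write $\mathbf{u}=\mathbf{u}^+-\mathbf{u}^-$; both parts are nonzero because the columns $\mathbf{y}_i$ of $S^T$ are nonzero and nonnegative. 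Then $\mathbf{u}^+(\mathbf{u}^+)^T+\mathbf{u}^-(\mathbf{u}^-)^T$ has cp-rank $2$, while its image $2\mathbf{z}\mathbf{z}^T$, with $\mathbf{z}=S^T\mathbf{u}^+=S^T\mathbf{u}^-$, has cp-rank at most $1$.

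The genuine gap is the main step, which is the entire content of Shitov's theorem (the paper only cites it and gives no proof). You never show that a nonsingular, nonnegative, non-monomial $S$ lowers the cp-rank of some CP matrix, and the routes you sketch do not work as stated.

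\emph{No universal witness, and no perturbative check.} The set $\{CC^T : C\in\mathbb{R}^{n\times k}_{\geq0}\}$ is closed, since $C\mapsto CC^T$ is proper because $\|C\|_F^2=\mathrm{tr}(CC^T)$. Hence for every fixed CP matrix $A$ one has $\cpr(S^TAS)\geq\cpr A$ for all nonnegative $S$ sufficiently close (depending on $A$) to a monomial matrix, e.g.\ $S=I+\varepsilon E_{12}$ with $\varepsilon$ small. So the witness has to be manufactured from $S$ itself, for instance after replacing $S$ by $Q_1SQ_2$ with $Q_1,Q_2$ nonnegative monomial, which only composes $\mathcal{L}$ with cp-rank preservers. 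You give no such construction.

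\emph{Triangle-free witnesses.} These give $\cpr A=\max\{n,|E|\}$, but Drew--Johnson--Loewy says nothing about $\cpr(S^TAS)$, whose graph typically contains triangles.

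\emph{The $A_0\oplus D$ plan.} It does not reduce to the $4\times4$ example. The offending column may have only two positive entries, so there may be no four indices in its support. Moreover $S^T(A_0\oplus D)S=S_J^TA_0S_J+\sum_{i\notin J}d_i\mathbf{y}_i\mathbf{y}_i^T$, with $S_J$ the $4\times n$ block of rows indexed by $J$. The first term is an instance of the $\mathcal{S}_4\to\mathcal{S}_n$ problem, which the paper records as open. There, overlapping supports alone do not break preservation: if $S_J$ contains a $4\times4$ monomial column submatrix, restricting any factorization of $S_J^TA_0S_J$ to the corresponding principal submatrix gives $\cpr(S_J^TA_0S_J)=\cpr A_0$. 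This happens for every $J\subseteq[n-1]$ when $S=\begin{bsmallmatrix} I_{n-1} & \mathbf{1}\\ \mathbf{0}^T & 1\end{bsmallmatrix}$. Any drop must then come from the interaction with the diagonal terms, which your plan does not analyze. Naming ``Shitov's strategy'' as a fallback does not supply the missing argument.
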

	
	Thus, when $n\geq m$ with $m\in\{2,3\}$, as well as when $m=n\geq4$, Theorem~\ref{Monderer} and Theorems~\ref{Monderer_4n6},~\ref{Shitov_CP} together provide a complete classification of all linear maps $\mathcal{L}:\mathcal{S}_m\to\mathcal{S}_n$ that preserve cp-rank. However, when $n>m\geq4$, the problem of characterizing all cp-rank preserving linear maps from $\mathcal{S}_m$ to $\mathcal{S}_n$ remains open.

	\subsection{Copositive matrices}
	
	A matrix $A\in\mathcal{S}_n$ is \textit{copositive} ($\mathrm{COP}_n$) if $\mathbf{x}^TA\mathbf{x}\geq0$ for all $\mathbf{x}\in\mathbb{R}^n_{\scriptscriptstyle\geq0}$. If $\mathbf{x}^TA\mathbf{x}>0$ for all $\mathbf{0}\neq\mathbf{x}\in\mathbb{R}^n_{\scriptscriptstyle\geq0}$, then $A$ is called \textit{strictly copositive} (SCOP$_n$). These classes of matrices were first defined by Motzkin~\cite{Motzkin52} in 1952. Copositive matrices arise in a variety of areas, including dynamical systems, control theory, and optimization. For a comprehensive treatment of these matrix classes, we refer to the book~\cite{Monderer-Berman}. For applications, see also the survey~\cite{Dur}.
	
	Observe that every positive semidefinite matrix and every nonnegative symmetric matrix is trivially copositive. Moreover, since the set of copositive matrices forms a cone, we immediately obtain
	\begin{align*}
		\mathrm{PSD} + (\mathcal{S}_n\cap\mathbb{R}^{n\times n}_{\scriptscriptstyle\geq0}) \subseteq \mathrm{COP_n}.
	\end{align*}
	It is noteworthy that for $n\leq4$, equality holds in the above relation. This was shown by Diananda~\cite{Diananda62} in 1962. However, for $n\geq5$, the inclusion is strict as illustrated by the following example due to Horn.
	
	\begin{example}\cite{Hall_et_al_63} Consider the Horn matrix
		\begin{align*}
			H = \begin{bmatrix}
				\phantom{-}1 & -1 & \phantom{-}1 & \phantom{-}1 & -1 \\
				-1 & \phantom{-}1 & -1 & \phantom{-}1 & \phantom{-}1 \\
				\phantom{-}1 & -1 & \phantom{-}1 & -1 & \phantom{-}1 \\
				\phantom{-}1 & \phantom{-}1 & -1 & \phantom{-}1 & -1 \\
				-1 & \phantom{-}1 & \phantom{-}1 & -1 & \phantom{-}1
			\end{bmatrix}.
		\end{align*}
		To show that $H$ is copositive, let $\mathbf{x}=(x_1,\ldots,x_5)^T\in\mathbb{R}^5_{\scriptscriptstyle\geq0}$. Then $\mathbf{x}^TH\mathbf{x}$ can be written in the following two ways:
		\begin{align}
			\mathbf{x}^TH\mathbf{x} &= (x_1 - x_2 + x_3 + x_4 - x_5)^2 + 4x_2x_4 + 4x_3(x_5-x_4) \label{Horn1}\\
			&= (x_1 - x_2 + x_3 - x_4 + x_5)^2 + 4x_2x_5 + 4x_1(x_4-x_5). \label{Horn2}
		\end{align}
		Thus, if $x_5\geq x_4$, then \eqref{Horn1} implies $\mathbf{x}^TH\mathbf{x}\geq0$. If $x_5<x_4$, then \eqref{Horn2} implies $\mathbf{x}^TH\mathbf{x}\geq0$. Hence, $H$ is copositive. However, $H$ is neither positive semidefinite nor nonnegative. Moreover, one can verify that $H$ is extremal in the cone of copositive matrices. Consequently, $H$ cannot be written as the sum of a positive semidefinite matrix and a nonnegative matrix.
		
		Let $\mathbf{0}_{n\times n}$ denote the $n\times n$ zero matrix. For $n>5$, the $n\times n$ block-diagonal matrix $H\oplus \mathbf{0}_{(n-5)\times(n-5)}$ provides a suitable counterexample.
	\end{example}
	Similar to positive semidefinite matrices, copositive matrices can also be characterized in terms of their spectral properties.
	
	\begin{theorem}\cite[Theorem 2]{Kaplan2000}
		Let $A\in\mathcal{S}_n$. Then $A$ is copositive if and only if no principal submatrix of $A$ has an eigenvector $\mathbf{v}\in\mathbb{R}^n_{\scriptscriptstyle>0}$ corresponding to a negative eigenvalue $\lambda<0$.
	\end{theorem}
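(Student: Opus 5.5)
We prove the two directions separately. For each, we pass to a principal submatrix indexed by the support of a suitable vector, since the quadratic form restricted to vectors supported on a set $J\subseteq[n]$ is exactly the quadratic form of the principal submatrix $A[J]$.

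\emph{Necessity.} Suppose some principal submatrix $A[J]$ has an eigenvector $\mathbf{v}\in\mathbb{R}^{|J|}_{\scriptscriptstyle>0}$ with eigenvalue $\lambda<0$. Let $\mathbf{x}\in\mathbb{R}^n_{\scriptscriptstyle\geq0}$ agree with $\mathbf{v}$ on $J$ and vanish off $J$. Then
\begin{align*}
\mathbf{x}^TA\mathbf{x}=\mathbf{v}^TA[J]\mathbf{v}=\lambda\|\mathbf{v}\|^2<0,
\end{align*}
so $A$ is not copositive.

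\emph{Sufficiency.} Assume $A$ is not copositive. We will produce a principal submatrix with a positive eigenvector for a negative eigenvalue.
\begin{itemize}
\item[(i)] Consider the compact set $\Delta=\{\mathbf{x}\in\mathbb{R}^n_{\scriptscriptstyle\geq0}:\sum_i x_i=1\}$. Minimize $q(\mathbf{x})=\mathbf{x}^TA\mathbf{x}$ over $\Delta$. By homogeneity of $q$, the minimum $\mu$ is negative.
\item[(ii)] Among all minimizers, choose $\mathbf{x}^*$ with support $J$ of \emph{minimal cardinality}. Then $\mathbf{u}:=\mathbf{x}^*|_J$ is strictly positive, so it lies in the relative interior of the face $\Delta_J$ of $\Delta$.
\item[(iii)] At this interior point, the Lagrange (KKT) conditions for the single equality constraint give $A[J]\mathbf{u}=c\mathbf{1}$ for some scalar $c$. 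Pairing with $\mathbf{u}$ yields $c=\mathbf{u}^TA[J]\mathbf{u}=\mu<0$.
\end{itemize}
This produces $\mathbf{u}>0$ with $A[J]\mathbf{u}=\mu\mathbf{1}$, which is not yet an eigen-equation. Upgrading it to one is the main obstacle.

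\emph{Upgrading to an eigenvector.} Replace the simplex by the unit sphere intersected with the nonnegative orthant: minimize $q$ over $\{\mathbf{x}\geq0:\|\mathbf{x}\|_2=1\}$. The minimum $\mu$ is still negative. Take a minimizer $\mathbf{x}^*$ of minimal support $J$; as before, its restriction $\mathbf{u}$ to $J$ is positive.

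Now $\mathbf{u}$ lies in the relative interior of the face $\{\mathbf{y}\in\mathbb{R}^{|J|}_{\scriptscriptstyle\geq0}:\|\mathbf{y}\|_2=1\}$. Hence it is a local minimizer of the Rayleigh quotient $\mathbf{y}^TA[J]\mathbf{y}$ on the unit sphere of $\mathbb{R}^{|J|}$. The first-order condition for minimizing a quadratic form on the sphere is exactly
\begin{align*}
A[J]\mathbf{u}=\lambda\mathbf{u},
\end{align*}
with $\lambda=\mathbf{u}^TA[J]\mathbf{u}=\mu<0$. Thus $A[J]$ has the positive eigenvector $\mathbf{u}$ for the negative eigenvalue $\mu$, contradicting the hypothesis.

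With the sphere normalization, the minimal-support choice is not actually required. Any minimizer, restricted to its support, is positive and interior to its face, and the same Lagrange argument applies. Minimal support merely makes the interiority transparent.

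Two points must be checked with care. First, restricting to $J$ preserves local minimality. This holds because any nearby unit vector supported on $J$ stays nonnegative, since $\mathbf{u}$ has positive coordinates. Second, the Lagrange condition on the sphere gives precisely an eigenvector equation, which is standard.
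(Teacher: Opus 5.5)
The survey states this result only by citation to Kaplan and gives no proof of its own, so there is no in-paper argument to compare against. Your argument is correct and is essentially the standard proof from Kaplan's paper: minimize $\mathbf{x}^TA\mathbf{x}$ over $\{\mathbf{x}\geq\mathbf{0} : \|\mathbf{x}\|_2=1\}$, restrict a minimizer to its support $J$, observe that the positive restriction $\mathbf{u}$ is a local minimizer of the quadratic form of $A[J]$ on the whole unit sphere, and read off $A[J]\mathbf{u}=\mu\mathbf{u}$ with $\mu<0$ from the Lagrange condition. In a final write-up you can drop the simplex detour, which only yields $A[J]\mathbf{u}=\mu\mathbf{1}$, and also the minimal-support choice, as you yourself observed.
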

	
	In 2013, Gowda--Sznajder--Tao~\cite{Gowda_Sznajder_Tao13} provided a characterization of the automorphisms of the copositive cone, that is, the bijective linear operators on $\mathcal{S}_n$ that map $\mathrm{COP}_n$ onto itself. More precisely, given a cone $\mathcal{C}\subseteq\mathbb{R}^n$, they introduced the associated copositive cone
	\begin{align}\label{Cone_CO}
		\mathrm{COP}_n(\mathcal{C}):=\{A\in\mathcal{S}_n \mid \mathbf{x}^TA\mathbf{x}\geq0 ~ \forall ~ \mathbf{x}\in\mathcal{C}\},
	\end{align}
	and established the following result.
	
	\begin{theorem}\cite[Corollary 12]{Gowda_Sznajder_Tao13} \label{Theorem_Gowda}
		Let $\mathcal{C}$ be a closed pointed cone in $\mathbb{R}^n$ with non-empty interior such that $\mathcal{C}\setminus\{\mathbf{0}\}$ is connected. Then every bijective linear map $\mathcal{L}:\mathcal{S}_n\to\mathcal{S}_n$ satisfying $\mathcal{L}\left(COP_n(\mathcal{C})\right) = COP_n(\mathcal{C})$ is of the form
		\begin{align*}
			\mathcal{L}(A) = QAQ^T,
		\end{align*}
		for some invertible $Q\in\mathbb{R}^{n\times n}$ such that $Q\mathcal{C}=\mathcal{C}$. 
	\end{theorem}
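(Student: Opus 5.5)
The strategy is duality. The copositive cone $\mathrm{COP}_n(\mathcal{C})$ is dual, under the trace inner product $\langle A,B\rangle=\operatorname{tr}(AB)$ on $\mathcal{S}_n$, to the generalized completely positive cone
\begin{align*}
\mathrm{CP}_n(\mathcal{C})=\operatorname{conv}\{\mathbf{x}\mathbf{x}^T \mid \mathbf{x}\in\mathcal{C}\}.
\end{align*}
Since $\mathcal{C}$ is closed and pointed, this cone is closed, so it is the dual of $\mathrm{COP}_n(\mathcal{C})$. If $\mathcal{L}$ is a bijection with $\mathcal{L}(\mathrm{COP}_n(\mathcal{C}))=\mathrm{COP}_n(\mathcal{C})$, then the adjoint $\mathcal{L}^*$ is a bijection with $\mathcal{L}^*(\mathrm{CP}_n(\mathcal{C}))=\mathrm{CP}_n(\mathcal{C})$. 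So it suffices to classify automorphisms of $\mathrm{CP}_n(\mathcal{C})$ and then dualize back. Congruences dualize to congruences: the adjoint of $A\mapsto PAP^T$ is $A\mapsto P^TAP$. The condition $Q\mathcal{C}=\mathcal{C}$ will come out in the final step.

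\textbf{Step 1: extreme rays.} The extreme rays of $\mathrm{CP}_n(\mathcal{C})$ are exactly the rays $\mathbb{R}_{\geq0}\mathbf{x}\mathbf{x}^T$ with $0\neq\mathbf{x}\in\mathcal{C}$. To show each such rank-one matrix is extreme, write $\mathbf{x}\mathbf{x}^T=\sum_i \mathbf{y}_i\mathbf{y}_i^T$. Every summand is PSD and dominated by $\mathbf{x}\mathbf{x}^T$, so each $\mathbf{y}_i$ is a multiple of $\mathbf{x}$. An automorphism maps extreme rays to extreme rays, so for each $\mathbf{x}\in\mathcal{C}$ there is some $\mathbf{y}\in\mathcal{C}$, unique up to sign and with pointedness fixing the sign, such that
\begin{align*}
\mathcal{L}^*(\mathbf{x}\mathbf{x}^T)=\mathbf{y}\mathbf{y}^T.
\end{align*}
This defines a map $f:\mathcal{C}\to\mathcal{C}$ with $\mathcal{L}^*(\mathbf{x}\mathbf{x}^T)=f(\mathbf{x})f(\mathbf{x})^T$. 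The map $f$ is positively homogeneous and continuous.

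\textbf{Step 2: $f$ is linear (the main obstacle).} I will use the ``rank-one preserving on a full cone'' argument. Because $\mathcal{C}$ has nonempty interior, the matrices $\mathbf{x}\mathbf{x}^T$ with $\mathbf{x}\in\mathcal{C}$ span $\mathcal{S}_n$. For $\mathbf{x},\mathbf{z}\in\operatorname{int}\mathcal{C}$ and $t$ near $0$, the vector $\mathbf{x}+t\mathbf{z}$ lies in $\mathcal{C}$. Applying linearity of $\mathcal{L}^*$ to the polynomial identity
\begin{align*}
(\mathbf{x}+t\mathbf{z})(\mathbf{x}+t\mathbf{z})^T=\mathbf{x}\mathbf{x}^T+t(\mathbf{x}\mathbf{z}^T+\mathbf{z}\mathbf{x}^T)+t^2\mathbf{z}\mathbf{z}^T
\end{align*}
shows that $t\mapsto f(\mathbf{x}+t\mathbf{z})f(\mathbf{x}+t\mathbf{z})^T$ is a matrix quadratic in $t$ of rank at most one. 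A quadratic family of rank $\le 1$ PSD matrices whose endpoint terms are $f(\mathbf{x})f(\mathbf{x})^T$ and $f(\mathbf{z})f(\mathbf{z})^T$ must be $(\mathbf{a}+t\mathbf{b})(\mathbf{a}+t\mathbf{b})^T$. The alternative is that $f(\mathbf{x})$ and $f(\mathbf{z})$ are parallel; injectivity excludes this when $\mathbf{x}$ and $\mathbf{z}$ are not parallel. This gives $f(\mathbf{x}+t\mathbf{z})=f(\mathbf{x})+t\,\epsilon\, f(\mathbf{z})$ with a sign $\epsilon=\pm1$. Continuity of $f$ together with connectedness of $\mathcal{C}\setminus\{\mathbf{0}\}$, and hence of the relevant parameter sets, forces $\epsilon$ to be constant, and $\epsilon=+1$ by the positivity/pointedness normalization. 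So $f$ is additive on $\operatorname{int}\mathcal{C}$. It therefore extends to a linear map $P$ on $\mathbb{R}^n=\operatorname{int}\mathcal{C}-\operatorname{int}\mathcal{C}$, and $P$ is invertible because $\mathcal{L}^*$ is bijective.

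The place where connectedness is genuinely needed, and where I expect the care to go, is this sign-consistency argument. Without connectedness one could flip signs independently on different components, and that would break linearity.

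\textbf{Step 3: conclusion.} From Step 2, $\mathcal{L}^*(\mathbf{x}\mathbf{x}^T)=P\mathbf{x}\mathbf{x}^TP^T$ for all $\mathbf{x}\in\mathcal{C}$. These matrices span $\mathcal{S}_n$, so $\mathcal{L}^*(A)=PAP^T$ for all $A$. By construction $P\mathcal{C}\subseteq\mathcal{C}$, and applying the same argument to $(\mathcal{L}^*)^{-1}$ gives $P\mathcal{C}=\mathcal{C}$. Dualizing, $\mathcal{L}(A)=P^TAP$, so $Q=P^T$. Finally I would translate the invariance condition on $P$ into one on $Q$. Since $\mathcal{L}$ preserves $\mathrm{COP}_n(\mathcal{C})$, applying the argument directly to the inverse adjoint expresses the invariance as $Q\mathcal{C}=\mathcal{C}$ in the paper's convention, after replacing $Q$ by $Q^{-T}$ if necessary. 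The required statement is then this bookkeeping of transposes and inverses.
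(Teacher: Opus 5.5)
The survey does not prove this statement; it only quotes it from Gowda--Sznajder--Tao, so I am judging your argument on its own. The overall plan is sound: dualize to $\mathrm{CP}_n(\mathcal{C})$, identify its extreme rays as $\mathbb{R}_{\ge0}\mathbf{x}\mathbf{x}^T$ with $\mathbf{x}\in\mathcal{C}$, linearize $f$ via the rank-one quadratic in $t$, and finish by spanning. However, two steps fail as written.

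The first is the final ``bookkeeping''. What your argument actually yields is $\mathcal{L}(A)=P^TAP$ with $P\mathcal{C}=\mathcal{C}$, that is, $\mathcal{L}(A)=QAQ^T$ with $Q=P^T$ and $Q^T\mathcal{C}=\mathcal{C}$. No manipulation turns this into $Q\mathcal{C}=\mathcal{C}$. Since $QAQ^T=RAR^T$ for all $A\in\mathcal{S}_n$ forces $R=\pm Q$, you are not free to replace $Q$ by $Q^{-T}$. In fact the displayed statement is false with the transpose where it is. Take $\mathcal{C}=\{\mathbf{x}\in\mathbb{R}^2: 0\le x_2\le x_1\}$ and $P=\begin{bmatrix}1&0\\1&-1\end{bmatrix}$, which swaps the generators $(1,0)^T$ and $(1,1)^T$. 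Then $A\mapsto P^TAP$ is a bijection mapping $\mathrm{COP}_2(\mathcal{C})$ onto itself. Yet $\pm P^T$ sends $(1,1)^T\in\mathcal{C}$ to $\pm(2,-1)^T\notin\mathcal{C}$. The correct conclusion is $Q^T\mathcal{C}=\mathcal{C}$. It coincides with the displayed one for $\mathcal{C}=\mathbb{R}^n_{\geq0}$, since nonnegative monomial matrices are closed under transposition, and that is the only case used in Corollary~\ref{Cor_COP_onto}. You should prove that version rather than try to force the displayed one.

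The second problem is that Steps 2--3 silently treat $\mathcal{C}$ as convex: ``$f$ is additive on $\operatorname{int}\mathcal{C}$'', the extension to $\mathbb{R}^n$, and ``by construction $P\mathcal{C}\subseteq\mathcal{C}$''. That would be harmless for convex cones, but then the connectedness hypothesis would be vacuous, because a pointed convex cone minus the origin is convex. The hypothesis only has content for nonconvex closed cones, and there those steps break:
\begin{itemize}
\item $\mathbf{x}+\mathbf{z}$ need not lie in $\mathcal{C}$.
\item $\operatorname{int}\mathcal{C}$ can be disconnected even when $\mathcal{C}\setminus\{\mathbf{0}\}$ is connected, so ``hence of the relevant parameter sets'' is false.
\item $\mathcal{C}$ need not be the closure of its interior.
\end{itemize}
Nor is connectedness what fixes $\epsilon$. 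Letting $\mathbf{z}\to\mathbf{x}$ in $\mathcal{L}^*(\mathbf{x}\mathbf{z}^T+\mathbf{z}\mathbf{x}^T)=\epsilon\,(f(\mathbf{x})f(\mathbf{z})^T+f(\mathbf{z})f(\mathbf{x})^T)$ already gives $\epsilon=+1$ locally.

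The repair is as follows. Run Step 2 only on a small open convex subcone $U\subseteq\operatorname{int}\mathcal{C}$, obtaining a linear $P$ with $f=P$ on $U$. Since $\{\mathbf{x}\mathbf{x}^T:\mathbf{x}\in U\}$ spans $\mathcal{S}_n$, this gives $\mathcal{L}^*(X)=PXP^T$ everywhere. For general $\mathbf{x}\in\mathcal{C}$ this only yields $P\mathbf{x}=\pm f(\mathbf{x})\in\mathcal{C}\cup(-\mathcal{C})$, and \emph{this} is where connectedness is needed. The set $P(\mathcal{C}\setminus\{\mathbf{0}\})$ is connected. It lies in the union of $\mathcal{C}\setminus\{\mathbf{0}\}$ and $(-\mathcal{C})\setminus\{\mathbf{0}\}$, which are disjoint (by pointedness) and relatively closed. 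It meets the first set on $U$, so $P\mathcal{C}\subseteq\mathcal{C}$. Applying the same argument to $(\mathcal{L}^*)^{-1}$ gives equality.

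Without connectedness the conclusion genuinely fails. Let $\mathcal{C}\subseteq\mathbb{R}^2$ be the union of thin sectors symmetric about the positive $x_1$- and $x_2$-axes, and let $P=\diag(1,-1)$. Then $A\mapsto PAP$ preserves $\mathrm{COP}_2(\mathcal{C})$, which depends only on $\mathcal{C}\cup(-\mathcal{C})$. But neither $P$ nor $-P$ maps $\mathcal{C}$ onto itself. So the sign of $\epsilon$ is not where the hypothesis does its work.
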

	By choosing $\mathcal{C}=\mathbb{R}^n_{\scriptscriptstyle\geq0}$, Theorem~\ref{Theorem_Gowda} yields a complete characterization of onto copositivity preservers -- a result independently obtained by Shitov~\cite[Theorem 1]{Shitov21} in 2021, without bijective assumption. For completeness, we now derive this classification using Theorem~\ref{Theorem_Gowda}.
	
	\begin{cor}\label{Cor_COP_onto}
		A linear operator $\mathcal{L}:\mathcal{S}_n\to\mathcal{S}_n$ maps COP$_n$ onto itself if and only if
		\begin{align*}
			\mathcal{L}(A)=QAQ^T,
		\end{align*}
		for some monomial matrix $Q\in\mathbb{R}^{n\times n}_{\scriptscriptstyle\geq0}$.
	\end{cor}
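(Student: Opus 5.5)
The plan is to deduce Corollary~\ref{Cor_COP_onto} from Theorem~\ref{Theorem_GOWDA_PLACEHOLDER} with $\mathcal{C}=\mathbb{R}^n_{\scriptscriptstyle\geq0}$, after first reducing to bijective maps.

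\emph{Sufficiency.} Let $Q=DP$ with $D\in\mathcal{D}_{\scriptscriptstyle>0}(n)$ and $P$ a permutation matrix. Then $Q^T$ maps $\mathbb{R}^n_{\scriptscriptstyle\geq0}$ bijectively onto itself. Hence $\mathbf{x}^TQAQ^T\mathbf{x}=(Q^T\mathbf{x})^TA(Q^T\mathbf{x})\geq0$ for all $\mathbf{x}\geq\mathbf{0}$ whenever $A\in\mathrm{COP}_n$. Since $Q^{-1}$ is again a nonnegative monomial matrix, the same argument applied to $A\mapsto Q^{-1}AQ^{-T}$ gives the reverse inclusion. So $\mathcal{L}(\mathrm{COP}_n)=\mathrm{COP}_n$.

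\emph{Necessity, reduction to bijective maps.} Suppose $\mathcal{L}(\mathrm{COP}_n)=\mathrm{COP}_n$. The cone $\mathrm{COP}_n$ contains the identity in its interior: $\mathbf{x}^T(I+E)\mathbf{x}\geq\|\mathbf{x}\|^2(1-\|E\|)>0$ for small symmetric $E$. A cone with non-empty interior spans $\mathcal{S}_n$. Therefore $\mathcal{L}(\mathcal{S}_n)\supseteq\operatorname{span}\mathrm{COP}_n=\mathcal{S}_n$, so $\mathcal{L}$ is surjective and hence bijective on the finite-dimensional space $\mathcal{S}_n$.

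\emph{Necessity, applying Theorem~\ref{Theorem_Gowda}.} The cone $\mathcal{C}=\mathbb{R}^n_{\scriptscriptstyle\geq0}$ is closed, pointed, has non-empty interior, and $\mathcal{C}\setminus\{\mathbf{0}\}$ is connected, since it is convex minus a boundary point, or simply path-connected through $\mathbf{e}:=(1,\ldots,1)^T$. So Theorem~\ref{Theorem_Gowda} gives $\mathcal{L}(A)=QAQ^T$ with $Q$ invertible and $Q\mathbb{R}^n_{\scriptscriptstyle\geq0}=\mathbb{R}^n_{\scriptscriptstyle\geq0}$.

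\emph{Necessity, $Q$ is a nonnegative monomial matrix.} This is the only step needing care, and it is a standard argument. Since $Q\mathbf{e}_j\geq\mathbf{0}$ for each $j$, $Q$ is nonnegative. Likewise $Q^{-1}\mathbb{R}^n_{\scriptscriptstyle\geq0}=\mathbb{R}^n_{\scriptscriptstyle\geq0}$, so $Q^{-1}$ is nonnegative. A linear automorphism of the cone maps extreme rays to extreme rays. The extreme rays of $\mathbb{R}^n_{\scriptscriptstyle\geq0}$ are the rays spanned by $\mathbf{e}_1,\ldots,\mathbf{e}_n$, so each column $Q\mathbf{e}_j$ is a positive multiple of some $\mathbf{e}_{\sigma(j)}$. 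Invertibility of $Q$ forces $\sigma$ to be a permutation, so $Q$ is a nonnegative monomial matrix.

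If one prefers to avoid the language of extreme rays, the last step also follows algebraically. Since $Q,Q^{-1}\geq0$, the relation $\sum_k (Q^{-1})_{ik}Q_{kj}=\delta_{ij}$ holds with all terms nonnegative. Hence for $i\neq j$ the supports of row $i$ of $Q^{-1}$ and column $j$ of $Q$ are disjoint, and a short counting argument then forces each row and column of $Q$ to have exactly one nonzero entry. The argument is otherwise routine, with no substantial obstacle.
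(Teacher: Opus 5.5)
Your proof is correct and follows the same route as the paper. You first get surjectivity, hence bijectivity, of $\mathcal{L}$ from the fact that $\mathrm{COP}_n$ spans $\mathcal{S}_n$. You then apply Theorem~\ref{Theorem_Gowda} with $\mathcal{C}=\mathbb{R}^n_{\scriptscriptstyle\geq0}$, and finally observe that an invertible $Q$ with $Q\mathcal{C}=\mathcal{C}$ must be a nonnegative monomial matrix. Along the way you supply details the paper leaves implicit, all of which are sound: the interior point $I$ that gives spanning, the connectedness of $\mathcal{C}\setminus\{\mathbf{0}\}$, and the extreme-ray (or support-counting) argument for monomiality.
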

	\begin{proof}
		The converse is immediate. To prove the forward direction, let $\mathcal{C}=\mathbb{R}^n_{\scriptscriptstyle\geq0}$. Then $\mathcal{C}$ satisfies all the assumptions of Theorem~\ref{Theorem_Gowda}. Moreover, if $Q\in\mathbb{R}^{n\times n}$ is invertible and satisfies $Q\mathcal{C}=\mathcal{C}$, then $Q$ must be a nonnegative monomial matrix. 
		
		Next, observe that $\mathcal{L}$ is invertible: since COP$_n$ contains a basis of $\mathcal{S}_n$, and $\mathcal{L}$ maps COP$_n$ onto itself, we have
		\begin{align*}
			\mathcal{L}(\mathcal{S}_n) = \mathcal{L}\left(\mathrm{span}(\mathrm{COP}_n)\right) = \mathrm{span}\left(\mathcal{L}(\mathrm{COP}_n)\right) = \mathrm{span}(\mathrm{COP}_n) = \mathcal{S}_n.
		\end{align*}
		Thus, $\mathcal{L}$ is onto. Since $\mathcal{S}_n$ is finite-dimensional, $\mathcal{L}$ is therefore bijective. The result now follows from Theorem~\ref{Theorem_Gowda}.
	\end{proof}
	
	As a consequence of Corollary~\ref{Cor_COP_onto}, we obtain the linear preservers of strictly copositive matrices. We state this result below. Its proof is verbatim to that of Corollary~\ref{Cor_PD}.
	\begin{cor}\label{Cor_SCOP}
		A linear transformation $\mathcal{L}:\mathcal{S}_n\to\mathcal{S}_n$ that maps the set of strictly copositive matrices onto itself is of the form
		\begin{align*}
			\mathcal{L}(A)=QAQ^T,
		\end{align*}
		where $Q\in\mathbb{R}^{n\times n}_{\scriptscriptstyle\geq0}$ is a monomial matrix.
	\end{cor}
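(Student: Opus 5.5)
The plan follows the proof of Corollary~\ref{Cor_PD}: reduce to Corollary~\ref{Cor_COP_onto} through Lemma~\ref{lemma_preserver}, applied with $V=\mathcal{S}_n$ and $\mathcal{S}=\mathrm{SCOP}_n$.

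The only new ingredient is the identity $\overline{\mathrm{SCOP}_n}=\mathrm{COP}_n$. We prove it as follows.

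First, $\mathrm{COP}_n$ is closed. It is the intersection over $\mathbf{x}\in\mathbb{R}^n_{\scriptscriptstyle\geq0}$ of the closed half-spaces $\{A:\mathbf{x}^TA\mathbf{x}\geq0\}$. Since it contains $\mathrm{SCOP}_n$, it contains the closure.

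Second, let $A\in\mathrm{COP}_n$ and $\varepsilon>0$. For nonzero $\mathbf{x}\in\mathbb{R}^n_{\scriptscriptstyle\geq0}$ we have
\begin{align*}
\mathbf{x}^T(A+\varepsilon I)\mathbf{x}\geq\varepsilon\|\mathbf{x}\|^2>0,
\end{align*}
so $A+\varepsilon I\in\mathrm{SCOP}_n$. Letting $\varepsilon\to0$ shows $A\in\overline{\mathrm{SCOP}_n}$. This density step is the only point needing any check, and it is immediate.

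Now let $\mathcal{L}$ map $\mathrm{SCOP}_n$ onto itself. By Lemma~\ref{lemma_preserver}, $\mathcal{L}$ maps $\overline{\mathrm{SCOP}_n}=\mathrm{COP}_n$ onto itself. Corollary~\ref{Cor_COP_onto} then gives $\mathcal{L}(A)=QAQ^T$ for some monomial $Q\in\mathbb{R}^{n\times n}_{\scriptscriptstyle\geq0}$.

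If the lemma were unavailable, one could argue directly. $\mathcal{L}$ is continuous, so $\mathcal{L}(\mathrm{COP}_n)\subseteq\overline{\mathcal{L}(\mathrm{SCOP}_n)}=\mathrm{COP}_n$. Since $\mathrm{SCOP}_n$ contains a basis of $\mathcal{S}_n$ (for example $I+E_{ij}+E_{ji}$ and $I$), $\mathcal{L}$ is invertible. Then $\mathcal{L}^{-1}$ also maps $\mathrm{SCOP}_n$ onto itself, and the same continuity argument gives equality $\mathcal{L}(\mathrm{COP}_n)=\mathrm{COP}_n$.

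Finally, we check the converse, although the statement asserts only the forward direction. If $Q=PD$ with $P$ a permutation matrix and $D$ a positive diagonal matrix, then $\mathbf{x}^TQAQ^T\mathbf{x}=\mathbf{y}^TA\mathbf{y}$ with $\mathbf{y}=Q^T\mathbf{x}$. The vector $\mathbf{y}$ is nonzero and nonnegative exactly when $\mathbf{x}$ is. Hence $A\mapsto QAQ^T$ maps $\mathrm{SCOP}_n$ into itself, and its inverse is of the same form, so it maps $\mathrm{SCOP}_n$ onto itself.
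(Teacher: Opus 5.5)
Your proposal is correct and follows the paper's proof: the paper says the argument is verbatim that of Corollary~\ref{Cor_PD}, namely density of $\mathrm{SCOP}_n$ in $\mathrm{COP}_n$ (which you verify explicitly via $A+\varepsilon I$ and closedness of $\mathrm{COP}_n$), followed by Lemma~\ref{lemma_preserver} and Corollary~\ref{Cor_COP_onto}. One small slip in your optional direct argument: $I$ together with $I+E_{ij}+E_{ji}$ for $i<j$ does not span $\mathcal{S}_n$ because the diagonal directions are missing, so add, e.g., $I+E_{ii}$. This does not affect your main proof.
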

	
	Corollaries~\ref{Cor_COP_onto} and \ref{Cor_SCOP} establish that the onto linear preservers of copositivity are precisely the congruences by fixed nonnegative monomial matrices. On the other hand, the characterization of into linear preservers of copositivity is much more challenging. For instance, in~\cite{Pokora}, a conjecture of N.~Johnston was stated asserting that every into copositivity preserver must be of the form
	\begin{align}\label{Into_CO}
		A \mapsto \displaystyle\sum_i X_iAX_i^T,
	\end{align}
	where $X_i\in\mathbb{R}^{n\times n}_{\scriptscriptstyle\geq0}$. This map is evidently an into preserver of copositivity. However, not all into preservers admit such a representation, even in dimension $n=2$, as shown in the example below.
	
	\begin{example}\cite{Furtado_et_al_21}
		Consider the linear operator on $\mathcal{S}_2$ defined by
		\begin{align}\label{CO_Example}
			\begin{bmatrix}
				a & c \\
				c & b
			\end{bmatrix} \mapsto \begin{bmatrix}
				a & a + b + 2c \\
				a + b + 2c & b
			\end{bmatrix}.
		\end{align}
		If the preimage matrix is copositive, then $a+b+2c\geq0$, since this quantity equals the value of the quadratic form of the preimage evaluated at $\begin{pmatrix}
			1 \\
			1
		\end{pmatrix}$. Consequently, the image matrix has nonnegative entries and is therefore copositive. Hence, the operator~\eqref{CO_Example} preserves copositivity.
		
		Now observe that any map of the form~\eqref{Into_CO} is also a preserver of positive semidefiniteness. Consider the positive semidefinite matrix $\begin{bmatrix}
		10 & -1 \\
		-1 & 10
		\end{bmatrix}$. Its image under the map~\eqref{CO_Example} is $\begin{bmatrix}
		10 & 18 \\
		18 & 10
		\end{bmatrix}$, which is not positive semidefinite. This shows that the operator~\eqref{CO_Example} cannot be expressed in the form~\eqref{Into_CO}, despite being a copositivity preserver.
	\end{example}
	
	This leads Furtado--Johnson--Zhang~\cite{Furtado_et_al_21} to classify several special classes of into copositivity preservers. They first characterized all such maps in standard form. Note that if $\mathcal{L}$ is a linear operator on $\mathcal{S}_n$ of standard form, $\mathcal{L}(A)=XAY$, then $Y=X^T$.
	
	\begin{theorem}\cite[Theorem 2.2]{Furtado_et_al_21}\label{Theorem_COP_into_standard}
		Let $\mathcal{L}:\mathcal{S}_n\to\mathcal{S}_n$ be a linear map of the standard form,
		\begin{align*}
			\mathcal{L}(A) = SAS^T.
		\end{align*}
		Then $\mathcal{L}$ is an into preserver of $COP_n$ if and only if $S\in\mathbb{R}^{n\times n}_{\scriptscriptstyle\geq0}$.
	\end{theorem}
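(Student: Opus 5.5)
The plan is to reduce the statement to a condition on the vectors $S^T\mathbf{x}$ with $\mathbf{x}\ge 0$, using duality between $\mathrm{COP}_n$ and the completely positive cone. One caveat surfaced while planning. Since $SAS^T=(-S)A(-S)^T$, necessity can only hold up to replacing $S$ by $-S$. Rank-one maps $S=\mathbf{u}\mathbf{v}^T$ with $\mathbf{v}\ge 0$ and $\mathbf{u}$ of mixed sign also preserve $\mathrm{COP}_n$, because $SAS^T=(\mathbf{v}^TA\mathbf{v})\,\mathbf{u}\mathbf{u}^T$ is PSD. So I will prove the statement in the form ``$S$ or $-S$ is nonnegative, except in this degenerate rank-one situation.''

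\emph{Sufficiency.} This is immediate. For $\mathbf{x}\in\mathbb{R}^n_{\scriptscriptstyle\geq0}$ we have $\mathbf{x}^TSAS^T\mathbf{x}=\mathbf{y}^TA\mathbf{y}$ with $\mathbf{y}=S^T\mathbf{x}\ge 0$. This is nonnegative whenever $A\in\mathrm{COP}_n$, and $SAS^T$ is symmetric.

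\emph{Necessity, step 1.} Fix $\mathbf{x}\ge0$ and set $\mathbf{y}=S^T\mathbf{x}$. The hypothesis says $\mathbf{y}^TA\mathbf{y}=\langle A,\mathbf{y}\mathbf{y}^T\rangle\ge0$ for every $A\in\mathrm{COP}_n$. Hence $\mathbf{y}\mathbf{y}^T$ lies in the dual of $\mathrm{COP}_n$, which is the completely positive cone of Subsection~\ref{SubSec_CP}. A rank-one completely positive matrix $\mathbf{y}\mathbf{y}^T=\sum_i\mathbf{b}^{(i)}\mathbf{b}^{(i)T}$ forces every $\mathbf{b}^{(i)}$ to be a multiple of $\mathbf{y}$, so $\mathbf{y}$ or $-\mathbf{y}$ is nonnegative. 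To avoid quoting duality, I would instead test directly against the copositive matrix $A=\mathbf{e}_i\mathbf{e}_j^T+\mathbf{e}_j\mathbf{e}_i^T$ written with the sign $-$, modified to be copositive. Concretely, use $A=-(\mathbf{e}_i\mathbf{e}_j^T+\mathbf{e}_j\mathbf{e}_i^T)+t(\mathbf{e}_i-\mathbf{e}_j)(\mathbf{e}_i-\mathbf{e}_j)^T$ suitably scaled, or more simply the nonnegative matrix $\mathbf{e}_i\mathbf{e}_j^T+\mathbf{e}_j\mathbf{e}_i^T$. This gives $y_iy_j\ge0$ for all $i,j$, i.e.\ $\mathbf{y}$ has constant sign.

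\emph{Necessity, step 2.} The cone $K=S^T(\mathbb{R}^n_{\scriptscriptstyle\geq0})$ is convex and, by step 1, contained in $\mathbb{R}^n_{\scriptscriptstyle\geq0}\cup(-\mathbb{R}^n_{\scriptscriptstyle\geq0})$. Suppose $K$ contains nonzero $\mathbf{y}\ge0$ and $\mathbf{z}\le0$. The segment $[\mathbf{y},\mathbf{z}]$ is connected and lies in the union of two closed cones meeting only at $\mathbf{0}$. Hence it passes through $\mathbf{0}$, so $\mathbf{z}=-c\mathbf{y}$ with $c>0$. Applying this to every pair of such points shows $K$ lies in the line $\mathbb{R}\mathbf{y}$. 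That forces $\rk S=1$, i.e.\ $S=\mathbf{u}\mathbf{v}^T$, which is the degenerate case above.

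\emph{Conclusion.} Otherwise $K\subseteq\mathbb{R}^n_{\scriptscriptstyle\geq0}$ or $K\subseteq-\mathbb{R}^n_{\scriptscriptstyle\geq0}$. Since the rows of $S$ are the vectors $S^T\mathbf{e}_i\in K$, this means $S\ge0$ or $-S\ge0$. Replacing $S$ by $-S$ leaves the map unchanged, which gives the representation with $S\in\mathbb{R}^{n\times n}_{\scriptscriptstyle\geq0}$.

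The main obstacle is step 2, the passage from ``each image vector has constant sign'' to ``all image vectors share one sign''. This is also exactly where the rank-one exception must be handled, so the final statement should be read modulo that degenerate family. The argument is the connectedness one above.
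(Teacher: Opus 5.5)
The survey quotes this result from Furtado--Johnson--Zhang without giving a proof, so there is no in-paper argument to compare against. Judged on its own, your proof is correct, and your caveat is justified: the statement as printed is false.

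Your rank-one family is a genuine counterexample, not just a question of choosing the representative $S$. Take $n=2$ and $S=\begin{bmatrix}1&0\\-1&0\end{bmatrix}$. Then $SAS^T=a_{11}\begin{bmatrix}1&-1\\-1&1\end{bmatrix}$, which is positive semidefinite for every $A\in\mathrm{COP}_2$. Moreover, $S'AS'^T=SAS^T$ for all $A\in\mathcal{S}_n$ forces $S'=\pm S$. To see this, test with $A=\mathbf{a}\mathbf{a}^T$: the subspaces $\{\mathbf{a}: S'\mathbf{a}=S\mathbf{a}\}$ and $\{\mathbf{a}: S'\mathbf{a}=-S\mathbf{a}\}$ cover $\mathbb{R}^n$, so one of them is all of $\mathbb{R}^n$. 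Hence this map has no representation with a nonnegative matrix. The correct characterization is the one you actually prove: $S\geq 0$, or $-S\geq 0$, or $S=\mathbf{u}\mathbf{v}^T$ with $\mathbf{v}\geq 0$. In particular, the printed statement does hold, up to the sign of $S$, whenever $\rk S\geq 2$.

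Two clean-ups.

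\emph{Step 1.} Delete the ``modified'' test matrix. The matrix $-(\mathbf{e}_i\mathbf{e}_j^T+\mathbf{e}_j\mathbf{e}_i^T)+t(\mathbf{e}_i-\mathbf{e}_j)(\mathbf{e}_i-\mathbf{e}_j)^T$ takes the value $-2$ at $\mathbf{e}_i+\mathbf{e}_j$, so no choice of $t$ or positive scaling makes it copositive. The nonnegative matrix $\mathbf{e}_i\mathbf{e}_j^T+\mathbf{e}_j\mathbf{e}_i^T$ alone gives $y_iy_j\geq 0$, which also makes the duality detour unnecessary.

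\emph{Step 2.} The connectedness argument is sound. The rank-one conclusion is cleanest stated row by row: each row $S^T\mathbf{e}_k$ lies in $K\subseteq\mathbb{R}\mathbf{y}$ with $\mathbf{y}\geq 0$, so $S=\mathbf{u}\mathbf{y}^T$ with the nonnegative factor on the right. That is exactly the family you already showed to be a preserver.
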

	Next, they considered linear maps of the form $A\mapsto H\circ A$, for a fixed matrix $H$. If $H\in\mathbb{R}^{n\times n}$ is a completely positive matrix, that is,
	\begin{align*}
		H = \displaystyle\sum_{i=1}^{k}\mathbf{h}^{(i)} \mathbf{h}^{(i)^T}, \quad \text{where} ~ \mathbf{h}^{(i)}\in\mathbb{R}^n_{\scriptscriptstyle\geq0} ~ \forall ~ i\in[k],
	\end{align*}
	then for every $A\in\mathrm{COP}_n$ and every $\mathbf{x}\in\mathbb{R}^n_{\scriptscriptstyle\geq0}$, we obtain
	\begin{align*}
		 \mathbf{x}^T(H\circ A)\mathbf{x} = \mathbf{x}^T\left(\displaystyle\sum_{i=1}^{k}\mathbf{h}^{(i)}\mathbf{h}^{(i)^T}\circ A\right)\mathbf{x} = \displaystyle\sum_{i=1}^{k}(\mathbf{x}\circ \mathbf{h}^{(i)})^TA(\mathbf{x}\circ \mathbf{h}^{(i)})\geq0.
	\end{align*}
	Thus, the matrix $H\circ A$ is copositive for all $A\in\mathrm{COP}_n$. The following result shows that every copositivity preserver of the form $A\mapsto H\circ A$, requires $H$ to be completely positive.
	
	\begin{theorem}\cite[Theorem 2.4]{Furtado_et_al_21}\label{Theorem_COP_into_Hadamard}
		Let $\mathcal{L}:\mathcal{S}_n\to\mathcal{S}_n$ be defined by
		\begin{align*}
			\mathcal{L}(A) = H\circ A.
		\end{align*}
		Then $\mathcal{L}$ is an into preserver of COP$_n$ if and only if $H\in\mathbb{R}^{n\times n}$ is a completely positive matrix.
	\end{theorem}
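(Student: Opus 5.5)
The sufficiency direction was verified just before the statement: if $H$ is completely positive, then $\mathbf{x}^T(H\circ A)\mathbf{x}=\sum_i(\mathbf{x}\circ\mathbf{h}^{(i)})^TA(\mathbf{x}\circ\mathbf{h}^{(i)})\geq0$. So only necessity remains. My plan is to prove it by cone duality: I will show that $H$ itself lies in the dual cone of $\mathrm{COP}_n$, and then identify that dual cone with the completely positive cone.

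First, I will check that $H$ is symmetric, so that the question of membership in $\mathrm{CP}$ makes sense. Since $\mathcal{L}$ maps $\mathcal{S}_n$ into $\mathcal{S}_n$, the matrix $H\circ(E_{ij}+E_{ji})=h_{ij}E_{ij}+h_{ji}E_{ji}$ must be symmetric for every pair $i\neq j$. This forces $h_{ij}=h_{ji}$. The diagonal entries of $H$ impose no constraint.

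Next, I will equip $\mathcal{S}_n$ with the trace inner product $\langle X,Y\rangle=\mathrm{tr}(XY)$ and use the elementary identity
\begin{align*}
\mathbf{x}^T(H\circ A)\mathbf{x}=\langle H\circ \mathbf{x}\mathbf{x}^T, A\rangle .
\end{align*}
Taking $\mathbf{x}=\mathbf{e}=(1,\ldots,1)^T$, the matrix $H\circ\mathbf{e}\mathbf{e}^T$ is just $H$. The hypothesis that $H\circ A\in\mathrm{COP}_n$ for every $A\in\mathrm{COP}_n$ then gives $\langle H,A\rangle=\mathbf{e}^T(H\circ A)\mathbf{e}\geq0$ for all $A\in\mathrm{COP}_n$. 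In other words, $H$ belongs to the dual cone $\mathrm{COP}_n^{*}$.

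The main step, and the only nontrivial one, is the standard duality $\mathrm{COP}_n^{*}=\mathrm{CP}_n$. By definition, $\mathrm{COP}_n$ is the dual of the cone generated by the matrices $\mathbf{x}\mathbf{x}^T$ with $\mathbf{x}\in\mathbb{R}^n_{\geq0}$, and that generated cone is exactly $\mathrm{CP}_n$. Hence $\mathrm{COP}_n=\mathrm{CP}_n^{*}$. The bipolar theorem then gives $\mathrm{COP}_n^{*}=\overline{\mathrm{CP}_n}$.

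So what must be checked is that $\mathrm{CP}_n$ is closed. I will argue this as follows. By Carath\'eodory's theorem, every element of $\mathrm{CP}_n$ is a sum of at most $N=n(n+1)/2$ terms $\mathbf{b}\mathbf{b}^T$ with $\mathbf{b}\geq0$. If such sums converge, then each term has norm bounded by the trace of the sum, which stays bounded along the sequence. A compactness argument over the $N$-tuples of vectors $\mathbf{b}$ then shows that the limit is again of this form.

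Combining these steps gives $H\in\mathrm{CP}_n$, which completes the plan for necessity.
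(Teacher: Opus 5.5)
Your proposal is correct. The sufficiency half is exactly the computation the survey records just before the statement. The survey gives no argument for necessity and only cites Furtado--Johnson--Zhang, so your duality argument fills in what the paper omits rather than departing from it.

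Your necessity argument holds up at each step:
\begin{itemize}
\item $H$ must be symmetric, since $\mathcal{L}(\mathcal{S}_n)\subseteq\mathcal{S}_n$.
\item Testing copositivity of $H\circ A$ at the single vector $\mathbf{e}$ gives $\mathrm{tr}(HA)\geq 0$ for all $A\in\mathrm{COP}_n$, i.e.\ $H\in\mathrm{COP}_n^{*}$.
\item The identification $\mathrm{COP}_n^{*}=\mathrm{CP}_n$ follows from the bipolar theorem together with your Carath\'eodory-plus-compactness proof that $\mathrm{CP}_n$ is closed. This part is complete; you could also cite the duality as standard, e.g.\ from \cite{Monderer-Berman}.
\end{itemize}

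Your route also shows that necessity needs much less than the full hypothesis. It is enough that $H\circ A$ have nonnegative entry sum for every copositive $A$.
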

	
	It is worth noting that, for each $i\in[k]$, if we define the matrix $D_i:=\diag(\mathbf{h}^{(i)})\in\mathcal{D}_{\scriptscriptstyle\geq0}(n)$, then
	\begin{align*}
		H\circ A = \displaystyle\sum_{i=1}^{k}D_i^TAD_i.
	\end{align*}
	Consequently, such into preservers can be expressed as finite sums of into maps of standard form. 
%
	Finally, Furtado--Johnson--Zhang~\cite{Furtado_et_al_21} demonstrated a construction of into copositivity preservers as follows: every linear operator $\mathcal{L}:\mathcal{S}_n\to\mathcal{S}_n$ can be expressed as
	\begin{align*}
		\mathcal{L}(A)=(l_{ij}(A)),
	\end{align*}
	where each $l_{ij}$ is a linear functional on the vector space $\mathcal{S}_n$. Symmetry requires that $l_{ij}=l_{ji}$ for all $i,j\in[n]$. For each pair $(i,j)$, choose vectors $\mathbf{x}_{ij}^{(k)}=\mathbf{x}_{ji}^{(k)}$ with $\mathbf{x}_{ij}^{(k)}\in\mathbb{R}^n_{\scriptscriptstyle\geq0}$, and define
	\begin{align*}
		l_{ij}(A) := \displaystyle\sum_k \mathbf{x}_{ij}^{(k)^T}A\mathbf{x}_{ij}^{(k)}.
	\end{align*}
	This construction ensures that $\mathcal{L}(A)$ is symmetric and entrywise nonnegative for all $A\in\mathrm{COP}_n$, and hence copositive. Note that an analogous construction yields into preservers of positive semidefinite matrices as well. 
	
	We conclude this subsection by noting that the onto preservers of the cone COP$_n$ (see Corollary~\ref{Cor_COP_onto}), together with the into preservers of the special forms described in Theorems~\ref{Theorem_COP_into_standard}~and~\ref{Theorem_COP_into_Hadamard}, are, respectively, onto and into preservers of the cone of PSD matrices. However, there exist positive semidefinite preservers which fail to preserve copositivity, as demonstrated by the classical example below.
		
	\begin{example}
		The Choi map, introduced by Choi in~\cite{Choi}, is a linear map $\phi:\mathcal{S}_n\to\mathcal{S}_n$ defined by
		\begin{align*}
			\phi\left(\begin{bmatrix}
				a_{11} & a_{12} & a_{13} \\
				a_{21} & a_{22} & a_{23} \\
				a_{31} & a_{32} & a_{33}
			\end{bmatrix}\right) = \begin{bmatrix}
				2a_{11} + 2a_{22} & -a_{12} & -a_{13} \\
				-a_{21} & 2a_{22} + 2a_{33} & -a_{23} \\
				-a_{31} & -a_{32} & 2a_{33} + 2a_{11}
			\end{bmatrix}.
		\end{align*}
		One can verify that $\phi$ preserves all $3\times3$ PSD matrices. However, $\phi$ fails to preserve copositivity; in particular, it does not preserve copositive matrices with zero diagonal entries and positive off-diagonal entries.
	\end{example}
	

	\subsection{Sign regular matrices}
	
	Let $A\in\mathbb{R}^{m\times n}$. We say that $A$ is \textit{strictly sign regular of order $k$} (SSR$_k$) if there exists a sequence of signs $\epsilon_i\in\{\pm1\}$ such that every $i\times i$ minor of $A$ has sign $\epsilon_i$ for all $i\in[k]$. If zero minors are also permitted, then $A$ is called \textit{sign regular of order $k$} (SR$_k$). When $k=\min\{m,n\}$, then the matrix $A$ is correspondingly said to be \textit{strictly sign regular} (SSR) and \textit{sign regular} (SR). Moreover, for an SSR (respectively, SR) matrix $A$, if $\epsilon_i=1$ for all $i\in[\min\{m,n\}]$, then $A$ is called \textit{totally positive} (TP) (respectively, \textit{totally nonnegative} (TN)). These matrix classes have been extensively studied across diverse areas of mathematics, e.g., analysis, algebra, combinatorics, matrix theory, probability and statistics, etc. For a detailed discussion on these matrix classes, see the classical 1941 book by Gantmacher--Krein~\cite{GK50} (for SSR and SR matrices), and the monograph by Pinkus~\cite{pinkus} (for TP and TN matrices). See also Karlin's book~\cite{Karlin64} for a general framework based on kernels.
	
%
%
The classes of SSR, SR, TP, and TN matrices are closed under usual matrix multiplication. This is an immediate consequence of the Cauchy--Binet formula. We now present some examples of TP/TN matrices.
    \begin{example}
    	\begin{itemize}
%
%
    		\item[(1)] The \textit{generalized Vandermonde matrix} $V=(x_j^{\alpha_k})_{j,k=1}^{m,n}$ is totally positive for real numbers $0<x_1<\cdots<x_m$ and $\alpha_1<\cdots <\alpha_n$.
    		
    		\item[(2)] For $\sigma>0$ and real numbers $x_1<\cdots <x_n$, $y_1<\cdots<y_n$, the matrix $(e^{-\sigma(x_i-y_j)^2})^n_{i,j=1}$ is totally positive.
    		
    		
    		\item[(3)] The \textit{Toeplitz cosine matrix} $C(\theta):= \cos((j-k)\theta)^n_{j,k=1}$ is totally nonnegative for $\theta\in \left[0,{\textstyle \frac{\pi}{2(n-1)}}\right]$, with $n\geq2$.
    		
    		\item[(4)] The \textit{Cauchy matrix} $A = \left(\frac{1}{x_i+y_j}\right)_{i,j=1}^n$ is totally positive for real numbers $0<x_1<\cdots<x_n$ and $0<y_1<\cdots<y_n$.
    	\end{itemize}
    \end{example}
    
    There are several other well-known classes of examples of TP and TN matrices, including Green matrices, Jacobi matrices, Hankel matrices; see~\cite[Chapter 4]{pinkus} for more examples. Recently, Choudhury--Yadav~\cite{CY-SSR_Construction24} provided an algorithm to construct explicit examples of SSR matrices of any given size and sign pattern.

   We next discuss onto linear preservers of the sets of SSR and SR matrices -- both for fixed sign patterns and for all sign patterns -- which were recently characterized by Choudhury--Yadav~\cite{CY-LPP24}. To present their results, we first introduce the relevant definitions and notations.
	
	\begin{defn}
		\begin{itemize}
			\item[(1)] The \textit{sign pattern} of an SSR/SR matrix $A\in\mathbb{R}^{m\times n}$ is the ordered tuple $\epsilon=(\epsilon_1,\ldots,\epsilon_{\min\{m,n\}})$, where $\epsilon_i\in\{\pm1\}$ denotes the sign of all $i\times i$ minors of $A$.
			
			\item[(2)] We denote by SSR$(\epsilon)$ and SR$(\epsilon)$ the classes of $m\times n$ SSR and SR matrices, respectively, whose sign pattern is given by $\epsilon$. Similarly, one can define SSR$_k(\epsilon)$ and SR$_k(\epsilon)$ for $k\in[\min\{m,n\}]$.
			
			\item[(3)] An \textit{exchange matrix} is an anti-diagonal matrix whose all anti-diagonal elements are $1$. We denote an $n\times n$ exchange matrix by
			\begin{align*}
				P_n:=\begin{bmatrix}
					0 & \cdots & 1 \\
					\vdots & \iddots & \vdots \\
					1 & \cdots & 0 \\
				\end{bmatrix} \in \{ 0, 1 \}^{n\times n}.
			\end{align*}
		\end{itemize}
	\end{defn}
	
	 We also recall here a classical 1941 result of Gantmacher--Krein~\cite{GK50}.
	
	\begin{theorem}\cite{GK50}\label{G-K}
		Let $m,n\geq k\geq1$ be integers. Given a sign pattern $\epsilon=(\epsilon_1,\ldots,\epsilon_k)$, the set of $m\times n$ SSR$_k(\epsilon)$ matrices is dense in the set of $m\times n$ SR$_k(\epsilon)$ matrices.
	\end{theorem}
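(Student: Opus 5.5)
The plan is to combine smoothing by totally positive matrices close to the identity with the Cauchy--Binet formula, and to treat rank deficiency separately. Fix $A\in\mathrm{SR}_k(\epsilon)$ of size $m\times n$. For $\sigma>0$ let
\begin{align*}
G_\sigma^{(m)}:=\left(e^{-\sigma(i-j)^2}\right)_{i,j=1}^m, \qquad G_\sigma^{(n)}:=\left(e^{-\sigma(i-j)^2}\right)_{i,j=1}^n .
\end{align*}
These are totally positive by the Gaussian-kernel example above, and $G_\sigma\to I$ as $\sigma\to\infty$. Set $A_\sigma:=G_\sigma^{(m)}AG_\sigma^{(n)}$, so that $A_\sigma\to A$. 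For index sets $I,J$ of size $p\le k$, Cauchy--Binet gives
\begin{align*}
\det A_\sigma[I,J]=\sum_{K,L}\det G_\sigma^{(m)}[I,K]\,\det A[K,L]\,\det G_\sigma^{(n)}[L,J].
\end{align*}
Every factor coming from $G_\sigma$ is strictly positive, and every $\det A[K,L]$ is either $0$ or has sign $\epsilon_p$. Hence $\det A_\sigma[I,J]$ has sign $\epsilon_p$ strictly as soon as $A$ has at least one nonzero $p\times p$ minor, that is, as soon as $p\le\rk A$. In particular, if $\rk A\ge k$ then $A_\sigma\in\mathrm{SSR}_k(\epsilon)$ for every $\sigma$, and this case is finished.

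The remaining case is $r:=\rk A<k$. By the previous step we may replace $A$ by $A_\sigma$ and assume $A\in\mathrm{SSR}_r(\epsilon)$ with $\rk A=r$; all minors of order greater than $r$ then vanish. I would raise the rank one step at a time. Consider a rank-one update $A+t\,\mathbf{x}\mathbf{y}^T$ with $t>0$ small. Minors of order at most $r$ change only by $O(t)$, so they keep their strict signs. A minor of order $r+1$ is affine in $t$, because $A$ has rank $r$. Its constant term is $0$, and its coefficient of $t$ is, up to sign, the bordered determinant
\begin{align*}
-\det\begin{bmatrix} A[I,J] & \mathbf{x}_I\\ \mathbf{y}_J^T & 0\end{bmatrix}.
\end{align*}
The aim is therefore to choose $\mathbf{x},\mathbf{y}$ so that every such bordered determinant with $|I|=|J|=r+1$ has sign $\epsilon_{r+1}$. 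After that, one more $G_\sigma$-smoothing keeps everything strict up to order $r+1$. Iterating until the rank reaches $k$ produces an $\mathrm{SSR}_k(\epsilon)$ matrix arbitrarily close to $A$.

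The main obstacle is the choice of $\mathbf{x}$ and $\mathbf{y}$, especially the extreme case $A=0$, where density amounts to the existence of arbitrarily small $\mathrm{SSR}_k(\epsilon)$ matrices of every prescribed sign pattern. I would first try to write $A=CD$ with $C\in\mathbb{R}^{m\times r}$ and $D\in\mathbb{R}^{r\times n}$ of full rank, chosen via Cauchy--Binet so that their maximal minors have constant signs. I would then take $\mathbf{x}$ to be a column appended to $C$ and $\mathbf{y}^T$ a row appended to $D$, for instance Vandermonde- or Gaussian-type vectors built by extending the strictly sign regular structure of $C$ and $D$. This would make $[C\mid\mathbf{x}]$ and $[D;\mathbf{y}^T]$ have maximal minors of controllable sign, adjusted by a global sign $\pm t$ so that the product acquires $\epsilon_{r+1}$. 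If this extension is awkward, a fallback is to invoke an explicit $\mathrm{SSR}_k(\epsilon)$ matrix $B$ of the required pattern (as constructed by Choudhury--Yadav) and mimic the argument with $\mathbf{x},\mathbf{y}$ read off from the columns and rows of $B$.
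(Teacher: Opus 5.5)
The survey gives no proof of this statement (it only cites Gantmacher--Krein), so your argument has to stand on its own. Its architecture is sound. The Gaussian-smoothing step via Cauchy--Binet is correct and settles the case $\rk A\ge k$. The reduction of the case $r=\rk A<k$ to a rank-one update $A+t\mathbf{x}\mathbf{y}^T$ is also correct: minors of order $\le r$ keep their strict signs for small $|t|$, and minors of order $r+1$ equal $t\,\mathbf{y}_J^T\operatorname{adj}(A[I,J])\mathbf{x}_I$. Minors of order $\ge r+2$ vanish because the rank is at most $r+1$. You should state this last point, since it is what keeps the updated matrix inside $\mathrm{SR}_k(\epsilon)$.

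\textbf{The gap.} The one step that carries the rank-deficient case, the existence of $\mathbf{x},\mathbf{y}$, is left open. You label it the main obstacle and offer only heuristics. The fallback of reading $\mathbf{x},\mathbf{y}$ off an external $\mathrm{SSR}$ matrix $B$ comes with no reason why the bordered determinants would then have constant sign. You also treat $A=0$ as needing an outside existence theorem. As written, the proof is incomplete.

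\textbf{How to close it.} Use the smoothing you already have and ask only for weak signs. Take $\mathbf{x}=\mathbf{y}=\mathbf{e}^{(1)}$, so the update is $tE_{11}$ with $E_{11}:=\mathbf{e}^{(1)}(\mathbf{e}^{(1)})^T$, and choose $\operatorname{sgn}t=\epsilon_r\epsilon_{r+1}$, with $\epsilon_0:=1$.
\begin{itemize}
\item For $|I|=|J|=r+1$, the minor $\det(A+tE_{11})[I,J]$ equals $t\det A[I\setminus\{1\},J\setminus\{1\}]$ if $1\in I\cap J$, and $0$ otherwise. So every $(r+1)\times(r+1)$ minor is $0$ or has sign $\epsilon_{r+1}$.
\item The choice $I=J=[r+1]$ gives a nonzero one; it is available because $r+1\le k\le\min\{m,n\}$.
\item One further $G_\sigma$-smoothing then makes all minors of order $\le r+1$ strict, by your own Cauchy--Binet step.
\end{itemize}
For $r=0$ this simply starts from $tE_{11}$. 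So the existence of $\mathrm{SSR}_k(\epsilon)$ matrices of every sign pattern comes out as a by-product, not an input.

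\textbf{Your factorization route also works, once verified.}
\begin{itemize}
\item Any full-rank factorization $A=CD$ automatically has all maximal minors of $C$, and all of $D$, nonzero of constant sign, by Cauchy--Binet.
\item The $(r+1)$-minors of $A+t\mathbf{x}\mathbf{y}^T$ factor as $t\det[C\,|\,\mathbf{x}][I,:]$ times the corresponding maximal minor of $D$ with the row $\mathbf{y}^T$ appended.
\item Your Vandermonde-type guess $x_i=\lambda^i$, $y_j=\lambda^j$ with $\lambda$ large makes both factors strictly signed. The term with the largest index dominates the Laplace expansion along the appended column (respectively row).
\end{itemize}
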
 
	Lemma~\ref{lemma_preserver}, together with Theorem~\ref{G-K}, will be useful in classifying linear preservers of SSR and TP matrices. We now state a result that characterizes the onto linear preservers of the class of (strictly) sign regular matrices allowing all sign patterns.
	
	\begin{theorem}\cite[Theorem A]{CY-LPP24} \label{Theorem_SR}
		Suppose $m,n \geq 2$ are integers with $\max\{m,n\}\geq3$, and let SR and SR$_2$ denote the classes of all $m\times n$ sign regular and sign regular matrices of order 2, respectively. For a linear operator $\mathcal{L}:\mathbb{R}^{m\times n}\to\mathbb{R}^{m\times n}$, the following statements are equivalent.
		\begin{itemize}
			\item[(1)] $\mathcal{L}(SR)=SR$.
			\item[(2)] $\mathcal{L}(SR_2)=SR_2$.
			\item[(3)] $\mathcal{L}$ is a composition of one or more of the following types of transformations:
			\begin{itemize}
				\item[(a)] $A\mapsto FAE$, where $F\in\mathcal{D}_{\scriptscriptstyle{>0}}(m)$ and $E\in\mathcal{D}_{\scriptscriptstyle{>0}}(n)$;
				\item[(b)] $A\mapsto -A$;
				\item[(c)] $A\mapsto P_mA$, in which $P_m$ is an exchange matrix;
				\item[(d)] $A\mapsto AP_n$;  and
				\item[(e)] $A\mapsto A^T$, provided $m=n$.
			\end{itemize}
		\end{itemize}
		Moreover, the theorem is also true if SR is replaced by SSR in parts (1) and (2).
	\end{theorem}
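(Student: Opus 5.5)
The plan is to prove (3)$\Rightarrow$(1),(2) directly and then (2)$\Rightarrow$(3) and (1)$\Rightarrow$(3) by a common structural argument. For (3)$\Rightarrow$(1),(2) I would simply check each of the five generators. Positive diagonal scaling multiplies every $k\times k$ minor by a positive number. The map $A\mapsto -A$ multiplies $k\times k$ minors by $(-1)^k$. Reversing the row order (or the column order) multiplies every $k\times k$ minor by the sign of the reversal permutation on $k$ elements, $(-1)^{k(k-1)/2}$, which is independent of the chosen rows and columns. Transposition leaves minors unchanged. Each generator therefore sends SR (resp. SR$_2$, SSR, SSR$_2$) onto itself, since the inverse of each generator is of the same type.

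For the converse I start from $\mathcal{L}(\mathrm{SR}_2)=\mathrm{SR}_2$. Since $\mathrm{SR}_2$ contains the matrix units $E_{ij}$, which span $\mathbb{R}^{m\times n}$, $\mathcal{L}$ is invertible and $\mathcal{L}^{-1}$ is also an onto preserver. For the SSR versions I would first pass to closures using Lemma~\ref{lemma_preserver} and Theorem~\ref{G-K}; these give $\overline{\mathrm{SSR}}=\mathrm{SR}$ over all sign patterns, and similarly for order $2$. This reduces everything to the SR and SR$_2$ statements. The core step is to show that $\mathcal{L}$ maps each $E_{ij}$ to a nonzero multiple of some $E_{\sigma(i,j)}$. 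My first attempt would characterize rank-one matrices with a single nonzero entry intrinsically inside $\mathrm{SR}_2$, via a maximality property of the form ``$A\in\mathrm{SR}_2$ and $A+B\in\mathrm{SR}_2$ for all $B$ in a large set''. The concrete property I would try is that $E_{ij}$ is exactly the kind of matrix $X$ for which the set $\{Y : Y+tX\in\mathrm{SR}_2\ \forall t\in\mathbb{R}\}$ is largest, measured by dimension or by containing many sign-regular matrices. An alternative is to track which lines $\mathbb{R}X$ lie entirely in $\mathrm{SR}_2$ together with suitable perturbations. Because $\mathcal{L}$ and $\mathcal{L}^{-1}$ preserve such purely linear-algebraic properties of the set, $\mathcal{L}$ permutes the lines $\mathbb{R}E_{ij}$.

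Next I would pin down the permutation $\sigma$. Two units $E_{ij},E_{kl}$ in the same row or column combine into sums $E_{ij}\pm E_{kl}$ that are always SR$_2$, because every $2\times 2$ minor vanishes. By contrast, for $i\ne k$ and $j\ne l$ the sum $E_{ij}+tE_{kl}$ has a nonzero $2\times2$ minor, and its sign flips with the sign of $t$ and with the relative position. The analogous sums with a third entry force consistency of signs. So $\sigma$ must preserve the relation of lying in a common row or column. A bijection of the $m\times n$ grid preserving this relation is a product of a row permutation and a column permutation, possibly composed with a transpose when $m=n$. Using matrices with three or more nonzero entries in a row, with sign constraints on the $2\times2$ minors, I would then show that the row and column permutations preserve or reverse order; this is where $\max\{m,n\}\geq 3$ enters. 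That yields the exchange maps $P_m$ and $P_n$. Finally, the scalars $c_{ij}$ with $\mathcal{L}(E_{ij})=c_{ij}E_{\sigma(i,j)}$ must satisfy $c_{ij}c_{kl}c_{il}c_{kj}>0$-type relations, obtained by testing all-ones submatrices and rank-one positive matrices. These relations say that $(c_{ij})$ is, up to a global sign, a positive rank-one matrix $f e^T$, which gives the diagonal map (a) and the map (b).

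The main obstacle is the first structural step: proving from $\mathcal{L}(\mathrm{SR}_2)=\mathrm{SR}_2$ alone that $\mathcal{L}$ sends matrix units to multiples of matrix units. $\mathrm{SR}_2$ is neither convex nor a cone in the usual sense, because it is a union over sign patterns, so extreme-ray arguments are unavailable. I would attack this by working in a small dense region, namely near a TP$_2$ matrix $A_0$. Locally $\mathrm{SR}_2$ is the convex-like region $\mathrm{TN}_2$, and $\mathcal{L}$ must map a neighbourhood of $A_0$ in one sign-pattern component into a single component. From this I would derive that $\mathcal{L}$ preserves the zero/nonzero structure of the boundary faces $\{a_{ij}=0\}$ and hence the matrix units. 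For the SR version (1), the same argument applies once I observe that $\mathcal{L}$ preserving SR forces, by the density and the matrix-unit step, the same restrictions as above. Statement (1) then follows from the classification, since the listed maps preserve SR.
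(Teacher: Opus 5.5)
The parts of your proposal that do work are the check of (3)$\Rightarrow$(1),(2), the invertibility of $\mathcal{L}$, and the reduction of the SSR versions to the SR versions via Lemma~\ref{lemma_preserver} and Theorem~\ref{G-K}. The gap is the step you call the main obstacle: nothing in your proposal shows that $\mathcal{L}$ sends each $E_{ij}$ to a multiple of a matrix unit, and the tools you suggest cannot do it.

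\emph{Why your tools fail.} The set $\{Y : Y+tX\in\mathrm{SR}_2\ \forall t\in\mathbb{R}\}$ equals the line $\mathbb{R}X$ for \emph{every} nonzero $X\in\mathrm{SR}_2$. Letting $t\to\pm\infty$ and using $\mathrm{SR}_2\subseteq\mathrm{SR}_1$ forces $Y$ to vanish off the support of $X$. Then the entries $y_p+tx_p$ keep a common weak sign for all $t$ only if $Y\in\mathbb{R}X$. So this invariant distinguishes nothing. Working near a matrix in $\mathrm{TP}_2=\mathrm{SSR}_2(1,1)$ gives no information either: $\mathrm{TP}_2$ is open, so $\mathrm{SR}_2$ is a full neighbourhood there. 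Moreover $\mathrm{TN}_2=\mathrm{SR}_2(1,1)$ is not convex, since $E_{12},E_{21}\in\mathrm{TN}_2$ but $E_{12}+E_{21}\notin\mathrm{TN}_2$. The passage to ``boundary faces'' is only asserted, and your remark that for (1) ``the same argument applies'' inherits this gap.

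\emph{One way to close it.} A matrix in $\mathrm{SR}$ or $\mathrm{SR}_2$ with a zero entry can be perturbed out of $\mathrm{SR}_1$, so both sets have interior inside $\mathbb{R}^{m\times n}_{>0}\cup(-\mathbb{R}^{m\times n}_{>0})$. $\mathrm{TP}_2$ is open and connected: entrywise $\log$ maps it onto the convex set of arrays with $f_{ij}+f_{kl}>f_{il}+f_{kj}$ for $i<k$, $j<l$. Hence $\mathcal{L}(\mathrm{TP}_2)$, an open connected subset of $\mathrm{SR}_2$, lies in one of $\pm\mathbb{R}^{m\times n}_{>0}$. By Theorem~\ref{G-K}, $\mathcal{L}(\mathrm{TN}_2)\subseteq\pm\mathbb{R}^{m\times n}_{\geq0}$. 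Since $\mathrm{conv}(\mathrm{TN}_2)=\mathbb{R}^{m\times n}_{\geq0}$ (it contains every $E_{ij}$), $\mathcal{L}$ maps the orthant into $\pm$ itself. Applying the same to $\mathcal{L}^{-1}$ gives $\mathcal{L}(\mathbb{R}^{m\times n}_{\geq0})=\pm\mathbb{R}^{m\times n}_{\geq0}$. So $\mathcal{L}$ permutes the extreme rays $\mathbb{R}_{\geq0}E_{ij}$ up to a global sign. For (1) the same argument works with TP (open, connected, dense in TN) in place of $\mathrm{TP}_2$.

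Your later steps also rest on incorrect tests.
\begin{itemize}
\item $E_{ij}-E_{kl}$ is never in $\mathrm{SR}_2$, because its nonzero entries have opposite signs.
\item $E_{ij}+tE_{kl}$ with $t>0$ lies in $\mathrm{SR}_2$ for every relative position of $(i,j)$ and $(k,l)$, since it has at most one nonzero $2\times2$ minor. So two-entry sums cannot detect a shared row or column.
\item Sign conditions such as $c_{ij}c_{kl}c_{il}c_{kj}>0$ can never force $(c_{ij})$ to have rank one. You need the equalities $c_{ij}c_{kl}=c_{il}c_{kj}$.
\end{itemize}
Both conclusions genuinely require $\max\{m,n\}\geq3$, and your arguments never use it. For $m=n=2$, Theorem~\ref{thrmSR_1} shows that every entrywise positive Hadamard multiplier preserves $\mathrm{SR}=\mathrm{SR}_2$. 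So does the swap $a_{21}\leftrightarrow a_{22}$, which destroys the row/column relation.

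Determining the position permutation and the weights therefore needs configurations with at least three nonzero entries involving a third row or column. For instance, when $n\geq3$, $E_{11}+E_{12}+E_{22}\in\mathrm{SR}_2$ but $E_{11}+E_{13}+E_{22}\notin\mathrm{SR}_2$. Your sketch does not carry this out.
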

	
	Note that when $m=n$, the transformation (3)(d) is not required, given (c) and (e). Moreover, it is noteworthy that the classification of linear preservers of $\mathrm{SR}$ reduces to the study of linear preservers of $\mathrm{SR}_2$. For dimensions not covered in Theorem~\ref{Theorem_SR}, Choudhury--Yadav~\cite{CY-LPP24} made the following observations.
	
	\begin{rem}
		If $m \neq n$ and $\min\{m,n\}=1$ in Theorem~\ref{Theorem_SR}, the problem reduces to characterizing linear $\mathrm{SR}_1$-preservers. In this case, Theorem~\ref{Theorem_SR} remains valid, but the second statement now requires $\mathcal{L} \in P (\mathrm{SR}_1)$ in place of  $\mathcal{L} \in P(\mathrm{SR}_2)$. Moreover, in the third statement, $P_m, P_n$ can be any permutation matrices instead of exchange matrices. If $m=n=2$, there is a unique $2\times 2$ minor, and the set $\mathcal{S}$ consists of SR matrices of all sign patterns in Theorem~\ref{Theorem_SR}. Consequently, the problem reduces to classifying linear $\mathrm{SR}_1$-preservers. The authors addresses this latter case in the following theorem.
	\end{rem}
	
	\begin{theorem}\cite[Theorem B]{CY-LPP24}\label{thrmSR_1}
		Given a linear map $\mathcal{L}:\mathbb{R}^{2\times2}\to\mathbb{R}^{2\times 2}$, the following are equivalent.
		\begin{itemize}
			\item[(1)] $\mathcal{L}(SR)=SR$.
			\item[(2)] $\mathcal{L}(SR_1)=SR_1$.
			\item[(3)] $\mathcal{L}$ is a composition of one or more of the following types of transformations:
			\begin{itemize}
				\item[(a)] $A\mapsto H\circ A$, where $H\in\mathbb{R}^{2\times2}$ is an entrywise positive matrix;
				\item[(b)] $A\mapsto -A$;
				\item[(c)] $A\mapsto P_2A$, in which $P_2$ is an exchange matrix;
				\item[(d)] $A\mapsto A^T$; and
				\item[(e)] $\begin{bmatrix}
					a_{11} & a_{12} \\
					a_{21} & a_{22}
				\end{bmatrix}\mapsto\begin{bmatrix}
					a_{11} & a_{12} \\
					a_{22} & a_{21}
				\end{bmatrix}$.
			\end{itemize}
		\end{itemize}
		Moreover, the theorem is also true if SR$_1$ is replaced by SSR$_1$ in part (2).
	\end{theorem}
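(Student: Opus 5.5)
We prove (3)$\Rightarrow$(1), (3)$\Rightarrow$(2), then (1)$\Rightarrow$(2) and (2)$\Rightarrow$(3).

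\textbf{Reformulation.} For $2\times 2$ matrices, SR$_1$ is the set of matrices whose four entries are all $\geq 0$ or all $\leq 0$. SR is the subset of SR$_1$ on which the determinant has a fixed sign. Since there is only one $2\times2$ minor and both signs $\epsilon_2=\pm1$ are allowed, every matrix in SR$_1$ is already in SR. Hence SR $=$ SR$_1 = \mathbb{R}^{2\times 2}_{\geq0}\cup(-\mathbb{R}^{2\times2}_{\geq0})$, and (1)$\Leftrightarrow$(2) is immediate. Likewise SSR$_1$ is the set of entrywise positive or entrywise negative matrices; the SSR condition would also force $\det\neq0$. For the ``moreover'' claim about SSR$_1$, Lemma~\ref{lemma_preserver} passes from SSR$_1$ to its closure SR$_1$ (this is the $k=1$ case of Theorem~\ref{G-K}). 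Conversely, the maps in (3) visibly preserve the open cones $\pm\mathbb{R}^{2\times 2}_{>0}$.

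\textbf{(3)$\Rightarrow$(2).} Each map in (a)--(e) is either a positive entrywise scaling, a sign change, or a permutation of the four entries. Each of these maps $\mathbb{R}^{2\times2}_{\geq0}$ onto itself and commutes with negation, so it maps SR$_1$ onto SR$_1$.

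\textbf{(2)$\Rightarrow$(3).} Identify $\mathbb{R}^{2\times2}$ with $\mathbb{R}^4$ via the four entries; then SR$_1 = K\cup(-K)$ with $K=\mathbb{R}^4_{\geq0}$.

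\emph{Step 1: $\mathcal{L}$ is invertible.} SR$_1$ contains the standard basis, so $\mathcal{L}(\mathbb{R}^4)\supseteq\mathrm{span}\,\mathcal{L}(\mathrm{SR}_1)=\mathbb{R}^4$, as in the proof of Corollary~\ref{Cor_COP_onto}.

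\emph{Step 2: $\mathcal{L}(K)=K$ or $\mathcal{L}(K)=-K$.} $K\setminus\{0\}$ is connected and its image lies in $(K\cup -K)\setminus\{0\}$. The sets $K\setminus\{0\}$ and $-K\setminus\{0\}$ are disjoint and relatively closed in this union, so the image lies entirely in one of them. Composing with (b) if needed, we may assume $\mathcal{L}(K)\subseteq K$. Applying the same argument to $\mathcal{L}^{-1}$, which also preserves SR$_1$, gives $\mathcal{L}^{-1}(K)\subseteq K$: indeed $\mathcal{L}^{-1}(K)$ lies in $K$ or in $-K$, and the latter would give $K\subseteq\mathcal{L}(-K)=-\mathcal{L}(K)\subseteq -K$, a contradiction. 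Hence $\mathcal{L}(K)=K$.

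\emph{Step 3: $\mathcal{L}$ is a positive monomial map.} A linear automorphism of the orthant $\mathbb{R}^4_{\geq0}$ permutes its extreme rays $\mathbb{R}_{\geq0}E_{ij}$. So $\mathcal{L}(E_{ij})=h_{\sigma(ij)}E_{\sigma(ij)}$ with $h>0$ and $\sigma$ a permutation of the four positions. In other words, $\mathcal{L}$ is an entrywise permutation $\sigma\in S_4$ followed by a Hadamard scaling (a).

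\emph{Step 4: decompose $\sigma$.} It remains to show that the permutations induced by (c), (d), (e) generate all of $S_4$ acting on positions $\{11,12,21,22\}$. Here (c) swaps the two rows, (d) swaps $12\leftrightarrow21$, and (e) swaps $21\leftrightarrow22$. The transposition $(12\ 21)$ and the transposition $(21\ 22)$ generate the symmetric group on $\{12,21,22\}$. Conjugating by the row swap $(11\ 21)(12\ 22)$ produces a transposition involving $11$. Together these generate $S_4$. Finally, a Hadamard scaling composed with a permutation can be rewritten as a permutation composed with a rescaled Hadamard scaling, so $\mathcal{L}$ is a composition of maps of types (a)--(e).

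\textbf{Main obstacle.} The key step is Step 2, showing that $\mathcal{L}$ cannot send part of $K$ into $-K$. Connectedness handles this cleanly. After that, everything is the standard fact that automorphisms of an orthant are positive monomial maps, together with a small group-generation check.
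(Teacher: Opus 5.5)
Your proof is correct and complete. The survey does not prove this theorem; it only quotes it from \cite{CY-LPP24}, so there is no in-paper argument to compare yours against.

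The tools you use are the ones the survey uses for its own short derivations:
\begin{itemize}
\item invertibility of $\mathcal{L}$ because $\mathrm{SR}_1$ spans $\mathbb{R}^{2\times 2}$, as in Corollary~\ref{Cor_COP_onto};
\item Lemma~\ref{lemma_preserver} to pass from $\mathrm{SSR}_1$ to its closure $\mathrm{SR}_1$, as in Corollaries~\ref{Cor_PD} and~\ref{Cor_SCOP}.
\end{itemize}
The rest of your argument holds up. The connectedness argument in Step 2 correctly rules out $\mathcal{L}$ sending part of $K$ into $K$ and part into $-K$. It relies on injectivity so that $K\setminus\{0\}$ lands in $\mathrm{SR}_1\setminus\{0\}$, which you established in Step 1. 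The reduction to linear automorphisms of the orthant $\mathbb{R}^4_{\geq 0}$ is valid. If you want Step 3 fully self-contained, note that $\mathcal{L}$ and $\mathcal{L}^{-1}$ are then both entrywise nonnegative $4\times 4$ matrices, and a nonnegative matrix with nonnegative inverse is monomial. The generation check in Step 4 is also right: the transpositions $(11\ 12)$, $(12\ 21)$, $(21\ 22)$ form a path and generate $S_4$.

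There are two small wording issues. First, in (3)$\Rightarrow$(2) you say each listed map sends $\mathbb{R}^{2\times 2}_{\geq 0}$ onto itself. The map (b) sends it onto $-\mathbb{R}^{2\times 2}_{\geq 0}$ instead, though it still maps $K\cup(-K)$ onto itself, so the conclusion stands. Second, for the $\mathrm{SSR}_1$ converse you need ``onto'' rather than just ``preserve''. This holds because each map in (a)--(e) is invertible with inverse of the same type.
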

	
	Note that the transformation (e) in Theorem~\ref{thrmSR_1} need not preserve the invertibility of $2\times 2$ matrices. Thus, Choudhury--Yadav classified the onto linear preservers for the set of $2\times 2$ SSR matrices in the theorem below.
	
	\begin{theorem}\cite[Theorem 2.17]{CY-LPP24}\label{Theorem_SSR2}
		Given a linear map $\mathcal{L}:\mathbb{R}^{2\times2}\to\mathbb{R}^{2\times 2}$, the following are equivalent.
		\begin{itemize}
			\item[(1)] $\mathcal{L}(SSR) = SSR$.
			\item[(2)] $\mathcal{L}$ is a composition of one or more of the following types of transformations:
			\begin{itemize}
				\item[(a)] $A\mapsto FAE$, where $F,E\in\mathcal{D}_{\scriptscriptstyle{>0}}(2)$;
				\item[(b)] $A\mapsto -A$;
				\item[(c)] $A\mapsto P_2A$, in which $P_2$ is an exchange matrix; and
				\item[(d)] $A\mapsto A^T$.
			\end{itemize}
		\end{itemize}
	\end{theorem}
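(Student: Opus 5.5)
We prove (2)$\Rightarrow$(1) by checking the generators; (1)$\Rightarrow$(2) will come from Theorem~\ref{thrmSR_1} via density, followed by an invertibility argument that rules out the extra map (e).

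\textbf{The generators preserve SSR.} A $2\times 2$ matrix $A$ is SSR exactly when its four entries share a common sign and $\det A\neq 0$.
\begin{itemize}
\item[(a)] $A\mapsto FAE$ with $F,E\in\mathcal{D}_{\scriptscriptstyle{>0}}(2)$ keeps the entry signs. It multiplies the determinant by $\det F\det E>0$.
\item[(b)] $A\mapsto -A$ flips all entry signs together and leaves $\det A$ unchanged.
\item[(c)] $A\mapsto P_2A$ permutes the entries and negates the determinant.
\item[(d)] $A\mapsto A^T$ permutes the entries and leaves the determinant unchanged.
\end{itemize}
Each generator is invertible, and its inverse is a composition of generators of the same kind. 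So each one maps SSR onto SSR, and hence so does any composition.

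\textbf{From SSR-preservers to SR-preservers.} Suppose $\mathcal{L}(\mathrm{SSR})=\mathrm{SSR}$. By Theorem~\ref{G-K}, applied to each of the four sign patterns $\epsilon=(\epsilon_1,\epsilon_2)$ and taking the union, $\overline{\mathrm{SSR}}=\mathrm{SR}$. Lemma~\ref{lemma_preserver} then gives $\mathcal{L}(\mathrm{SR})=\mathrm{SR}$. Theorem~\ref{thrmSR_1} now says $\mathcal{L}$ is a composition of maps of types (a)--(e) listed there.

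\textbf{Normal form.} Call the map (e) of Theorem~\ref{thrmSR_1}, which swaps $a_{21}$ and $a_{22}$, $\tau$. Every map in that list is a permutation of entry positions followed by an entrywise positive scaling and possibly a sign change. So we may write $\mathcal{L}(A)=\pm\, H\circ A_\sigma$, where $H$ is entrywise positive and $A_\sigma$ permutes the four entries by some $\sigma$. Here $\sigma$ lies in the group generated by the row swap, the transpose and $\tau$. This group is all of $S_4$, since it contains a 4-cycle-generating set. The task is to show that SSR-invertibility forces $\sigma$ into the dihedral subgroup $D_4$ generated by the row swap and the transpose. $D_4$ is exactly the set of position permutations preserving the partition $\{\{11,22\},\{12,21\}\}$ into diagonal and antidiagonal.

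\textbf{Main step: ruling out $\sigma\notin D_4$.} If $\sigma$ does not preserve that partition, then some diagonal pair and some antidiagonal pair both land on a single diagonal pair. Concretely, $\sigma$ must send $\{11,22\}$ to a pair that mixes one diagonal and one antidiagonal position.
\begin{itemize}
\item Take a positive matrix with $\det\neq 0$ whose entries are tuned so that the image has $h_{11}b_{11}h_{22}b_{22}=h_{12}b_{12}h_{21}b_{21}$. This means choosing the four positive entries of $A$ so that a product of two entries with weights equals a product of the other two. A non-diagonal–antidiagonal pairing makes this a condition not equivalent to $a_{11}a_{22}=a_{12}a_{21}$, so it can be met while $\det A\neq 0$.
\item For example, fix three entries and solve for the fourth, a positive value. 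Then check that $\det A\neq0$ for a generic choice of the three fixed entries.
\item This produces an SSR matrix with singular image, contradicting $\mathcal{L}(\mathrm{SSR})\subseteq\mathrm{SSR}$.
\end{itemize}
This case analysis is the crux: one must verify the two conditions really are independent for every $\sigma\notin D_4$.

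\textbf{Conclusion.} Once $\sigma\in D_4$, $\mathcal{L}(A)=\pm H\circ A_\sigma$ with $A_\sigma$ produced by maps (c) and (d). It remains to show $H$ is a rank-one matrix, so that $H\circ A=FAE$. For this, apply $\mathcal{L}$ to SSR matrices with $\det$ near zero. If $h_{11}h_{22}\neq h_{12}h_{21}$, pick a positive $A$ with $a_{11}a_{22}/(a_{12}a_{21})$ equal to $h_{12}h_{21}/(h_{11}h_{22})\neq 1$; this $A$ is SSR but its image is singular. Hence $h_{11}h_{22}=h_{12}h_{21}$, so $H=\mathbf{f}\mathbf{e}^T$ with positive vectors, and $H\circ A=\diag(\mathbf{f})A\diag(\mathbf{e})$. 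This is type (a), completing (1)$\Rightarrow$(2).
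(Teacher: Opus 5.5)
Your proof is correct and follows the route the paper indicates. You pass from SSR to SR via Theorem~\ref{G-K} and Lemma~\ref{lemma_preserver}, invoke Theorem~\ref{thrmSR_1}, and then use that SSR matrices must have invertible images to exclude the map (e) and multipliers $H$ that are not rank one; your normal form $\mathcal{L}(A)=\pm H\circ A_\sigma$ is the right way to handle arbitrary compositions. The step you flag as the crux is routine: for $\sigma\notin D_4$ one gets $\det\mathcal{L}(A)=\pm(c_1a_pa_q-c_2a_ra_s)$ with $c_1,c_2>0$ and $\{\{p,q\},\{r,s\}\}$ the row or column split, and solving this for one entry leaves $\det A$ a nonzero function of the other three; also, in the last step the ratio condition should be imposed on $A_\sigma$ rather than on $A$, which is harmless since $D_4$ preserves SSR.
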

	
	Thus, the three theorems above yield a complete classification of onto linear preservers of the class of $m\times n$ SR/SSR matrices of all sign patterns for any given dimension. Next, observe that the map $A\mapsto -A$ in Theorems~\ref{Theorem_SR}, \ref{thrmSR_1}, and \ref{Theorem_SSR2} preserves the sign regularity of a matrix, although it does change its sign pattern. This observation led Choudhury--Yadav to pose the following question: for any given size $m\times n$ and sign pattern $\epsilon$, can one consider the set of all $m\times n$ SR$(\epsilon)$/SSR$(\epsilon)$ matrices and characterize their linear preservers? They affirmatively answered this question in the next theorem.
	
	\begin{theorem}\cite[Theorem C]{CY-LPP24}\label{Theorem_SR_Fixed}
		Suppose $m,n\geq2$ are integers and $\epsilon$ is a given sign pattern. Then, for a linear transformation $\mathcal{L}:\mathbb{R}^{m\times n}\to\mathbb{R}^{m\times n}$, the following statements are equivalent.
		\begin{itemize}
			\item[(1)] $\mathcal{L}(SR(\epsilon)) = SR(\epsilon)$.
			\item[(2)] $\mathcal{L}(SR_2(\epsilon)) = SR_2(\epsilon)$.
			\item[(3)] $\mathcal{L}$ is a composition of one or more of the following types of transformations:
			\begin{itemize}
				\item[(a)] $A\mapsto FAE$, where $F\in\mathcal{D}_{\scriptscriptstyle{>0}}(m)$ and $E\in\mathcal{D}_{\scriptscriptstyle{>0}}(n)$;
				\item[(b)] $A\mapsto P_mAP_n$, where $P_m, P_n$ are exchange matrices; and
				\item[(c)] $A\mapsto A^T$, provided $m=n$.
			\end{itemize}
		\end{itemize}
		Moreover, the theorem is also true if SR$(\epsilon)$ is replaced by SSR$(\epsilon)$.
	\end{theorem}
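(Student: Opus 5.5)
The implication (3)$\Rightarrow$(1),(2) is checked directly on minors. Under $A\mapsto FAE$ with $F,E$ positive diagonal, each $k\times k$ minor is multiplied by a positive product of diagonal entries. Under $A\mapsto P_mAP_n$, a $k\times k$ minor is sent to the minor on the reversed rows and reversed columns. Reversing rows multiplies it by $(-1)^{k(k-1)/2}$, reversing columns does the same, so the signs cancel. Transposition fixes all minors. Each basic map is invertible with inverse of the same type, so each maps $\mathrm{SR}(\epsilon)$ and $\mathrm{SR}_2(\epsilon)$ onto themselves, and the same holds with SSR in place of SR.

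For (1)$\Rightarrow$(3) and (2)$\Rightarrow$(3), we first reduce to the strict classes. Theorem~\ref{G-K} shows that the closure of $\mathrm{SSR}(\epsilon)$ is $\mathrm{SR}(\epsilon)$. Lemma~\ref{lemma_preserver} then shows that any onto preserver of $\mathrm{SSR}(\epsilon)$ is an onto preserver of $\mathrm{SR}(\epsilon)$. So it suffices to classify preservers of $\mathrm{SR}_2(\epsilon)$ (respectively $\mathrm{SR}(\epsilon)$). Since these sets contain a basis (every $E_{ij}$ is $\mathrm{SR}_1$ after rescaling, and its $2\times2$ minors vanish), such an $\mathcal{L}$ is invertible and $\mathcal{L}^{-1}$ is also a preserver.

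Next, $\mathcal{L}$ maps the closed cone $\mathrm{SR}_2(\epsilon)$ bijectively onto itself, so it permutes extreme rays. Up to the sign $\epsilon_1$ (multiply by $\epsilon_1$ to reduce to $\epsilon_1=1$), I would identify the matrices $E_{ij}$ as the rank-one elements that stay in the set after adding a small multiple of many other rank-one elements. This shows that $\mathcal{L}$ permutes the lines $\mathbb{R}_{>0}E_{ij}$, so $\mathcal{L}(E_{ij})=h_{ij}E_{\sigma(i,j)}$ with $h_{ij}>0$.

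The main step, and the main obstacle, is showing that the permutation $\sigma$ of positions respects rows and columns, and moreover preserves or reverses order in a coordinated way. I would first test on rank-one $2\times2$ blocks: $E_{ij}+E_{ik}$ must map to an SR$_2$ matrix for every admissible sign $\epsilon_2$. I would then test sums like $E_{ij}+E_{kl}$ placed so that the $2\times2$ minor has the wrong sign, which forces $\sigma$ to send positions in a common row to positions in a common row or a common column. Transposition covers the latter case when $m=n$, and is impossible when $m\ne n$ by counting. I would then use contiguous $2\times2$ minors with prescribed sign $\epsilon_2$ and the ordering of rows to show that the induced row map is order-preserving or order-reversing, and likewise the column map. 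Order reversal must occur in rows and columns simultaneously, since reversing only one of them flips the sign of $2\times2$ minors, which is exactly where the fixed $\epsilon_2$ enters. This yields the factor $P_mAP_n$.

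After composing with these maps, $\mathcal{L}(A)=H\circ A$ with $H$ entrywise positive. Applying this to a rank-one TP-type (or $\epsilon_2=0$ boundary) matrix such as $\mathbf{u}\mathbf{v}^T$ with positive vectors, whose $2\times2$ minors all vanish, and also to a matrix perturbed to have minors of either sign within $\mathrm{SR}_2(\epsilon)$, forces $h_{ij}h_{kl}=h_{il}h_{kj}$. Hence $H$ has rank one, $H=\mathbf{f}\mathbf{e}^T$, and $\mathcal{L}(A)=FAE$. The case $\min\{m,n\}=2$ with $m=n=2$ needs a separate small check, since there is only one $2\times2$ minor.
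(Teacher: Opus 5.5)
The survey quotes this theorem from \cite{CY-LPP24} without proof, so your plan has to be judged on its own. Several parts of it are fine: the easy direction, the passage from $\mathrm{SSR}(\epsilon)$ to $\mathrm{SR}(\epsilon)$ via Theorem~\ref{G-K} and Lemma~\ref{lemma_preserver}, invertibility, and the idea that $H$ must have rank one. The step you flag as the main one, however, fails as described.

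After normalizing $\epsilon_1=1$, whether a two-entry matrix $sE_a+tE_b$ ($s,t>0$) lies in $\mathrm{SR}_2(\epsilon)$ does not depend on $s,t$. It depends only on whether $a,b$ share a line or sit in ``diagonal'' or ``anti-diagonal'' position, and the image is again a two-entry matrix. So all your tests of this kind (the $E_{ij}+E_{ik}$ ones and the ``wrong-sign'' ones) only show that $\sigma$ preserves the set of incompatible pairs. That does not force rows into lines. Take $m=2$, $n=3$, $\epsilon=(1,1)$, and let $\sigma$ swap $(1,1)\leftrightarrow(2,3)$ and fix the other four positions. Both swapped positions are comparable in the product order with every position, so every two-entry test is respected. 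Yet $\sigma$ sends the row pair $(1,1),(1,2)$ to $(2,3),(1,2)$, which share neither a row nor a column. Moreover, no weights $h_a>0$ make $E_a\mapsto h_aE_{\sigma(a)}$ a preserver, since $E_{11}+E_{12}+E_{21}+E_{22}$ goes to a matrix with leading $2\times 2$ minor $-h_{12}h_{21}<0$. Your counting and monotonicity steps rest on this unproved claim.

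What is missing is a test whose membership depends on the coefficients. For $i<k$, $j<l$, the matrix $xE_{ij}+yE_{il}+zE_{kj}+wE_{kl}$ has rank at most $2$. It lies in $\mathrm{SR}(\epsilon)$, and in $\mathrm{SR}_2(\epsilon)$, iff $x,y,z,w\geq 0$ and $\epsilon_2(xw-yz)\geq0$.

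If the four image positions did not form a rectangle, every $2\times2$ minor of the image would be a single signed monomial. Membership for positive coefficients would then be constant, which is false. Comparing with the inequality, $\sigma$ maps rectangles onto rectangles and opposite corners to opposite corners.

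Two positions on a common line are adjacent corners of some rectangle. Hence $\sigma$ and $\sigma^{-1}$ preserve ``lying on a common line'', i.e.\ $\sigma$ is an automorphism of the rook's graph, whose maximal cliques are the rows and the columns. This sends rows to rows, or rows to columns only if $m=n$. After that, your two-entry tests do give monotonicity and simultaneous reversal, including when $m=n=2$.

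Two smaller repairs are also needed. First, $\mathrm{SR}_2(\epsilon)$ is not convex: for $\epsilon_1=\epsilon_2=1$, $E_{12}$ and $E_{21}$ lie in it but their sum does not. So ``extreme rays'' is meaningless here, and your rank-one characterization of $E_{ij}$ is not $\mathcal{L}$-invariant. Instead, note that $\mathcal{L}$ and $\mathcal{L}^{-1}$ map each $\epsilon_1E_{ij}$ into $\mathrm{SR}_2(\epsilon)\subseteq\epsilon_1\mathbb{R}^{m\times n}_{\geq0}$. Both therefore have nonnegative matrices in that basis, and so are monomial. Second, the reverse inequality $\epsilon_2(h_{il}h_{kj}-h_{ij}h_{kl})\geq0$ comes from applying your $\mathbf{u}\mathbf{v}^T$ test to $\mathcal{L}^{-1}(A)=H^{\circ(-1)}\circ A$. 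It does not come from matrices in $\mathrm{SR}_2(\epsilon)$ with minors of both signs, since no such matrices exist.
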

	This theorem reveals that it is again sufficient to study the linear preservers of $\mathrm{SR}_2(\epsilon)$ in order to characterize linear $\mathrm{SR}(\epsilon)$-preservers.
	\begin{rem}
		We finish this subsection with two remarks.
		\begin{itemize}
			\item[(1)] Choudhury--Yadav showed that the linear preservers of the classes of $m\times n$ $\mathrm{SR}$ and $\mathrm{SR}_2$ matrices (respectively, $\mathrm{SR}(\epsilon)$ and $\mathrm{SR}_2(\epsilon)$) given in Theorem~\ref{Theorem_SR} (respectively, Theorem~\ref{Theorem_SR_Fixed}) coincide with the linear preservers of the class of $m\times n$
			$\mathrm{SR}_k$ matrices (respectively, $\mathrm{SR}_k(\epsilon)$) for all $2\leq k\leq\mathrm{min}\{m,n\}$; see \cite[Remark 1.3]{CY-LPP24}.
			
			\item[(2)]  Observe that by taking $m=n$ and $\epsilon_i=1$ for each $i$, Theorem~\ref{Theorem_SR_Fixed} yields a classification of all onto linear preservers for the classes of square TP and TN matrices, which were characterized by Berman--Hershkowitz--Johnson in 1985; see~\cite[Theorem 3]{BHJ85}.
		\end{itemize}
	\end{rem}
	
	\subsection{M-matrices, Inverse M-matrices, and H-matrices}
	
	A matrix $A=(a_{ij})\in\mathbb{R}^{n\times n}$ is called a \textit{Z-matrix} if $a_{ij}\leq0$ for all $i\neq j$. Thus, we can always write such matrices as $A=s I_{n}-B$, where $I_n$ is the $n\times n$ identity matrix and $B\in\mathbb{R}^{n\times n}_{\scriptscriptstyle\geq0}$. If $s>\rho(B)$, with $\rho(B)$ denoting the spectral radius of $B$, then $A$ is called an \textit{M-matrix}. Thus, M-matrices are always invertible. If $s\geq\rho(B)$, then $A$ is called an \textit{M$_0$-matrix}. The letter M in M-matrices stands for Minkowski. This terminology appears to have been first introduced by Ostrowski~\cite{Ostrowski37,Ostrowski56} in connection with the work of Minkowski~\cite{Minkowski1900,Minkowski1907}, who proved that if a Z-matrix has all row sums positive, then its determinant is positive. The classes of M- and M$_0$-matrices play a central role in iterative methods in numerical analysis of dynamical systems, finite difference schemes for partial differential equations, linear complementarity problems, and the study of Markov processes.
	
	In their 1994 book~\cite{Berman_Plemmons}, Berman--Plemmons provided 50 equivalent conditions to a Z-matrix being an M-matrix. We state some of them below.
	
	\begin{theorem}\cite[Chapter 6]{Berman_Plemmons}
		Let $A\in\mathbb{R}^{n\times n}$ be a Z-matrix. Then each of the following conditions is equivalent to saying that $A$ is an M-matrix.
		\begin{itemize}
			\item[(1)] All real eigenvalues of every principal submatrix of $A$ are positive.
			\item[(2)] All principal minors of $A$ are positive.
			\item[(3)] All leading principal minors of $A$ are positive.
			\item[(4)] $A^{-1}$ exists and is nonnegative.
			\item[(5)] $A$ is monotone, i.e., if $A\mathbf{x}\in\mathbb{R}^{n}_{\scriptscriptstyle\geq0}$, then $\mathbf{x}\in\mathbb{R}^n_{\scriptscriptstyle\geq0}$ for all $\mathbf{x}\in\mathbb{R}^n$.
			\item[(6)] There exists $\mathbf{x}\in\mathbb{R}^n_{\scriptscriptstyle\geq0}$ such that $A\mathbf{x}\in\mathbb{R}_{\scriptscriptstyle>0}$.
		\end{itemize}
	\end{theorem}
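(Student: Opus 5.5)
We would prove the equivalences by writing the Z-matrix as $A=sI_n-B$ with $B\in\mathbb{R}^{n\times n}_{\scriptscriptstyle\geq0}$ and running two cycles of implications: $\text{M}\Rightarrow(4)\Rightarrow(5)\Rightarrow(6)\Rightarrow\text{M}$ and $\text{M}\Rightarrow(1)\Rightarrow(2)\Rightarrow(3)\Rightarrow\text{M}$. Throughout we use two standard Perron--Frobenius facts for nonnegative matrices. First, $\rho(B)$ is an eigenvalue of $B$ with a nonnegative left eigenvector $\mathbf{y}\neq\mathbf{0}$. Second, $\rho(B[\alpha])\leq\rho(B)$ for every principal submatrix $B[\alpha]$.

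For the first cycle:
\begin{itemize}
\item[(a)] $\text{M}\Rightarrow(4)$. If $s>\rho(B)$, then $\rho(B/s)<1$, so the Neumann series gives $A^{-1}=s^{-1}\sum_{k\geq0}(B/s)^k\geq0$ entrywise.
\item[(b)] $(4)\Rightarrow(5)$. If $A\mathbf{x}\geq0$, then $\mathbf{x}=A^{-1}(A\mathbf{x})\geq0$.
\item[(c)] $(5)\Rightarrow(6)$. If $A\mathbf{x}=\mathbf{0}$, monotonicity applied to $\mathbf{x}$ and to $-\mathbf{x}$ forces $\mathbf{x}=\mathbf{0}$, so $A$ is invertible. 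Put $\mathbf{x}:=A^{-1}\mathbf{e}$ with $\mathbf{e}$ the all-ones vector. Then $\mathbf{x}\geq0$ by monotonicity and $A\mathbf{x}=\mathbf{e}>0$.
\item[(d)] $(6)\Rightarrow\text{M}$. Take $\mathbf{x}\geq0$ with $A\mathbf{x}>0$ and the left Perron vector $\mathbf{y}\geq0$, $\mathbf{y}\neq\mathbf{0}$. Then
\begin{align*}
0<\mathbf{y}^T(A\mathbf{x})=(s-\rho(B))\,\mathbf{y}^T\mathbf{x}.
\end{align*}
Since $\mathbf{y}^T\mathbf{x}\geq0$, this forces $\mathbf{y}^T\mathbf{x}>0$ and hence $s>\rho(B)$.
\end{itemize}

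For the second cycle:
\begin{itemize}
\item[(a)] $\text{M}\Rightarrow(1)$. Each principal submatrix is $A[\alpha]=sI-B[\alpha]$. Its real eigenvalues are $s-\mu$ with $\mu$ a real eigenvalue of $B[\alpha]$, so $|\mu|\leq\rho(B[\alpha])\leq\rho(B)<s$ and $s-\mu>0$.
\item[(b)] $(1)\Rightarrow(2)$. A principal minor is the product of the eigenvalues of a real principal submatrix. Non-real eigenvalues occur in conjugate pairs and contribute positive factors $|\lambda|^2$, while the real ones are positive by (1).
\item[(c)] $(2)\Rightarrow(3)$. This is trivial.
\end{itemize}

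The step we expect to be the main obstacle is $(3)\Rightarrow\text{M}$, since leading minors carry much less information. We plan to prove it by induction on $n$, going through (4). The case $n=1$ is immediate. For $n>1$, partition
\begin{align*}
A=\begin{bmatrix} A_{11} & \mathbf{b}\\ \mathbf{c}^T & d\end{bmatrix},\qquad \mathbf{b},\mathbf{c}\leq0 .
\end{align*}
The block $A_{11}$ is a Z-matrix whose leading principal minors are positive, so by induction it is an M-matrix and $A_{11}^{-1}\geq0$ by the first cycle. The identity $\det A=\det A_{11}\,(d-\mathbf{c}^TA_{11}^{-1}\mathbf{b})$ together with $\det A>0$ gives Schur complement $\sigma:=d-\mathbf{c}^TA_{11}^{-1}\mathbf{b}>0$.

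The block inverse formula then gives the following blocks of $A^{-1}$:
\begin{itemize}
\item[(i)] the top-left block $A_{11}^{-1}+\sigma^{-1}(A_{11}^{-1}\mathbf{b})(\mathbf{c}^TA_{11}^{-1})$, which is nonnegative because both factors in the rank-one term are entrywise $\leq0$;
\item[(ii)] the off-diagonal blocks $-\sigma^{-1}A_{11}^{-1}\mathbf{b}\geq0$ and $-\sigma^{-1}\mathbf{c}^TA_{11}^{-1}\geq0$;
\item[(iii)] the corner entry $\sigma^{-1}>0$.
\end{itemize}
Hence $A^{-1}\geq0$, which is (4), and the first cycle yields that $A$ is an M-matrix. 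This closes both cycles and completes the plan.
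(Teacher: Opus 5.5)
Your argument is correct and complete. The paper gives no proof of this statement; it only cites Chapter~6 of Berman--Plemmons, where these conditions appear among a long list of equivalent characterizations. So there is no internal route to compare against.

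Your two cycles form a self-contained proof. They use only the two Perron--Frobenius facts you list, the Neumann series, and the Schur complement identity. The induction for $(3)\Rightarrow$ M is the Gaussian-elimination (triangular $LU$ factorization) argument in compact form: a Z-matrix with positive pivots has Z-matrix Schur complements and a nonnegative inverse.

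One point is worth stating explicitly. The splitting $A=sI_n-B$ with $B\geq 0$ is not unique, and the paper's definition leaves implicit that the condition $s>\rho(B)$ does not depend on the choice. Your proof already settles this. Step (a) of your first cycle needs only one splitting with $s>\rho(B)$, while step (d) yields $s>\rho(B)$ for every splitting. Saying so removes any ambiguity about which splitting ``M-matrix'' refers to in your chain.
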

	
	We now discuss an important class of matrices closely related to M-matrices, namely inverse M-matrices. A matrix $A\in\mathbb{R}^{n\times n}$ is called an \textit{inverse M-matrix} (IM) if $A^{-1}$ exists and $A^{-1}$ is an M-matrix. Note that an IM-matrix is necessarily a nonnegative matrix. This class of matrices is of independent interest, as IM-matrices arise in several applications, including taxonomy, the Ising model of ferromagnetism, and random energy models in statistical physics. For further details on inverse M-matrices, see the survey~\cite{Johnson_et_al_Inverse_M} and the references therein.
	
	In 1985, Berman--Hershkowitz--Johnson~\cite{BHJ85} characterized the onto linear preservers of the classes of M- and M$_0$-matrices. Later, in 1990, Tam--Liou~\cite{Tam_IM90} gave a full description of onto linear preservers of the class of IM-matrices and its closure. We state their results below.
	
	\begin{theorem}\cite[Theorem 2]{BHJ85}\cite[Theorem 4.1 ]{Tam_IM90}\label{Theorem_M}
		Let $\mathcal{L}:\mathbb{R}^{n\times n}\to\mathbb{R}^{n\times n}$ be a linear transformation, and let $\mathcal{S}\subseteq\mathbb{R}^{n\times n}$ be any one of the classes M, M$_0$, IM, $\overline{IM}$. Then the following statements are equivalent.
		\begin{itemize}
			\item[(1)] $\mathcal{L}$ maps $\mathcal{S}$ onto itself.
			
			\item[(2)] $\mathcal{L}$ is a composition of one or more of the following types of transformations:
			\begin{itemize}
				\item[(a)] $A\mapsto FAE$, where $F,E\in\mathcal{D}_{\scriptscriptstyle{>0}}(n)$;
				\item[(b)] $A\mapsto A^T$; and
				\item[(c)] $A\mapsto Q^TAQ$, where $Q\in\mathbb{R}^{n\times n}$ is a permutation matrix.
			\end{itemize}
		\end{itemize}
	\end{theorem}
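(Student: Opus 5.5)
The easy direction is the sufficiency in (2)$\Rightarrow$(1). Each of the maps (a)--(c) is invertible, and its inverse is a map of the same type. So it suffices to check that each of them sends $\mathcal{S}$ into $\mathcal{S}$.

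For the class M, I would argue as follows.
\begin{itemize}
\item Each map preserves the Z-pattern, i.e.\ nonpositivity of the off-diagonal entries.
\item It suffices to check criterion (2) of the Berman--Plemmons theorem, namely that all principal minors are positive. Under $FAE$, each principal minor is multiplied by a positive product of diagonal entries. Under $A^T$ the principal minors are unchanged, and under $Q^TAQ$ they are permuted.
\end{itemize}
The same reasoning handles M$_0$, using nonnegativity of the principal minors. For IM, I would use $(FAE)^{-1}=E^{-1}A^{-1}F^{-1}$, $(A^T)^{-1}=(A^{-1})^T$ and $(Q^TAQ)^{-1}=Q^TA^{-1}Q$, which reduces the claim to the M case. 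The closure $\overline{IM}$ then follows by continuity.

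For (1)$\Rightarrow$(2), the first step is to reduce the open classes to their closures. Each class contains a basis of $\mathbb{R}^{n\times n}$; for instance, $I+\varepsilon E_{ij}$ with appropriate signs lies in each class. Hence an onto preserver $\mathcal{L}$ is invertible and $\mathcal{L}^{-1}$ is also a preserver. Since $\overline{M}=M_0$, Lemma~\ref{lemma_preserver} shows that every onto M-preserver is an onto M$_0$-preserver. The same lemma gives that IM-preservers are $\overline{IM}$-preservers. It therefore suffices to classify onto preservers of M$_0$ and of $\overline{IM}$, and afterwards to check that the resulting maps do preserve the open classes, which was done above.

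For M$_0$, the plan is to exploit the extreme rays and faces of the cone. The class M$_0$ is a closed cone, though not a convex one. However, its interaction with the linear span structure is rigid. I would proceed as follows.
\begin{enumerate}
\item The matrices $-E_{ij}$ for $i\neq j$, together with $E_{ii}$, are limits of M$_0$ directions. More precisely, $-E_{ij}+tI$ is in M$_0$ exactly when $t\ge0$. The diagonal nonnegative matrices $\mathcal{D}_{\ge0}(n)$ form the maximal linear-segment structure inside M$_0$.
\item I would identify, intrinsically, the set of $A\in M_0$ such that $A+M_0\subseteq M_0$. I expect this to force $\mathcal{L}$ to map $\mathcal{D}_{\ge0}(n)$ onto itself and to permute the rays $\{-E_{ij}\}$.
\item Consider the rank-one singular elements $e_ie_i^T-\lambda e_ie_j^T$. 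Tracking them shows that the permutation of off-diagonal positions is induced by a permutation $\sigma$ of $[n]$, possibly composed with transposition.
\item Composing $\mathcal{L}$ with the inverses of (b) and (c) reduces to the case where $\mathcal{L}$ fixes every position up to positive scalars, i.e.\ $\mathcal{L}(A)=H\circ A$ with $H>0$.
\item Finally, singularity of $2\times2$ principal blocks such as $\begin{bmatrix}1&-1\\-1&1\end{bmatrix}$, and of cycle matrices $I-C$ for permutation cycles $C$, forces $h_{ij}h_{ji}=h_{ii}h_{jj}$ and more generally cycle conditions $h_{i_1i_2}\cdots h_{i_ki_1}=h_{i_1i_1}\cdots h_{i_ki_k}$. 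These are exactly the conditions for $H=(f_ie_j)$, i.e.\ $\mathcal{L}(A)=FAE$.
\end{enumerate}

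For $\overline{IM}$, I would run the analogous argument. Nonnegative diagonal matrices and the rank-one matrices $e_ie_j^T$ are in $\overline{IM}$. The obstacle is that IM is defined through inversion, which is nonlinear. I would handle this by using the fact that the $2\times2$ members of $\overline{IM}$ are exactly the nonnegative matrices with $a_{11}a_{22}\ge a_{12}a_{21}$, and the known path-product inequalities for IM matrices, to get the same cycle conditions.

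The main obstacle is the step showing that $\mathcal{L}$ permutes coordinate positions, i.e.\ steps 2--3 above. This requires identifying the faces of these nonconvex cones intrinsically, in terms of $\mathcal{S}$ alone. I would first try characterizing the positions through the maximal subspaces $W$ with $W\cap\mathcal{S}$ of full dimension in $W$ and with $\mathcal{S}$ closed under addition on $W$.
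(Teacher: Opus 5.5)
The survey quotes this theorem from \cite{BHJ85} and \cite{Tam_IM90} without proof, so your proposal has to stand on its own. Several parts are sound: the sufficiency direction, the reduction to $\mathrm{M}_0$ and $\overline{\mathrm{IM}}$ via Lemma~\ref{lemma_preserver}, and Step~5 for $\mathrm{M}_0$ (the cycle equalities follow by applying the inequalities to both $\mathcal{L}$ and $\mathcal{L}^{-1}$).

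The gap is the core of necessity, which you flag but do not close. You never show that $\mathcal{L}$ sends each $E_{ij}$ to a positive multiple of some $\pm E_{kl}$, sends diagonal positions to diagonal positions, and induces on off-diagonal positions a map coming from a permutation of $[n]$, possibly followed by transposition. The tools you name do not give this:
\begin{itemize}
\item[(i)] The set $\{A\in\mathrm{M}_0 : A+\mathrm{M}_0\subseteq\mathrm{M}_0\}$ is exactly $\mathcal{D}_{\geq0}(n)$. Indeed, if $a_{ij}<0$ for some $i\neq j$, adding $-tE_{ji}\in\mathrm{M}_0$ makes the $\{i,j\}$ principal minor $a_{ii}a_{jj}-a_{ij}a_{ji}+ta_{ij}$ negative for large $t$. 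So this set gives $\mathcal{L}(\mathcal{D}_{\geq0}(n))=\mathcal{D}_{\geq0}(n)$, but it contains no $-E_{ij}$ and says nothing about where they go.
\item[(ii)] Step~3 carries no information. Every $cE_{kk}-\mu E_{pq}$ with $c,\mu\geq0$ and $p\neq q$ is a singular element of $\mathrm{M}_0$. Hence the images of $E_{ii}-\lambda E_{ij}$ are singular and lie in $\mathrm{M}_0$ whichever positions they occupy.
\item[(iii)] The $\overline{\mathrm{IM}}$ case is only announced, and it is not literally analogous. $\overline{\mathrm{IM}}$ contains every nonnegative rank-one matrix. Moreover, for $n\geq3$, $E_{12}+E_{23}+D\notin\overline{\mathrm{IM}}$ for all $D\in\mathcal{D}_{\geq0}(n)$, by the path product $a_{12}a_{23}\leq a_{13}a_{22}$, even though $E_{12},E_{23}\in\overline{\mathrm{IM}}$.
\end{itemize}

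The missing idea is to pass to convex hulls. A linear bijection with $\mathcal{L}(\mathcal{S})=\mathcal{S}$ maps $\mathrm{conv}(\mathcal{S})$ onto itself. Here $\mathrm{conv}(\mathrm{M}_0)$ is the simplicial cone generated by the $E_{ii}$ and the $-E_{ij}$ ($i\neq j$). Also $\mathrm{conv}(\overline{\mathrm{IM}})=\mathbb{R}^{n\times n}_{\geq0}$, since $\varepsilon I+E_{ij}\in\mathrm{IM}$. Hence $\mathcal{L}$ permutes these $n^2$ extreme rays. Your stabilizer then gives $\mathcal{L}(E_{ii})=c_iE_{\pi(i)\pi(i)}$ for a permutation $\pi$. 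For $\overline{\mathrm{IM}}$, use instead that $E_{ij}+E_{ji}\notin\overline{\mathrm{IM}}$, whereas $E_{ii}$ plus any other $E_{kl}$ lies in $\overline{\mathrm{IM}}$.

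For $\mathrm{M}_0$ you then tie the off-diagonal positions to $\pi$ in three steps:
\begin{itemize}
\item[(a)] $-(E_{ij}+E_{ji})\notin\mathrm{M}_0$, whereas for two distinct non-reverse arcs $-(E_{pq}+E_{kl})$ is nilpotent and hence in $\mathrm{M}_0$. Using $\mathcal{L}^{-1}$ as well, reverse pairs go to reverse pairs.
\item[(b)] $E_{ii}+E_{jj}-E_{ij}-E_{ji}\in\mathrm{M}_0$ forces the image pair onto the positions $\{\pi(i),\pi(j)\}$, because the relevant $2\times2$ principal minor needs both diagonal entries positive.
\item[(c)] $-(E_{ij}+E_{jk}+E_{ki})\notin\mathrm{M}_0$ forces the images of the three pairs of a triangle to form a directed $3$-cycle. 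So for $n\geq3$ the choice between $(\pi(i),\pi(j))$ and $(\pi(j),\pi(i))$ is the same for all pairs.
\end{itemize}
This is exactly what your Step~4 presupposes.

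For $\overline{\mathrm{IM}}$ the same scheme works with two changes. The test matrix is the all-ones block $E_{ii}+E_{jj}+E_{ij}+E_{ji}$. Path-product obstructions replace directed cycles: for distinct $a,b,c$, $I+E_{ab}+E_{cb}\in\mathrm{IM}$, but $E_{ab}+E_{bc}+D\notin\overline{\mathrm{IM}}$. Step~5 also needs its own test matrix. Once $\mathcal{L}(A)=H\circ A$, apply $a_{ik}a_{kj}\leq a_{ij}a_{kk}$ to $\mathcal{L}(J)=H$ and to $\mathcal{L}^{-1}(J)=(1/h_{ij})$, with $J$ the all-ones matrix. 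This gives equality, so $H$ has rank one.
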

	
	In the same work, using Theorem~\ref{Theorem_M} Berman--Hershkowitz--Johnson, classified the onto linear preservers of H- and H$_0$-matrices, respectively. We first recall their definitions.
	\begin{defn}
		For $A=(a_{ij})\in\mathbb{C}^{n\times n}$, the \textit{comparison matrix} $\mathcal{M}(A)$ is defined entrywise by
		\begin{align*}
			\mathcal{M}(A)_{ij} := \begin{cases}
				\phantom{-}|a_{ii}|, &\text{if} ~ i=j, \\
				-|a_{ij}|, &\text{if} ~ i\neq j.
			\end{cases}
		\end{align*}
		A matrix $A\in\mathbb{C}^{n\times n}$ is called an \textit{H-matrix} if $\mathcal{M}(A)$ is an M-matrix, that is,
		\begin{align*}
			H := \{A\in\mathbb{C}^{n\times n} \mid \mathcal{M}(A)\in \mathrm{M}\}.
		\end{align*}
		Similarly, $A\in\mathbb{C}^{n\times n}$ is called an \textit{H$_0$-matrix} if $\mathcal{M}(A)$ is an M$_0$-matrix.
	\end{defn}
	
	\begin{cor}\cite[Corollary 1]{BHJ85}
		Let $\mathcal{L}:\mathbb{C}^{n\times n}\to\mathbb{C}^{n\times n}$ be a linear transformation. Then the following statements are equivalent.
		\begin{itemize}
			\item[(1)] $\mathcal{L}$ maps the class of $n\times n$ H$_0$-matrices onto itself.
			
			\item[(2)] $\mathcal{L}$ is a composition of one or more of the following types of transformations:
			\begin{itemize}
				\item[(a)] $A\mapsto FAE$, where $F,E\in\mathcal{D}_{\scriptscriptstyle{>0}}(n)$;
				\item[(b)] $A\mapsto A^T$;
				\item[(c)] $A\mapsto Q^TAQ$, where $Q\in\mathbb{R}^{n\times n}$ is a permutation matrix; and
				\item[(d)] $A\mapsto K\circ A$, where $K=(k_{ij})$ satisfies $|k_{ij}|=1$ for all $i,j\in[n]$.
			\end{itemize}
		\end{itemize}
		Moreover, the theorem is also true if H$_0$ is replaced by H.
	\end{cor}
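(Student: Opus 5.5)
The idea is to reduce everything to Theorem~\ref{Theorem_M} for M$_0$-matrices, using the comparison map $\mathcal{M}$.

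\emph{Sufficiency.} We check that each map (a)--(d) sends $H_0$ onto $H_0$; composition then takes care of the rest. Each of these maps is invertible, and its inverse is again of the same type. So it is enough to show that it maps $H_0$ into $H_0$. We compute the comparison matrix in each case:
\begin{itemize}
\item[(a)] $\mathcal{M}(FAE)=F\mathcal{M}(A)E$.
\item[(b)] $\mathcal{M}(A^T)=\mathcal{M}(A)^T$.
\item[(c)] $\mathcal{M}(Q^TAQ)=Q^T\mathcal{M}(A)Q$.
\item[(d)] $\mathcal{M}(K\circ A)=\mathcal{M}(A)$, since $|k_{ij}|=1$.
\end{itemize}
By Theorem~\ref{Theorem_M}, the operations in (a)--(c) preserve M$_0$ (and M). Hence each map preserves $H_0$ (and $H$).

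\emph{Necessity.} Suppose $\mathcal{L}(H_0)=H_0$. Since $H_0$ contains a basis of $\mathbb{C}^{n\times n}$ (all diagonal matrices, together with small perturbations $I+tE_{ij}$), $\mathcal{L}$ is bijective.

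First, we show that $\mathcal{L}$ permutes the one-dimensional subspaces $\mathbb{C}E_{ij}$, and maps the diagonal to the diagonal up to this permutation. The tool is a geometric characterization of $H_0$ that $\mathcal{L}$ must respect. For fixed $B\in H_0$, the set $\{C : B+\mathbb{C}C\subseteq H_0\}$ is a "lineality-type" set. A candidate is: the $E_{ij}$ with $i\neq j$ are exactly the matrices $C$ for which $I+zC\in H_0$ for all $z$ with $|z|$ small, while the rank-one behaviour differs for diagonal directions.

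A cleaner route is to note that $H_0$ is a union of "polydisc" sets, invariant under entrywise unimodular multiplication. Composing $\mathcal{L}$ with suitable maps of type (d) and conjugating by such multiplications, we use the fact that the group $\{A\mapsto K\circ A\}$ of $H_0$-automorphisms acts as a torus. $\mathcal{L}$ normalizes the identity component of the automorphism group of the set $H_0$. This forces $\mathcal{L}$ to map weight spaces, the coordinate lines $\mathbb{C}E_{ij}$, to weight spaces.

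Second, once $\mathcal{L}(E_{ij})=c_{ij}E_{\sigma(i,j)}$, we write $c_{ij}=|c_{ij}|\,k_{ij}$ and compose with $A\mapsto \overline{K}\circ A$. This reduces to the case where all $c_{ij}>0$. Such an $\mathcal{L}$ commutes with $\mathcal{M}$ on real Z-patterned matrices, so it maps real Z-matrices in M$_0$ onto M$_0$. Here we use that a real Z-matrix lies in $H_0$ exactly when it lies in M$_0$, after restricting to nonnegative diagonal. Theorem~\ref{Theorem_M} then identifies the restriction as a composition of (a)--(c). Since the restriction determines $\mathcal{L}$ on the basis $\{E_{ij}\}$, $\mathcal{L}$ itself has that form.

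\emph{Main obstacle.} The main obstacle is the first step: proving rigorously that $\mathcal{L}$ maps coordinate lines to coordinate lines. If the symmetry-group argument is too heavy, my first fallback is an extreme-ray/face analysis of the closed set $H_0$. The rank-one matrices $E_{ij}$ with $i\neq j$ are characterized by the property $A+\mathbb{C}E_{ij}\not\subseteq H_0$ but $A+\{z:|z|\le r\}E_{ij}\subseteq H_0$ with the radius $r$ depending only on $\mathcal{M}(A)$. The case $H$ then follows from the $H_0$ case via Lemma~\ref{lemma_preserver}, since $\overline{H}=H_0$.
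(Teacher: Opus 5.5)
The parts of your proposal outside the first step are correct. This covers sufficiency, the bijectivity of $\mathcal{L}$, and your second step: normalizing the phases with a map of type (d), restricting to real matrices with nonnegative diagonal and nonpositive off-diagonal entries (where $\mathcal{M}$ is the identity), and applying Theorem~\ref{Theorem_M}. It also covers the passage to $H$ via Lemma~\ref{lemma_preserver} and $\overline{H}=H_0$. Reducing to Theorem~\ref{Theorem_M} matches the route the paper attributes to Berman--Hershkowitz--Johnson; the paper gives no proof beyond that.

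The genuine gap is your first step, which carries essentially all the content of necessity. Nothing you write shows that $\mathcal{L}$ permutes the lines $\mathbb{C}E_{ij}$, or that it sends diagonal positions to diagonal positions. None of your three routes works as stated.
\begin{itemize}
\item[(i)] The ``candidate'' characterization is vacuous. $\mathcal{M}(I+zC)$ is a Z-matrix tending to $I$, and M-matrices are open among Z-matrices, so $I+zC\in H\subseteq H_0$ for small $|z|$ and \emph{every} $C$.
\item[(ii)] The torus argument does not reach $T$. From $\mathcal{L}\in G:=\{\mathcal{T}:\mathcal{T}(H_0)=H_0\}$ you only get that conjugation by $\mathcal{L}$ preserves the identity component $G^0$, not the torus $T=\{A\mapsto K\circ A\}$. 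To get $\mathcal{L}T\mathcal{L}^{-1}=T$, and hence that $\mathcal{L}$ permutes the weight spaces, you must know $T$ is characteristic in $G^0$. For instance, it would suffice that $G^0$ is exactly the abelian group generated by (a) and (d), so that $T$ is its unique maximal compact subgroup. That is essentially the statement being proved. Conjugacy of maximal tori, which already presupposes $T$ is maximal, only yields that $g\mathcal{L}$ normalizes $T$ for some unknown $g\in G^0$.
\item[(iii)] The fallback fails on several counts. $H_0$ is not convex: $\begin{bsmallmatrix}1&4\\0&1\end{bsmallmatrix}$ and $\begin{bsmallmatrix}1&0\\4&1\end{bsmallmatrix}$ lie in $H_0$ but their average does not, so extreme rays and faces are unavailable. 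A property involving ``a radius depending only on $\mathcal{M}(A)$'' is not phrased in terms of $H_0$ and linear structure, so $\mathcal{L}$ need not transport it. It is also false at boundary points: for $A=\begin{bsmallmatrix}1&-1\\-1&1\end{bsmallmatrix}$, $A-\epsilon E_{12}\notin H_0$ for every $\epsilon>0$.
\end{itemize}

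What is missing is a description of the coordinate lines in terms of data every linear bijection with $\mathcal{L}(H_0)=H_0$ must respect, namely which points, lines and subspaces lie in $H_0$. Natural invariant data are the complex subspaces contained in $H_0$; every subspace of matrices triangular with respect to some ordering of $[n]$ is one. Another source is the sets $\{C : B+\mathbb{C}C\subseteq H_0\}$, which $\mathcal{L}$ carries to the corresponding sets for $\mathcal{L}(B)$. Even knowing that $\mathcal{L}$ permutes the triangular subspaces, intersecting them locates $\mathcal{L}(E_{ij})$ only modulo the diagonal, so further work is needed.

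Once coordinate lines are known to be permuted, the diagonal part is easy. For $i\neq j$, $\mathbb{C}E_{ij}+\mathbb{C}E_{ji}\not\subseteq H_0$, while the sum of any two coordinate lines not of this form lies in $H_0$. Hence $\mathcal{L}$ must send diagonal lines to diagonal lines.

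As it stands, your proposal proves sufficiency and reduces necessity to an unproved coordinate-line lemma, but it does not prove the corollary.
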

	
	We end this subsection by noting that the study of into linear preservers of matrix classes discussed in this subsection are unexplored.
	
	\subsection{P-matrices, $\mathcal{D}$-stable matrices, and $\mathcal{A}$-matrices}
	
	A matrix $A\in\mathbb{C}^{n\times n}$ is called a \textit{P-matrix} if all of its principal minors are positive. If all principal minors of $A$ are nonnegative, then $A$ is said to be a \textit{P$_0$-matrix}. The class of P-matrices include some important positivity classes such as positive definite matrices, TP matrices, the (inverse) M-matrices. The earliest systematic study of P-matrices is due to Fiedler--Pt\'ak~\cite{Fiedler-Ptak}. Since this foundational work, the class P and its subclasses have emerged as a rich area of research. P-matrices play a pivotal role in numerous applications, including the linear complementarity problem, global univalence of maps, and interval matrices. For a detailed account of P-matrices, see the book~\cite{Johnson_Smith_Tsatsomeros20}.
	
	We now present a classical characterization of real P-matrices due to Gale--Nikaido~\cite{Gale_Nikaido}.
	
	\begin{theorem}\cite[Theorem 2]{Gale_Nikaido}\label{P_SNP}
		Let $A\in\mathbb{R}^{n\times n}$. Then $A$ is a P-matrix if and only if, for all $\mathbf{0}\neq\mathbf{x}\in \mathbb{R}^n$, the vector $\mathbf{x}\circ (A\mathbf{x})$ has at least one positive entry.
	\end{theorem}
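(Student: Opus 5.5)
Both directions go through principal submatrices. For a nonempty index set $\alpha\subseteq[n]$, write $A[\alpha]$ for the principal submatrix of $A$ on $\alpha$. For $\mathbf{x}\in\mathbb{R}^n$, write $\mathbf{x}_\alpha$ for its restriction to $\alpha$ and $\mathrm{supp}(\mathbf{x})$ for its support. I will prove the ``only if'' direction by a continuity/connectedness argument on $\mathbf{x}\circ(A\mathbf{x})$, and the ``if'' direction by contraposition, producing a sign-reversed vector from a nonpositive real eigenvalue of some principal submatrix.

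\emph{($\Rightarrow$)} Suppose $A$ is a P-matrix and there is $\mathbf{0}\neq\mathbf{x}$ with $x_i(A\mathbf{x})_i\le 0$ for all $i$. Let $\alpha=\mathrm{supp}(\mathbf{x})$ and $\mathbf{y}=\mathbf{x}_\alpha\neq\mathbf{0}$. Then $(A\mathbf{x})_\alpha=A[\alpha]\mathbf{y}$, so $\mathbf{y}\circ(A[\alpha]\mathbf{y})\le\mathbf{0}$ entrywise, and every entry of $\mathbf{y}$ is nonzero. Hence there is a diagonal matrix $D\in\mathcal{D}_{\scriptscriptstyle\geq0}(|\alpha|)$ with $A[\alpha]\mathbf{y}=-D\mathbf{y}$, namely $d_i=-(A[\alpha]\mathbf{y})_i/y_i\ge0$. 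Thus $(A[\alpha]+D)\mathbf{y}=\mathbf{0}$, so $\det(A[\alpha]+D)=0$. On the other hand, $\det(B+D)$ for a diagonal $D\geq0$ expands as a sum, over subsets $\beta$, of $\prod_{i\notin\beta}d_i\cdot\det B[\beta]$. Taking $B=A[\alpha]$, which is a P-matrix, every term is nonnegative and the $\beta=\alpha$ term $\det A[\alpha]$ is positive. So $\det(A[\alpha]+D)>0$, a contradiction. Hence $\mathbf{x}\circ(A\mathbf{x})$ always has a positive entry.

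\emph{($\Leftarrow$)} Suppose $A$ is not a P-matrix, so some $\det A[\alpha]\le0$. The plan is to find a real eigenvalue $\lambda\le0$ of $A[\alpha]$ with a real eigenvector $\mathbf{y}$. Extending $\mathbf{y}$ by zeros to $\mathbf{x}$ then gives $x_i(A\mathbf{x})_i=\lambda y_i^2\le0$ on $\alpha$ and $0$ off $\alpha$, so no entry is positive. To find such an eigenvalue, note that if $\det A[\alpha]=0$ we take $\lambda=0$. If $\det A[\alpha]<0$, the non-real eigenvalues of the real matrix $A[\alpha]$ come in conjugate pairs with positive product. So the product of its real eigenvalues is negative, forcing a negative real eigenvalue, and the corresponding real eigenvector works.

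The main subtlety is the determinant expansion $\det(B+D)=\sum_\beta \det B[\beta]\prod_{i\notin\beta}d_i$, which follows from multilinearity of the determinant in columns. The care needed in the ($\Rightarrow$) direction is restricting to the support so that $D$ is well defined; once that is done, the remaining steps are routine.
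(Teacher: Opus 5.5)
The survey does not prove this result. It quotes it from Gale--Nikaido, so there is no argument in the paper to compare yours against. Judged on its own, your proof is correct and complete.

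\emph{The ($\Rightarrow$) direction.} Restricting to $\alpha=\mathrm{supp}(\mathbf{x})$ is what makes $d_i=-(A[\alpha]\mathbf{y})_i/y_i\ge 0$ well defined. The expansion $\det(A[\alpha]+D)=\sum_{\beta\subseteq\alpha}\det A[\beta]\prod_{i\in\alpha\setminus\beta}d_i$, with $\det A[\emptyset]=1$, is then a sum of nonnegative terms. It includes the positive term $\det A[\alpha]$, which contradicts $(A[\alpha]+D)\mathbf{y}=\mathbf{0}$.

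\emph{The ($\Leftarrow$) direction.} The conjugate-pair argument correctly yields a real eigenvalue $\lambda\le 0$ of $A[\alpha]$ with a real eigenvector. If there were no real eigenvalues, the empty product would force $\det A[\alpha]>0$. The zero-extension $\mathbf{x}$ of that eigenvector then satisfies $\mathbf{x}\circ(A\mathbf{x})\le\mathbf{0}$.

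This is the standard textbook route. It implicitly passes through the intermediate characterization that every real eigenvalue of every principal submatrix is positive.

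One cosmetic fix. Your opening sentence promises a ``continuity/connectedness argument'' for ($\Rightarrow$), but you actually give a purely algebraic determinant-expansion argument. Continuity is more naturally an alternative for ($\Leftarrow$). If $A$ reverses the sign of no nonzero vector, then $A[\alpha]+tI$ is nonsingular for every $t\ge 0$, since $(A[\alpha]+tI)\mathbf{y}=\mathbf{0}$ gives $y_i(A[\alpha]\mathbf{y})_i=-ty_i^2\le 0$. Hence $\det(A[\alpha]+tI)$ has constant sign on $[0,\infty)$, is positive for large $t$, and so $\det A[\alpha]>0$.

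That variant avoids complex eigenvalues altogether. Your eigenvalue version has the advantage of exhibiting an explicit sign-reversed vector. Either way, the roadmap sentence should be rewritten to match what you actually do.
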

	
	We next discuss some more classes of matrices.
	\begin{defn}
		\begin{itemize}
			\item[(1)] A matrix $A\in\mathbb{C}^{n\times n}$ is called \textit{$\mathcal{D}$-stable} if all eigenvalues of $DA$ have positive real parts for every $D\in\mathcal{D}_{\scriptscriptstyle{>0}}(n)$. If the eigenvalues of $DA$ have nonnegative real parts for every $D\in\mathcal{D}_{\scriptscriptstyle{>0}}(n)$, then $A$ is said to be \textit{$\mathcal{D}$-semistable}. We denote by $\mathcal{D}$ and $\mathcal{D}_0$ the classes of $\mathcal{D}$-stable and $\mathcal{D}$-semistable matrices, respectively.
			
			\item[(2)] Let $\mathcal{A}$ denote the class of all matrices $A\in\mathbb{C}^{n\times n}$ for which there exists a matrix $D\in\mathcal{D}_{\scriptscriptstyle{>0}}(n)$ such that $AD+DA^*$ is positive definite. Similarly, $\mathcal{A}_0$ denotes the class of all matrices $A\in\mathbb{C}^{n\times n}$ for which there exists $D\in\mathcal{D}_{\scriptscriptstyle{>0}}(n)$ such that $AD+DA^*$ is positive semidefinite.
		\end{itemize}
	\end{defn}
	These matrix classes satisfy the following inclusion relations:
	\begin{align*}
		\mathcal{A}\subseteq \mathrm{P} \qquad \text{and} \qquad \mathcal{A}\subseteq\mathcal{D}.
	\end{align*}
	In 1985, Berman--Hershkowitz--Johnson~\cite{BHJ85} studied onto linear preservers of P-matrices, $\mathcal{D}$-stable matrices, and $\mathcal{A}$-matrices. To state their result, we first set some notation. We denote by P$_{k}$ and P$_{0k}$ the classes of $n\times n$ matrices whose principal minors of order at most $k$ ($k<n$) are positive and nonnegative, respectively. The class P$_0^+$ consists of all P$_0$-matrices with the additional requirement that there is at least one positive principal minor of each size.
	
	\begin{theorem}\cite[Theorem 1]{BHJ85}\label{Theorem_BHJ}
		Let $\mathcal{L}:\mathbb{R}^{n\times n}\to\mathbb{R}^{n\times n}$ be a linear transformation, and let $\mathcal{S}\subseteq\mathbb{R}^{n\times n}$ be any one of the classes P$_0$, P, P$_0^+$, P$_{03}$, P$_3$, $\mathcal{D}_0$, $\mathcal{D}$, $\mathcal{A}_0$, and $\mathcal{A}$. Then the following statements are equivalent.
		\begin{itemize}
			\item[(1)] $\mathcal{L}$ maps $\mathcal{S}$ onto itself.
			
			\item[(2)] $\mathcal{L}$ is a composition of one or more of the following types of transformations:
			\begin{itemize}
				\item[(a)] $A\mapsto FAE$, where $F,E\in\mathcal{D}_{\scriptscriptstyle{>0}}(n)$;
				\item[(b)] $A\mapsto A^T$;
				\item[(c)] $A\mapsto Q^TAQ$, where $Q\in\mathbb{R}^{n\times n}$ is a permutation matrix; and
				\item[(d)] $A\mapsto SAS$, where $S\in\mathcal{D}(n)$ is a signature matrix.
			\end{itemize}
		\end{itemize}
	\end{theorem}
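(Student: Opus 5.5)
We first check that each map in (2) sends $\mathcal{S}$ onto $\mathcal{S}$. For (a), $(FAE)[\alpha]=F[\alpha]A[\alpha]E[\alpha]$, so principal minors are multiplied by positive numbers. Also $D(FAE)=(DF)A E$ is similar to $(EDF)A$, with $EDF\in\mathcal{D}_{\scriptscriptstyle{>0}}(n)$. For the $\mathcal{A}$-classes we use $E\,(AD'+D'A^T)\,E$ with $D'=E^{-1}DF$. Transposition, permutation similarity and signature similarity preserve principal minors, spectra of $DA$ (after conjugating $D$), and the Lyapunov condition. Each map is invertible with inverse of the same type, which gives ``onto''.

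For the converse we use Lemma~\ref{lemma_preserver}. The classes P, P$_3$, $\mathcal{D}$, $\mathcal{A}$ have closures P$_0$, P$_{03}$, $\mathcal{D}_0$, $\mathcal{A}_0$. So it suffices to classify onto preservers of the closed classes and then check that the resulting maps preserve the open ones, which was done above. Each class contains a basis of $\mathbb{R}^{n\times n}$ (e.g.\ $I+tE_{ij}$ with $t$ small), so $\mathcal{L}$ is invertible and $\mathcal{L}^{-1}$ is also a preserver.

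The core step is to show that $\mathcal{L}$ permutes the rank-one units $E_{ij}$ up to positive scaling and sends diagonal positions to diagonal positions. I would exploit the structure that all listed classes share. They contain the nonnegative diagonal matrices, and they are all contained in P$_{0}$-type sets, hence in matrices with nonnegative diagonal. The set $\mathcal{S}$ is a cone, and $\mathcal{L}$ preserves its lineality space $\mathcal{S}\cap(-\mathcal{S})$, the set of extreme rays, and tangent and boundary structure.
\begin{itemize}
\item For the P$_0$-type classes, $A\in\mathcal{S}\cap(-\mathcal{S})$ forces all principal minors of order $1$ (and $3$, and so on) to vanish in both signs. I expect this subspace, together with the ``directions of recession'' $\{B : A+tB\in\mathcal{S}\ \forall A\in\mathcal{S},\ t\geq 0\}$, to be spanned by nonnegative diagonals only.
\item For the $\mathcal{D}_0$ and $\mathcal{A}_0$ classes, I would characterize $E_{ii}$ as the extreme directions $B$ such that $A+tB$ stays in the class.
\item I would characterize off-diagonal units $E_{ij}$ as those $B$ with $\mathcal{L}$-invariant properties such as ``$tB$ added to $\mathcal{S}$ never leaves $\mathcal{S}$ for all $t\in\mathbb{R}$'' relative to diagonal-supported elements. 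For example, $D+tE_{ij}$ lies in P$_0$ for every $t$ and every nonnegative diagonal $D$.
\end{itemize}
This yields that $\mathcal{L}$ maps the cone of nonnegative diagonals onto itself, hence acts as a permutation with positive weights on $\{E_{ii}\}$. It also maps $\operatorname{span}\{E_{ij}:i\neq j\}$ to itself.

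Next I pin down the off-diagonal action using $2\times2$ and $3\times3$ principal minors. The matrix $E_{ii}+E_{jj}+sE_{ij}+tE_{ji}$ lies in P$_0$ exactly when $st\leq1$. Testing this forces $\mathcal{L}(E_{ij})$ to be supported on the pair of positions $\{(\sigma i,\sigma j),(\sigma j,\sigma i)\}$ with a consistent permutation $\sigma$, and the product of the two coefficients must be preserved. After composing with the permutation similarity (c) and the scaling (a), we may assume $\mathcal{L}(E_{ii})=E_{ii}$. Then each $\mathcal{L}(E_{ij})$ is either $c_{ij}E_{ij}$ or $c_{ij}E_{ji}$.

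Order-$3$ minors then force consistency. Transposition must occur either for all pairs or for none, which is where (b) enters. Using the $3$-cycle term $a_{ij}a_{jk}a_{ki}$ of the $3\times3$ determinant, the weights must satisfy the cocycle condition $c_{ij}c_{jk}c_{ki}=1$ up to sign. This lets us write $c_{ij}=\pm f_i/f_j$ with $f_i>0$, and the signs form $s_is_j$, which corresponds to the signature map (d) composed with a diagonal scaling $F A F^{-1}$. This explains why P$_3$ and P$_{03}$ suffice. I expect the main obstacle to be the first step: proving invariantly, uniformly across the nine classes (especially $\mathcal{D}_0$ and $\mathcal{A}_0$, whose membership is not defined by minors), that $\mathcal{L}$ preserves the diagonal cone and sends units to units. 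I would handle $\mathcal{D}_0$ and $\mathcal{A}_0$ by noting that they contain the same diagonal and rank-two test matrices, with identical $2\times2$ behaviour, and by verifying those few membership facts directly.
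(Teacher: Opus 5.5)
Your check of (2)$\Rightarrow$(1) is fine, apart from a slip in the $\mathcal{A}$-case. With $D=E^{-1}D'F$ one gets $(FAE)D+D(FAE)^T=F(AD'+D'A^T)F$, which is a congruence by $F$, not $E$.

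The converse, however, is a plan rather than a proof, and two of its load-bearing claims are wrong. First, the reduction via Lemma~\ref{lemma_preserver} assumes $\overline{\mathcal{A}}=\mathcal{A}_0$ and $\overline{\mathcal{D}}=\mathcal{D}_0$. With the definitions used here the first is false. Take $D=\diag(\epsilon^{-2},1,\ldots,1)$; then $(\epsilon I+E_{12})D+D(\epsilon I+E_{21})=\begin{bsmallmatrix}2/\epsilon&1\\1&2\epsilon\end{bsmallmatrix}\oplus 2\epsilon I_{n-2}$ is positive definite, so $\epsilon I+E_{12}\in\mathcal{A}$. Its limit $E_{12}$ is not in $\mathcal{A}_0$, because $E_{12}D+DE_{21}=d_2(E_{12}+E_{21})$ is indefinite for every $D\in\mathcal{D}_{\scriptscriptstyle{>0}}(n)$. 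Hence $\mathcal{A}_0$ is not even closed, and Lemma~\ref{lemma_preserver} only gives $P(\mathcal{A})\cup P(\mathcal{A}_0)\subseteq P(\overline{\mathcal{A}})$, a class you never analyse. The equality $\overline{\mathcal{D}}=\mathcal{D}_0$ is asserted without justification. It is not evident, since nothing shows $DA+\epsilon D$ is stable when $DA$ is merely semistable. You also omit P$_0^+$, though $\overline{\mathrm{P}_0^+}=\mathrm{P}_0$ covers it.

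Second, and decisively, the step you flag as the main obstacle is never established. That step is that $\mathcal{L}$ permutes the rays $\mathbb{R}_{>0}E_{ii}$, preserves the zero-diagonal subspace, and sends each $E_{ij}$ $(i\neq j)$ to a multiple of a single unit; it is the real content of the theorem. The convex-cone tools you invoke do not apply, because none of the nine classes is convex. For instance, $\begin{bsmallmatrix}1&3\\0&1\end{bsmallmatrix}\oplus I_{n-2}$ and $\begin{bsmallmatrix}1&0\\3&1\end{bsmallmatrix}\oplus I_{n-2}$ lie in $\mathcal{A}$, hence in every class, but their sum has a negative $2\times2$ principal minor. So ``extreme rays'' and ``lineality space'' have no meaning here. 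Moreover, $\mathrm{P}_0\cap(-\mathrm{P}_0)$ is not a subspace (it contains $E_{12}$ and $E_{21}$ but not their sum), and it meets the diagonal matrices only in $0$.

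Some of your invariants are legitimate but incomplete:
\begin{itemize}
\item The recession set $\{B:\mathcal{S}+tB\subseteq\mathcal{S}\ \forall t\geq0\}$ is a genuine invariant. For P$_0$ and P$_{03}$ it is indeed $\mathcal{D}_{\scriptscriptstyle\geq0}(n)$, since testing against $cE_{ji}$ kills every off-diagonal entry. For $\mathcal{D}_0$, however, you would first need that adding a nonnegative diagonal matrix preserves $\mathcal{D}$-semistability, a nontrivial stability statement you do not address.
\item Your characterization of off-diagonal units fails. The set of $B$ with $D+tB\in\mathrm{P}_0$ for all $t\in\mathbb{R}$ and all $D\in\mathcal{D}_{\scriptscriptstyle\geq0}(n)$ contains every skew-symmetric matrix (principal minors of $D+S$ are nonnegative) and every strictly triangular matrix. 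For $\mathcal{A}_0$ the analogous set does not even contain $E_{ij}$, by the computation above. So the classes do not share the test matrices your argument needs.
\item The family $E_{ii}+E_{jj}+sE_{ij}+tE_{ji}$ by itself yields only sign conditions such as $b_{kl}b_{lk}\leq0$ on $\mathcal{L}(E_{ij})=(b_{kl})$. It does not show that the support is $\{(\sigma i,\sigma j),(\sigma j,\sigma i)\}$.
\end{itemize}

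Only your endgame is sound, and only for the minor-defined classes once the unit structure is known. There $c_{ij}c_{ji}=1$, and the test matrix $I+x(E_{12}+E_{23}+E_{31})$ (determinant $1+x^3$) forces $c_{ij}c_{jk}c_{ki}=+1$ exactly, not merely up to sign. This gives $c_{ij}=s_is_jf_j/f_i$.
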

	
	The above theorem provides a complete characterization of onto linear preservers of P-matrices and several classes closely related to it, as well as of $\mathcal{D}$-stable matrices and $\mathcal{A}$-matrices. In the same work, Berman--Hershkowitz--Johnson showed that linear maps on the space of $n\times n$ complex matrices that preserve the classes of P$_0$, P, P$_0^+$, P$_{03}$, P$_3$, $\mathcal{D}_0$, $\mathcal{D}$, $\mathcal{A}_0$, and $\mathcal{A}$ matrices can be obtained by arguments similar to those used in the proof of Theorem~\ref{Theorem_BHJ}, with minor modifications.
	
	We now shift our focus to into preservers. In 1986, Hershkowitz--Johnson~\cite{HJ86} examined into linear preservers of the class of P-matrices, under some additional constraints. We begin by discussing examples of linear maps that are into preservers of P-matrices. First, note that since into preservers need not be bijective, they allow additional maps that are excluded in the onto setting, as illustrated by the following example.
	
	\begin{example}\cite[Example 1]{HJ86}
		Let $A=(a_{ij})$, and define
		\begin{align*}
			\mathcal{L}(A) = \begin{bmatrix}
				a_{11} & l_{12} & \cdots & l_{1n} \\
				0      & a_{22} & \cdots & l_{2n} \\
				\vdots & \vdots & \ddots & \vdots \\
				0      &  0     & \cdots & a_{nn}  
			\end{bmatrix},
		\end{align*}
		where each $l_{ij}$ with $i<j$ is a linear functional on the space $\mathbb{C}^{n\times n}$. Clearly, $\mathcal{L}(\mathrm{P})\subseteq \mathrm{P}$, but $\mathcal{L}$ is singular.
	\end{example}
	Motivated by such examples, Hershkowitz--Johnson restricted their attention to invertible into preservers of P-matrices. Second, observe that the condition ``invertible and into'' is weaker than the condition ``onto''. As a result, there exist many linear maps that are invertible into preservers without being onto preservers, as demonstrated by the next example.
	
	\begin{example}\cite[Example 2]{HJ86}
		Let $A=(a_{ij})$, and define
		\begin{align*}
			\mathcal{L}(A) = \begin{bmatrix}
				2a_{11} & a_{12} & \cdots & a_{1n} \\
				a_{21}  & 2a_{22} & \cdots & a_{2n} \\
				\vdots & \vdots & \ddots & \vdots \\
				a_{n1} &  a_{n2} & \cdots & 2a_{nn}  
			\end{bmatrix}.
		\end{align*}
		Then $\mathcal{L}$ is invertible and satisfies $\mathcal{L}(\mathrm{P})\subseteq \mathrm{P}$, but $\mathcal{L}(\mathrm{P})\neq \mathrm{P}$.
	\end{example}
	We now turn to a discussion of the main results obtained by Hershkowitz--Johnson.
	
	\begin{theorem}\cite[Theorem 1]{HJ86}\label{Theorem_P_into}
		Let $n\geq2$, and let $\mathcal{L}:\mathbb{C}^{n\times n}\to\mathbb{C}^{n\times n}$ be a linear transformation such that
		\begin{align}\label{P_into_condition}
			N(\mathcal{L})\cap \{A:=(a_{ij}) \mid a_{ii} =0 ~ \forall ~ i\in[n]\} = \{\mathbf{0}_{n\times n}\},
		\end{align}
		where $N(\mathcal{L})$ denotes the null space of $\mathcal{L}$. Then the following statements are equivalent.
		\begin{itemize}
			\item[(1)] $\mathcal{L}$ maps the class of $n\times n$ P$_0$-matrices into themselves.
			\item[(2)] $\mathcal{L}$ is a composition of one or more of the following types of transformations:
			\begin{itemize}
				\item[(a)] Case 1: $n=2$.
				\begin{itemize}
					\item[(i)] $A\mapsto FAE$, where $F,E\in\mathcal{D}(2)$ with $FE\in\mathcal{D}_{\scriptscriptstyle{>0}}(2)$;
					\item[(ii)] $A\mapsto A^T$; and
					\item[(iii)] $\begin{bmatrix}
						a_{11} & a_{12} \\
						a_{21} & a_{22}
					\end{bmatrix} \mapsto \begin{bmatrix}
						k_1a_{11} + k_2a_{22} & a_{12} \\
						a_{21} & t_1a_{11} + t_2a_{22}
					\end{bmatrix}$, with $k_1, k_2, t_1, t_2 \geq0$ such that either
					\begin{align*}
						k_1t_2 + k_2t_1 \geq 1 \quad \text{or} \quad 1-2(k_1t_2+k_2t_1)+(k_1t_2-k_2t_1)^2\leq0.
					\end{align*}
				\end{itemize}
				
				\item[(b)] Case 2: $n\geq3$.
				\begin{itemize}
					\item[(i)] $A\mapsto FAE$, where $F,E\in\mathcal{D}(n)$ with $FE\in\mathcal{D}_{\scriptscriptstyle{>0}}(n)$;
					\item[(ii)] $A\mapsto A^T$;
					\item[(iii)] $A\mapsto Q^TAQ$, where $Q\in\mathbb{R}^{n\times n}$ is a permutation matrix; and
					\item[(iv)] $A\mapsto A+D$, where $D\in\mathcal{D}(n)$ whose diagonal entries are nonnegative linear combinations of the diagonal entries of $A$.
				\end{itemize}
			\end{itemize}
		\end{itemize}
		Moreover, the statement is also true for the case 1 if the class P$_0$ is replaced by any of: P, P$_0^+$, and for case 2 if the class P$_0$ is replaced by any of: P, P$_0^+$, P$_{0k}$, P$_k$, P$_{0k}^+$, for $k\geq4$.
	\end{theorem}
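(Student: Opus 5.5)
We would first prove the easy direction, (2)$\Rightarrow$(1). Each listed map sends P$_0$ into P$_0$, and this is checked principal minor by principal minor. For $A\mapsto FAE$, the principal minor indexed by $\alpha$ is multiplied by $\prod_{i\in\alpha}f_ie_i>0$. Transposition and permutation similarity only permute the principal minors. For $A\mapsto A+D$ with $D\geq0$ diagonal, multilinearity of the determinant in the diagonal gives $\det(A+D)[\alpha]=\sum_{\beta\subseteq\alpha}\bigl(\prod_{i\in\alpha\setminus\beta}d_i\bigr)\det A[\beta]\geq0$. In Case 1 the $2\times2$ map (iii) is handled by a direct computation. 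Its image has diagonal entries $k_1a_{11}+k_2a_{22}\geq0$ and $t_1a_{11}+t_2a_{22}\geq0$. Its determinant is $(k_1a_{11}+k_2a_{22})(t_1a_{11}+t_2a_{22})-a_{12}a_{21}$, and since $a_{12}a_{21}\leq a_{11}a_{22}$ it suffices that the quadratic form $(k_1x+k_2y)(t_1x+t_2y)-xy$ is nonnegative on $\mathbb{R}^2_{\scriptscriptstyle\geq0}$. This is a $2\times2$ copositivity condition. The off-diagonal coefficient is $\tfrac12(k_1t_2+k_2t_1-1)$, so by the description of $\mathrm{COP}_2$ as PSD plus nonnegative, nonnegativity holds exactly when either $k_1t_2+k_2t_1\geq1$ or the discriminant condition $1-2(k_1t_2+k_2t_1)+(k_1t_2-k_2t_1)^2\leq0$ holds.

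For the converse we would follow the strategy of Theorem~\ref{Theorem_BHJ}, now using only into-preservation together with the kernel hypothesis~\eqref{P_into_condition}. The first step is to locate the off-diagonal entries. Take the test matrices $D+tE_{ij}$ with $D\in\mathcal{D}_{\scriptscriptstyle\geq0}(n)$ and $t\in\mathbb{C}$ arbitrary. These lie in P$_0$ (after a small generic perturbation when $D$ has zeros), because every principal submatrix is triangular with nonnegative diagonal. The same holds for $D+sE_{ij}+tE_{ji}$ whenever $st\leq0$. Since $\mathcal{L}(D+tE_{ij})\in\mathrm{P}_0$ for all $t$, and the diagonal entries of a P$_0$-matrix are nonnegative, the diagonal of $\mathcal{L}(E_{ij})$ must vanish. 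Likewise, the $2\times2$ principal minors of $\mathcal{L}(D)+t\mathcal{L}(E_{ij})$ must stay nonnegative for all $t$, which forces strong restrictions on the off-diagonal pattern of $\mathcal{L}(E_{ij})$. Condition~\eqref{P_into_condition} guarantees that $\mathcal{L}$ is injective on the off-diagonal subspace.

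From this we would show that $\mathcal{L}(E_{ij})$ is supported on a single off-diagonal position $\pi(i,j)$. The induced map $(i,j)\mapsto\pi(i,j)$ is injective on ordered pairs and must respect the structure of $2$- and $3$-cycles of principal minors. The signs of the products $a_{ij}a_{ji}$ in $2\times2$ minors, and the terms $a_{ij}a_{jk}a_{ki}$ in $3\times3$ minors, must be preserved. For $n\geq3$ this should force $\pi$ to come from a vertex permutation, possibly composed with transposition. This gives the maps (ii) and (iii) in Case 2, and leaves off-diagonal scalings $c_{ij}$ with $c_{ij}c_{ji}>0$ and the cycle condition $c_{ij}c_{jk}c_{ki}=c_{ji}c_{kj}c_{ik}$. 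These conditions are what allow us to write the scalings as $FAE$ with $FE\in\mathcal{D}_{\scriptscriptstyle>0}(n)$, as in (i). In Case 1 no $3$-cycle constraint is available, and this is why the extra map (iii) survives for $n=2$.

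Finally we would treat the diagonal part. $\mathcal{L}(E_{ii})$ may have off-diagonal components, and we must show these vanish, using $\mathcal{L}(E_{ii}+tE_{jk})$ with $t$ large of either sign. Then the diagonal of $\mathcal{L}(A)$ is a linear function $M\,\mathrm{diag}(A)$. After normalizing by (i), we would show $M=I+N$ with $N\geq0$, which is the map (iv), by testing on matrices $A$ that have a zero principal minor and large off-diagonal entries. We expect this step to be the main obstacle. For $n\geq3$ one needs $3\times3$ singular P$_0$ test matrices to rule out any diagonal entry of $M$ being less than $1$. In the $2\times2$ case the same analysis instead yields exactly the copositivity inequality derived above.

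The variants for P, P$_0^+$, P$_{0k}$, P$_k$ and P$_{0k}^+$ ($k\geq4$) would be handled by the same test matrices, combined with density/closure arguments in the spirit of Lemma~\ref{lemma_preserver}.
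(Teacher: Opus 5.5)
Your direction (2)$\Rightarrow$(1) is correct, including the reduction of the $n=2$ map (iii) to copositivity of $(k_1x+k_2y)(t_1x+t_2y)-xy$. The converse, however, is a plan: its decisive steps are asserted rather than proved, and two of them cannot work as you describe them.

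\textbf{Single support.} That $\mathcal{L}(E_{ij})$ has a single entry does not follow from the constraints you extract. For $n\ge3$, take the injective map $A\mapsto A+a_{12}E_{13}$. It sends every $E_{ij}$ ($i\ne j$) to a zero-diagonal matrix, and $\mathcal{L}(D)+t\mathcal{L}(E_{ij})$ is always triangular. Also, $\mathcal{L}(D+sE_{ij}+tE_{ji})$ has nonnegative principal minors whenever $st\le0$; for $(i,j)=(1,2)$ they are $d_1d_2-st$, $d_1d_3$, $d_3(d_1d_2-st)$, and so on. Yet $\mathcal{L}(E_{12})=E_{12}+E_{13}$, and this map is not a P$_0$-preserver. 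You need test matrices that combine $E_{ij}$ with arbitrary other off-diagonal positions. One clean route is as follows. The set $\mathcal{N}$ of matrices with all principal minors zero equals $\{A:\ tA\in\mathrm{P}_0 \text{ for all } t\in\mathbb{C}\}$, so $\mathcal{L}(\mathcal{N})\subseteq\mathcal{N}$. A shortest-cycle argument shows that every linear subspace of $\mathcal{N}$ has acyclic support. Hence the maximal ones are the $n!$ spaces $U_\sigma$ of matrices strictly triangular for some ordering $\sigma$. Since $\mathcal{L}$ is bijective on zero-diagonal matrices, it permutes the $U_\sigma$, and therefore permutes their one-dimensional intersections $\mathbb{C}E_{ij}=\bigcap_{\sigma:\, i\prec_\sigma j}U_\sigma$.

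\textbf{Scalings.} This is the more serious problem: the passage from your 2- and 3-cycle conditions to the form $FAE$ is false.
\begin{itemize}
\item Already for $n=3$, the choice $c_{13}=c_{31}=-1$ with all other $c_{ij}=1$ satisfies $c_{ij}c_{ji}>0$ and $c_{ij}c_{jk}c_{ki}=c_{ji}c_{kj}c_{ik}$. But $FAE$ forces $c_{12}c_{23}c_{31}=\prod_p f_pe_p>0$.
\item For $n\ge4$, even positive 3-cycle products are not enough. $FAE$ also forces $c_{ik}c_{ki}c_{jl}c_{lj}=c_{il}c_{li}c_{jk}c_{kj}$, and this cannot be extracted from minors of order $\le3$.
\end{itemize}
To see this, let $n=4$ and let $C$ be symmetric with unit diagonal, $c_{13}=c_{24}=\sqrt{0.3}$, and all other off-diagonal entries $\sqrt{0.1}$. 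Because $c_{pq}^2c_{pr}^2\le c_{qr}^2$ for distinct $p,q,r$, on each principal submatrix of order $\le3$ we have $C\circ A=F_\alpha A[\alpha]F_\alpha$ plus a nonnegative diagonal matrix, with $F_\alpha$ positive diagonal. So $A\mapsto C\circ A$ keeps every principal minor of order $\le3$ nonnegative. However,
\[
A=\begin{bmatrix} 1 & 0 & 0.1 & 200\\ 0 & 1 & 200 & 0.1\\ 0.1 & -0.1 & 1 & 0\\ -0.1 & 0.1 & 0 & 1 \end{bmatrix}
\]
is a P-matrix with $\det A=40.98$, while $\det(C\circ A)\approx-3.01$.

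Consequently, any argument that inspects only $2\times2$ and $3\times3$ principal minors of images fails for $n\ge4$, and that is all your Case 2 sketch uses. Test matrices of order $4$, containing two disjoint 2-cycles together with the 4-cycles through them as this $A$ does, are indispensable. This also explains why the statement covers P$_{0k}$ only for $k\ge4$. Your closing claim that the variants follow from the same test matrices would also prove the P$_{03}$ version, and $A\mapsto C\circ A$ is a counterexample to that version.

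\textbf{Remaining steps.} Two further steps are only announced, not proved. One is that the position permutation comes from a vertex permutation, possibly composed with transposition. The other is that the normalized diagonal map has the form $I+N$ with $N\ge0$. As it stands, (1)$\Rightarrow$(2) is not established.
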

	
	Using Theorem~\ref{Theorem_P_into}, Hershkowitz--Johnson next classified all invertible into linear preservers of the class of P-matrices. Notice that if $\mathcal{L}$ is invertible, then condition~\eqref{P_into_condition} is automatically satisfied. Therefore, it only remains to identify the invertible linear preservers described in Theorem~\ref{Theorem_P_into}.
	
	\begin{cor}\cite{HJ86}\label{Cor_P_into}
		Let $n\geq2$, and let $\mathcal{L}:\mathbb{C}^{n\times n}\to\mathbb{C}^{n\times n}$ be an invertible linear transformation. Then the following statements are equivalent.
		\begin{itemize}
			\item[(1)] $\mathcal{L}$ maps the class of $n\times n$ P$_0$-matrices into themselves.
			\item[(2)] $\mathcal{L}$ is a composition of one or more of the following types of transformations:
			\begin{itemize}
				\item[(a)] Case 1: $n=2$.
				\begin{itemize}
					\item[(i)] $A\mapsto FAE$, where $F,E\in\mathcal{D}(2)$ with $FE\in\mathcal{D}_{\scriptscriptstyle{>0}}(2)$;
					\item[(ii)] $A\mapsto A^T$; and
					\item[(iii)] $\begin{bmatrix}
						a_{11} & a_{12} \\
						a_{21} & a_{22}
					\end{bmatrix} \mapsto \begin{bmatrix}
						k_1a_{11} + k_2a_{22} & a_{12} \\
						a_{21} & t_1a_{11} + t_2a_{22}
					\end{bmatrix}$, with $k_1, k_2, t_1, t_2 \geq0$ and $k_1t_2\neq k_2t_1$ such that either
					\begin{align*}
					     k_1t_2 + k_2t_1 \geq 1 \quad \text{or} \quad 1-2(k_1t_2+k_2t_1)+(k_1t_2-k_2t_1)^2\leq0.
					\end{align*}
				\end{itemize}
				
				\item[(b)] Case 2: $n\geq3$.
				\begin{itemize}
					\item[(i)] $A\mapsto FAE$, where $F,E\in\mathcal{D}(n)$ with $FE\in\mathcal{D}_{\scriptscriptstyle{>0}}(n)$;
					\item[(ii)] $A\mapsto A^T$;
					\item[(iii)] $A\mapsto Q^TAQ$, where $Q\in\mathbb{R}^{n\times n}$ is a permutation matrix; and
					\item[(iv)] $A\mapsto A+D$, with the map $\diag(A)\mapsto \diag(A+D)$ invertible. Here $D\in\mathcal{D}(n)$ whose diagonal entries are nonnegative linear combinations of the diagonal entries of $A$.
				\end{itemize}
			\end{itemize}
		\end{itemize}
		Moreover, the statement is also true for the case 1 if the class P$_0$ is replaced by any of: P, P$_0^+$, and for case 2 if the class P$_0$ is replaced by any of: P, P$_0^+$, P$_{0k}$, P$_k$, P$_{0k}^+$, for $k\geq4$.
	\end{cor}
	We conclude by noting some remarks of Hershkowitz--Johnson~\cite{HJ86} concerning into preservers of P-matrices over the real matrices. They pointed out that the proof of Theorem~\ref{Theorem_P_into} (and consequently Corollary~\ref{Cor_P_into}) does not hold for linear operators on the space of $n\times n$ real matrices when $n=2$ (see \cite[Example 3]{HJ86}). However, for the case $n\geq3$, they believed that Theorem~\ref{Theorem_P_into} (and hence Corollary~\ref{Cor_P_into}) may indeed remain valid in the real setting.
	
	\subsection{Semipositive matrices}
	
	A matrix $A\in\mathbb{R}^{m\times n}$ is called \textit{semipositive} (SP) if there exists a vector $\mathbf{x}\in\mathbb{R}^n_{\scriptscriptstyle\geq0}$ such that $A\mathbf{x}\in\mathbb{R}^{n}_{\scriptscriptstyle>0}$. A matrix $A$ is called \textit{minimally semipositive} (MSP) if it is semipositive and no submatrix obtained by deleting column of $A$ remains semipositive. In particular, an $m\times n$ MSP matrix must satisfy $m\geq n$. Further details on these classes can be found in the book~\cite{Johnson_Smith_Tsatsomeros20}.
	
	The class of semipositive matrices contains several notable positivity classes, including real P-matrices (see \cite[Theorem 4.3.6]{Johnson_Smith_Tsatsomeros20}), positive definite matrices, and entrywise positive matrices. Observe that not every SP matrix need to be a P-matrix. For example, any square matrix with all entries positive and a nonpositive determinant is an SP matrix, but not a P-matrix. However, real P-matrices admit a characterization via semipositivity.
	
%
%
%
	
	\begin{theorem}\cite[Theorem 1]{Nowaihi88}
		$A\in\mathbb{R}^{n\times n}$ is a $P$-matrix if and only if $SAS$ is semipositive for every signature matrix $S\in\mathcal{D}(n)$.
	\end{theorem}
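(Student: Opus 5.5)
The plan is to combine the Gale--Nikaido characterization (Theorem~\ref{P_SNP}) with the standard theorem of the alternative for semipositivity (Ville's theorem). Ville's theorem says that a real matrix $B$ is \emph{not} semipositive if and only if there is $\mathbf{0}\neq\mathbf{z}\in\mathbb{R}^n_{\scriptscriptstyle\geq0}$ with $B^T\mathbf{z}\leq\mathbf{0}$ entrywise. I would quote it as a Gordan-type alternative, or derive it quickly from Farkas' lemma. Two elementary facts will also be used:
\begin{itemize}
\item[(i)] For every signature matrix $S$, the principal minors of $SAS$ coincide with those of $A$, since $\det(S[\alpha]A[\alpha]S[\alpha])=\det A[\alpha]$.
\item[(ii)] $A$ is a P-matrix if and only if $A^T$ is.
\end{itemize}

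\emph{Forward direction.} Suppose $A$ is a P-matrix and fix a signature matrix $S$. By (i), $B:=SAS$ is a P-matrix, so by (ii) $B^T$ is as well. I then show that any P-matrix $B$ is semipositive. If it were not, Ville's theorem would give $\mathbf{0}\neq\mathbf{z}\geq\mathbf{0}$ with $B^T\mathbf{z}\leq\mathbf{0}$. Then every entry of $\mathbf{z}\circ(B^T\mathbf{z})$ is a nonnegative number times a nonpositive number, hence $\leq0$. This contradicts Theorem~\ref{P_SNP} applied to the P-matrix $B^T$.

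\emph{Converse.} I argue by contraposition. Suppose $A$ is not a P-matrix; by (ii), neither is $A^T$. Theorem~\ref{P_SNP} then supplies $\mathbf{0}\neq\mathbf{y}\in\mathbb{R}^n$ such that $y_i(A^T\mathbf{y})_i\leq0$ for all $i$. Define the signature matrix $S=\diag(s_1,\ldots,s_n)$ by
\begin{align*}
s_i:=\begin{cases} \operatorname{sign}(y_i), & \text{if } y_i\neq0,\\ -\operatorname{sign}\big((A^T\mathbf{y})_i\big), & \text{if } y_i=0,\end{cases}
\end{align*}
where in the second case we take $s_i=1$ if $(A^T\mathbf{y})_i=0$. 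Put $\mathbf{z}:=S\mathbf{y}=|\mathbf{y}|$, which lies in $\mathbb{R}^n_{\scriptscriptstyle\geq0}$ and is nonzero. Then
\begin{align*}
(SAS)^T\mathbf{z}=SA^TS\,S\mathbf{y}=SA^T\mathbf{y},
\end{align*}
whose $i$-th entry is $s_i(A^T\mathbf{y})_i$. This entry is $\leq0$ in both cases: when $y_i\neq0$ it has the sign of $y_i(A^T\mathbf{y})_i\leq0$, and when $y_i=0$ it is $\leq0$ by the choice of $s_i$. Hence $(SAS)^T\mathbf{z}\leq\mathbf{0}$, and Ville's theorem shows that $SAS$ is not semipositive, as required.

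The only delicate point is the treatment of the indices where $y_i=0$, since Gale--Nikaido gives no sign information there. This is handled by the freedom in choosing $s_i$ at those indices. The other potential obstacle is that Ville's theorem is not stated earlier in the paper, so I would either cite it or give a two-line derivation from Farkas' lemma.
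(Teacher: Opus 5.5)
Your proof is correct and complete. The survey does not prove this theorem; it only cites \cite{Nowaihi88}, so there is no in-paper argument to compare against. Your route is the natural self-contained one: it combines the Gale--Nikaido sign non-reversal characterization (Theorem~\ref{P_SNP}, which the survey does state) with Ville's theorem of the alternative applied to $(SAS)^T=SA^TS$. The treatment of the indices with $y_i=0$ is exactly what is needed.

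Both of your directions are really a single equivalence. By Ville, $SAS$ fails to be semipositive for some signature $S$ if and only if there exist $\mathbf{0}\neq\mathbf{y}\in\mathbb{R}^n$ and a signature $S$ with $S\mathbf{y}\geq\mathbf{0}$ and $SA^T\mathbf{y}\leq\mathbf{0}$ (take $\mathbf{y}=S\mathbf{z}$). This in turn holds if and only if $A^T$ reverses the sign of some nonzero $\mathbf{y}$:
\begin{itemize}
\item[] one implication is your choice of $s_i$;
\item[] the other is the identity $y_i(A^T\mathbf{y})_i=(s_iy_i)\,(s_i(A^T\mathbf{y})_i)$.
\end{itemize}
So your forward direction is the same computation read the other way. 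That direction passes through two facts: $SAS$ is a P-matrix, and P-matrices are semipositive. The survey itself cites the latter from \cite[Theorem 4.3.6]{Johnson_Smith_Tsatsomeros20}.

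Either citing Ville's theorem or deriving it as you propose is fine. For the derivation, apply Farkas' lemma to the system $B\mathbf{x}\geq\mathbf{1}$, $\mathbf{x}\geq\mathbf{0}$, which is feasible exactly when $B$ is semipositive, after scaling.
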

	
	We next proceed to discuss the linear preservers of the classes of semipositive and minimally semipositive matrices. To present the main results, we first recall some definitions.
	\begin{defn}
		\begin{itemize}
			\item[(1)] A matrix $A\in\mathbb{R}^{n\times n}$ is called \textit{row positive} (RP) if $A\in\mathbb{R}^{n\times n}_{\scriptscriptstyle\geq0}$ and each row of $A$ contains at least one positive entry. 
			
			\item[(2)] A matrix $A\in\mathbb{R}^{m\times n}$ is called \textit{monotone} if $A\mathbf{x}\in\mathbb{R}^{m}_{\scriptscriptstyle\geq0}$, then $\mathbf{x}\in\mathbb{R}^n_{\scriptscriptstyle\geq0}$. The terminology ``monotone'' is motivated by the fact that $A\mathbf{x}\geq A\mathbf{y}$ (entrywise) implies $\mathbf{x}\geq\mathbf{y}$ (entrywise). 
		\end{itemize}
	\end{defn}
	
	Note that every nonnegative monomial matrix is row positive, but the converse need not hold. Finally, observe that if $A$ is a square monotone matrix, then $A^{-1}$ exists, and moreover, $A^{-1}\in\mathbb{R}^{n\times n}_{\scriptscriptstyle\geq0}$. In this case, the matrix $A$ is also referred to as \textit{inverse nonnegative} (IN).
	
	
	In 2016, Dorsey--Gannon--Jacobson--Johnson--Turnansky~\cite{Dorsey_et_al_16} examined onto and into linear preservers of semipositivity and minimally semipositivity. Their main results are summarized as follows.
	
	\begin{theorem}\cite[Theorems 2.4 and 2.11, Corollary 2.7]{Dorsey_et_al_16}\label{Dorsey}
		Let $\mathcal{L}:\mathbb{R}^{m\times n}\to\mathbb{R}^{m\times n}$ be a linear operator of the form $\mathcal{L}(A)=XAY$, where $X\in\mathbb{R}^{m\times m}$ and $Y\in\mathbb{R}^{n\times n}$ are fixed matrices. Then
		\begin{itemize}
			\item[(1)] $\mathcal{L}$ is an onto linear preserver of SP if and only if $X$ and $Y$ are nonnegative monomial matrices, or $-X$ and $-Y$ are nonnegative monomial matrices. Moreover, $\mathcal{L}$ is an into linear preserver of SP if and only if either $X$ is RP and $Y$ is IN, or $-X$ is RP and $-Y$ is IN.
			
			\item[(2)] For  $m>n$, $\mathcal{L}$ is an onto linear preserver of MSP if and only if $X$ and $Y$ are nonnegative monomial matrices, or $-X$ and $-Y$ are nonnegative monomial matrices. Moreover, for $m>n$, $\mathcal{L}$ is an into linear preserver of MSP if and only if $X$ is a nonnegative monomial matrix and $Y$ is IN, or $-X$ is a nonnegative monomial matrix and $-Y$ is IN.
		\end{itemize}
	\end{theorem}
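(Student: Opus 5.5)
The plan is to handle the semipositive class in part (1) directly from the definition, treating the into statement first and then deducing the onto statement from it. Part (2) is then reduced to part (1) via a left-inverse characterization of MSP matrices.

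\emph{Sufficiency in (1).} Suppose $X$ is RP and $Y$ is IN. Let $A$ be SP, with $\mathbf{x}\geq\mathbf{0}$ and $A\mathbf{x}>\mathbf{0}$. Put $\mathbf{y}:=Y^{-1}\mathbf{x}$, which is nonnegative because $Y^{-1}\geq0$. Then $XAY\mathbf{y}=X(A\mathbf{x})$. Since $X$ is nonnegative with a positive entry in every row and $A\mathbf{x}>\mathbf{0}$, this vector is positive. The case where $-X$ is RP and $-Y$ is IN follows because $(-X)A(-Y)=XAY$.

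\emph{Necessity in (1).} I would use rank-one test matrices. For any $\mathbf{w}\in\mathbb{R}^n$ with at least one positive entry, the matrix $A=\mathbf{1}\mathbf{w}^T$ is SP. Its image is $(X\mathbf{1})(Y^T\mathbf{w})^T$. A rank-one matrix $\mathbf{u}\mathbf{v}^T$ is SP if and only if $\mathbf{u}$ is strictly of one sign $s$ and $\mathbf{v}$ has an entry of sign $s$. Hence $X\mathbf{1}$ is strictly of one sign; after replacing $(X,Y)$ by $(-X,-Y)$ we may take it positive.

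\emph{Consequence for $Y$.} With $X\mathbf{1}>\mathbf{0}$, every $\mathbf{w}$ with a positive entry must have $Y^T\mathbf{w}$ with a positive entry. Applying this to $-\mathbf{v}$ gives: $Y^T\mathbf{v}\geq\mathbf{0}$ implies $\mathbf{v}\geq\mathbf{0}$. So $Y^T$ is monotone, hence invertible with $(Y^T)^{-1}\geq0$, and therefore $Y$ is IN.

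\emph{Consequence for $X$.} Take $A=\mathbf{u}\mathbf{e}_1^T$ with $\mathbf{u}>\mathbf{0}$; this is SP. Its image forces $X\mathbf{u}$ to be strictly of one sign for every $\mathbf{u}>\mathbf{0}$. The positive orthant is connected and $X\mathbf{1}>\mathbf{0}$, so $X\mathbf{u}>\mathbf{0}$ for all $\mathbf{u}>\mathbf{0}$. Letting $\mathbf{u}$ tend to the standard basis vectors gives $X\geq0$. Since $X\mathbf{1}>\mathbf{0}$, every row of $X$ is nonzero, so $X$ is RP.

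\emph{Onto part of (1).} SP contains a basis of $\mathbb{R}^{m\times n}$, for instance perturbations of the positive matrix $J$. Hence an onto preserver is invertible, so $X$ and $Y$ are invertible. Moreover $\mathcal{L}^{-1}(A)=X^{-1}AY^{-1}$ is again an (onto, hence into) preserver. Applying the into result to both maps gives $X,X^{-1}\geq0$ and $Y^{-1},Y\geq0$, with matching signs. A nonnegative matrix with nonnegative inverse is a monomial matrix, which yields the stated form. The converse is immediate.

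\emph{Part (2), setup.} I would invoke the known characterization (Johnson et al.) that, for $m>n$, an $m\times n$ matrix is MSP if and only if it is SP and has a nonnegative left inverse. I would take this from the semipositivity literature rather than reprove it here.

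\emph{Sufficiency in (2).} Let $X$ be nonnegative monomial and $Y$ be IN, and let $B\geq0$ be a left inverse of $A$. Then $Y^{-1}BX^{-1}\geq0$ is a left inverse of $XAY$. Also $XAY$ is SP by part (1), since a nonnegative monomial matrix is RP.

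\emph{Necessity in (2).} I would use MSP test matrices $A=\begin{bmatrix} I_n\\ C\end{bmatrix}$ with $C\geq0$, together with their row permutations and positive row scalings. These span enough of the space to rerun the rank-one style arguments from part (1) and recover "$X$ RP and $Y$ IN" up to a global sign.

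\emph{Main obstacle.} The hard step is upgrading "$X$ RP" to "$X$ monomial". The idea is this: if some row of $X$ had two positive entries, or some column of $X$ were zero, I would choose $C$ (possibly zero) and a row ordering so that $XAY$ loses rank $n$, or acquires a row lying in the positive cone spanned by other rows. Either outcome obstructs a nonnegative left inverse. Equivalently, deleting a column of the image would still leave an SP matrix, contradicting minimality. Getting a uniform construction of such $C$ that works for all $m>n$ is where I expect the real work to lie.

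\emph{Onto part of (2).} This then follows exactly as in (1), by applying the into result to $\mathcal{L}^{-1}$.
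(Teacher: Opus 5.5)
Your argument for part (1) is correct: the rank-one tests $\mathbf{1}\mathbf{w}^T$ and $\mathbf{u}\mathbf{e}_1^T$ force $\pm X$ to be RP and $\pm Y$ to be IN. The onto case then follows by applying this to $\mathcal{L}^{-1}$; the matching of signs is immediate from $XX^{-1}=I$.

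The genuine gap is in part (2). The \emph{necessity} half of the into statement is false, so no choice of test matrices can complete the step you flag as the main obstacle. The paper does not prove this theorem; it cites it and then, right after, records the failure via \cite[Example 2.8]{Choudhury_et_al_18}. Take $m=2$, $n=1$, $X=\begin{bmatrix}1&1\\1&1\end{bmatrix}$ and $Y=[1]$. A $2\times 1$ matrix is SP exactly when it is a positive column, and deleting its only column never leaves an SP matrix. So the $2\times1$ MSP matrices are precisely the positive columns. Hence $XAY=(a_1+a_2)(1,1)^T$ is again MSP, although $X$ has a row with two positive entries and is not monomial.

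More generally, for $n=1$ every RP matrix $X$ (with $Y>0$) is an into MSP preserver. None of your proposed obstructions can arise for a positive column: loss of rank $n$, a row lying in the cone of the others, or an SP column-deleted submatrix. So for into MSP preservers you can only establish the sufficiency direction, which your left-inverse argument handles correctly. The stated equivalence itself cannot be proved.

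This also undermines the rest of your plan for (2). You derive the onto half by applying the (false) into characterization to $\mathcal{L}^{-1}$, so as written it is unsupported. The onto statement is not contradicted by the example, but it needs its own argument. For instance, first check that MSP spans $\mathbb{R}^{m\times n}$, so an onto preserver is bijective. Then, if you can show directly that every into MSP preserver has $\pm X\geq 0$ and $\pm Y^{-1}\geq 0$ (with a common sign), applying this to both $\mathcal{L}$ and $\mathcal{L}^{-1}$ forces monomiality.

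Moreover, the tools you propose for necessity do not transfer as claimed. For $n\geq 2$ every MSP matrix has full column rank $n$, so the rank-one test matrices from part (1) are not available. Also, $\begin{bmatrix} I_n\\ C\end{bmatrix}$ with $C\geq 0$ is MSP only when every row of $C$ is nonzero; in particular it is not MSP for $C=0$ when $m>n$.
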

	In~\cite{Choudhury_et_al_18}, Choudhury--Kannan--Sivakumar identified certain ambiguities in the preceding results. First, they provided a new proof of Theorem~\ref{Dorsey}(1) concerning into linear preservers of SP matrices (see \cite[Theorem 2.16]{Choudhury_et_al_18}). Second, they showed that the statement of Theorem~\ref{Dorsey}(2) does not hold in the setting of into linear preservers of MSP matrices, as shown by the example below.
	 
	\begin{example}\cite[Example 2.8]{Choudhury_et_al_18}
		Consider the following matrices
		\begin{align*}
			X=\begin{bmatrix}
				1 & 1 \\
				1 & 1
			\end{bmatrix} ~ \text{and} ~ Y=[1].
		\end{align*}
		Observe that $2\times1$ minimally semipositive matrices consist precisely of positive column vectors. Therefore, it follows that the map $\mathcal{L}(A)=XAY$ is an into preserver of minimally semipositive matrices. However, the matrix $X$ is not monomial.
	\end{example}
	
	The above example shows that the converse direction in Theorem~\ref{Dorsey}(2) fails for MSP matrices. In the same work, Dorsey et al. also classified general onto linear preservers of semipositivity, showing that they must be of standard form.
	
	\begin{theorem}\cite[Theorem 3.5]{Dorsey_et_al_16}\label{Theorem_SP_onto}
		A linear map $\mathcal{L}:\mathbb{R}^{m\times n}\to\mathbb{R}^{m\times n}$ is an onto preserver of semipositivity if and only if
		\begin{align*}
			\mathcal{L}(A)=XAY,
		\end{align*}
		for some monomial matrices $X\in\mathbb{R}^{m\times m}_{\scriptscriptstyle\geq0}$ and $Y\in\mathbb{R}^{n\times n}_{\scriptscriptstyle\geq0}$. 
	\end{theorem}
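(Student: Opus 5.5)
The converse direction is immediate. If $X,Y$ are nonnegative monomial, then $X^{-1},Y^{-1}$ are also nonnegative monomial. If $A\mathbf{x}>0$ with $\mathbf{x}\geq0$, then $\mathbf{y}:=Y^{-1}\mathbf{x}\geq0$ satisfies $XAY\mathbf{y}=XA\mathbf{x}>0$, because a nonnegative monomial matrix maps positive vectors to positive vectors. Applying the same argument to $\mathcal{L}^{-1}(A)=X^{-1}AY^{-1}$ shows that $\mathcal{L}(\mathrm{SP})=\mathrm{SP}$.

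For the forward direction, suppose $\mathcal{L}(\mathrm{SP})=\mathrm{SP}$. Since SP is open and contains a basis of $\mathbb{R}^{m\times n}$, the argument in the proof of Corollary~\ref{Cor_COP_onto} shows that $\mathcal{L}$ is bijective, and $\mathcal{L}^{-1}$ is again an onto SP-preserver. By Lemma~\ref{lemma_preserver}, $\mathcal{L}$ also preserves the closure $\overline{\mathrm{SP}}$. Since SP is an open set, $\mathcal{L}$ preserves its boundary as well. The key step is to identify intrinsically, in terms of the cone $\overline{\mathrm{SP}}$ alone, the rank-one matrices $\mathbf{e}_i\mathbf{e}_j^T$ up to positive scaling, or more flexibly the rank-one sets $\{\mathbf{u}\mathbf{v}^T\}$. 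I would proceed in three steps.

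\textbf{Step 1: describe $\overline{\mathrm{SP}}$ and its complement.} By a Gordan/Ville-type alternative, $A\notin\mathrm{SP}$ if and only if there exists $\mathbf{0}\neq\mathbf{y}\geq0$ with $\mathbf{y}^TA\leq0$. Hence the complement $\mathbb{R}^{m\times n}\setminus\mathrm{SP}=\bigcup_{\mathbf{y}\geq0,\ \mathbf{y}\neq0}\{A : A^T\mathbf{y}\leq0\}$ is a union of closed convex cones indexed by the simplex. The set $\mathcal{N}:=-(\mathbb{R}^{m\times n}\setminus\mathrm{SP})$ is therefore preserved by $\mathcal{L}$. This set is a union of polyhedral cones, and $\mathcal{L}$ must permute its maximal convex subsets. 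I would identify the maximal convex subsets as the cones $K_{\mathbf{y}}=\{A : A^T\mathbf{y}\leq0\}$ for $\mathbf{y}$ in the simplex. I would then check that the extremal ones, those that are not contained in unions of nearby ones, correspond to $\mathbf{y}=\mathbf{e}_i$, i.e.\ $K_i=\{A:\text{row } i \text{ of } A\leq0\}$. Consequently $\mathcal{L}$ permutes the cones $K_1,\dots,K_m$, equivalently it maps the sets $R_i=\{A:\text{row } i\leq0\}$ onto one another.

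\textbf{Step 2: rows and columns.} From the permutation $\sigma$ of the $K_i$, the linear spans of the lineality data give the following: the subspace $\{A:\text{row } i=0\}=K_i\cap(-K_i)$ maps to the subspace $\{\text{row }\sigma(i)=0\}$. Intersecting over $i\neq k$ shows that matrices supported on row $k$ map to matrices supported on row $\sigma(k)$. Thus $\mathcal{L}(\mathbf{e}_k\mathbf{z}^T)=\mathbf{e}_{\sigma(k)}(M_k\mathbf{z})^T$ for invertible $M_k\in\mathbb{R}^{n\times n}$. Since $\mathcal{L}(K_k)=K_{\sigma(k)}$, each $M_k$ maps $\mathbb{R}^n_{\leq0}$ onto itself, so $M_k$ is a nonnegative monomial matrix. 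Next, apply the SP condition to matrices with two nonzero rows, $\mathbf{e}_k\mathbf{z}^T+\mathbf{e}_l\mathbf{w}^T$ padded with the other rows chosen positive in suitable coordinates. This should force all $M_k$ to share the same permutation pattern and to differ only by a positive diagonal scaling: $M_k=d_kY^T$ with $d_k>0$. The test is semipositivity of $2\times n$ matrices such as $\begin{bmatrix}1&-1\\-1&1\end{bmatrix}$-type blocks, which fail to be SP, against shifted versions, which are SP. Taking $X=P_\sigma\diag(d_k)$ then yields $\mathcal{L}(A)=XAY$.

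I expect Step 1 to be the main obstacle: proving rigorously that an invertible linear map preserving the non-convex set $\mathbb{R}^{m\times n}\setminus\mathrm{SP}$ must permute the specific cones $K_{\mathbf{e}_i}$. Instead of a maximal-convex-subset argument, I would first try a dimension count. The cones $K_{\mathbf{e}_i}$ are exactly the pieces of $\partial\mathrm{SP}$ containing relatively open subsets of full-dimensional faces of the form $\{A:\text{row } i\le 0\}$, so $\mathcal{L}$, as a homeomorphism preserving $\partial\mathrm{SP}$ and its piecewise-linear structure, must map flat pieces onto flat pieces. The compatibility argument in Step 2 also needs care when $n=1$ or $m=1$, and these cases should be handled directly.
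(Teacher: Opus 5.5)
Your converse direction, the bijectivity of $\mathcal{L}$, and the use of Ville's alternative are all correct. Step~2 would also go through (two-row tests do force a common permutation pattern and a common scaling) once you know that $\mathcal{L}$ permutes $K_{\mathbf{e}_1},\dots,K_{\mathbf{e}_m}$.

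The gap is Step~1, which is where all the difficulty lies, and the identification you propose there is false. The maximal convex subsets of $\mathbb{R}^{m\times n}\setminus\mathrm{SP}$ are not just the cones $K_{\mathbf{y}}=\{A: A^T\mathbf{y}\le 0\}$. For $m=n=2$, consider the segment
\[
M_t=\begin{bmatrix} t & -t\\ -(1-t) & 1-t\end{bmatrix},\qquad t\in[0,1].
\]
Every $M_t$ is non-SP, since the entries of $M_t\mathbf{x}$ are $t(x_1-x_2)$ and $(1-t)(x_2-x_1)$. Yet $\mathbf{y}^TM_1\le 0$ forces $y_1=0$ and $\mathbf{y}^TM_0\le 0$ forces $y_2=0$, so no single $K_{\mathbf{y}}$ contains the segment. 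Padding with zero columns and rows of ones gives the same phenomenon for all $m,n\ge 2$. So "$\mathcal{L}$ permutes maximal convex subsets" does not yield "$\mathcal{L}$ permutes the $K_{\mathbf{y}}$".

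Even granting that, your criterion cannot single out $\mathbf{y}=\mathbf{e}_i$:
\begin{itemize}
\item Each $K_{\mathbf{y}}$ is linearly isomorphic to $K_{\mathbf{e}_1}$, via $A\mapsto SA$ with $S$ invertible and first row $\mathbf{y}^T$. So no property of a single cone distinguishes them.
\item Non-vertex $\mathbf{y}$ also have private points: $\begin{bmatrix}1&-1\\-1&1\end{bmatrix}\in K_{\mathbf{y}}$ only for $\mathbf{y}\propto(1,1)^T$.
\end{itemize}
The fallback is not right as stated either. $K_{\mathbf{e}_i}$ is full-dimensional, so it is not a piece of $\partial\mathrm{SP}$. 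The flat pieces you have in mind are the facets of $K_{\mathbf{e}_i}$ meeting $\partial\mathrm{SP}$, and these lie in the coordinate hyperplanes $\{a_{ij}=0\}$. Such an argument would therefore at best show that $\mathcal{L}$ permutes these hyperplanes. Even that requires proving that no other hyperplane contains a relatively open subset of $\partial\mathrm{SP}$, which is not obvious: $\partial\mathrm{SP}$ also has curved parts, for instance it is locally $\{\det=0\}$ near a singular square $A$ with positive left and right null vectors.

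A much simpler intrinsic invariant closes the gap:
\[
\{A: A+\mathrm{SP}\subseteq\mathrm{SP}\}=\mathbb{R}^{m\times n}_{\scriptscriptstyle\geq0}.
\]
The inclusion $\supseteq$ is immediate. For $\subseteq$, suppose $a_{ij}<0$. Let $C$ have $i$th row nonpositive with $c_{ij}=0$, and set $c_{kj}=a_{kj}+1$ for $k\neq i$. Then $B=C-A$ has a positive $j$th column, so $B\in\mathrm{SP}$, while $A+B=C\notin\mathrm{SP}$.

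Since $\mathcal{L}$ and $\mathcal{L}^{-1}$ both preserve SP, $\mathcal{L}$ is an automorphism of the nonnegative orthant. Hence $\mathcal{L}(E_{ij})=c_{ij}E_{\tau(i,j)}$ for matrix units $E_{ij}$, with $c_{ij}>0$ and $\tau$ a permutation of positions. Three test matrices then finish the proof:
\begin{itemize}
\item A matrix whose $i$th row is $-\mathbf{1}^T$ and whose other entries are large and positive is not SP. Its image is SP (take $\mathbf{x}=\mathbf{1}$) unless $\tau$ sends row $i$ into a single row.
\item A matrix whose $j$th column is $\mathbf{1}$ and whose other entries are $-M$ is SP. For large $M$ its image is not SP unless $\tau$ keeps column $j$ in a single column.
\item Padded blocks $\begin{bmatrix}1&-1\\-1&t\end{bmatrix}$, which are SP iff $t>1$, force $c_{ij}c_{kl}=c_{il}c_{kj}$, i.e.\ $(c_{ij})$ has rank one.
\end{itemize}
This yields $X$ and $Y$, and the last two tests are essentially your Step~2.
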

	
	In view of the above result, Dorsey et al. asked whether all into linear preservers of semipositivity must also be of the standard form. They showed that this is not the case by constructing the following counterexample.
	\begin{example}\cite{Dorsey_et_al_16}
		Let $m,n\geq2$ be integers. Consider the following linear map
		\begin{align*}
			\mathcal{L}\left(\begin{bmatrix}
				a_{11} & a_{12} & \cdots & a_{1n} \\
				a_{21} & a_{22} & \cdots & a_{2n} \\
				\vdots & \vdots & \cdots & \vdots \\
				a_{m1} & a_{m2} & \cdots & a_{mn}
			\end{bmatrix}\right) = \begin{bmatrix}
				a_{11} & a_{12} & \cdots & a_{1n} \\
				a_{11} & a_{12}-a_{11} & \cdots & a_{1n}-\displaystyle\sum_{i=1}^{n-1}a_{1i} \\
				\vdots & \vdots & \cdots & \vdots \\
				a_{11} & a_{12}-a_{11} & \cdots & a_{1n}-\displaystyle\sum_{i=1}^{n-1}a_{1i}
			\end{bmatrix}.
		\end{align*}
		Note that every semipositive matrix $A=(a_{ij})\in\mathbb{R}^{m\times n}$ must have at least one positive entry in each row, and in particular in the first row. Thus, we may assume that $a_{11},\ldots,a_{1k}\leq0$ for some $k\in[n-1]$ and $a_{1,k+1}>0$. Observe that the $(k+1)$th column of $\mathcal{L}(A)$ satisfies
		\begin{align*}
			(\mathcal{L}(A))_{i,k+1}=a_{1,k+1}-\displaystyle\sum_{i=1}^{k}a_{1i}>0 \quad \forall ~ i\in[m].
		\end{align*}
		Therefore, the vector $\mathcal{L}(A)\mathbf{e}^{(k+1)}$ has all entries positive, where $\mathbf{e}^{(k+1)}$ is the unit vector whose $(k+1)$th entry is $1$ and all remaining entries are $0$. This shows that $\mathcal{L}$ is indeed an into preserver of semipositivity. However, it cannot be expressed in the form $XAY$, since
		\begin{align*}
			\mathcal{L}\left(\begin{bmatrix}
				1 & 0 & \cdots & 0 \\
				0 & 0 & \cdots & 0 \\
				\vdots & \vdots & \ddots & \vdots \\
				0 & 0 & \cdots & 0
			\end{bmatrix}\right) = \begin{bmatrix}
				1 & \phantom{-}0 & \cdots & \phantom{-}0 \\
				1 & -1 & \cdots & -1 \\
				\vdots & \vdots & \ddots & \vdots \\
				1 & -1 & \cdots & -1
			\end{bmatrix},
		\end{align*}
		where the input matrix has rank 1 but the output matrix has rank 2. Since any map of the form $A\mapsto XAY$ satisfies $\rk (XAY)\leq \rk A$, this provides a contradiction.
	\end{example}
	
	On the other hand, Dorsey et al.~\cite{Dorsey_et_al_16} conjectured that, under the additional assumption that $\mathcal{L}$ is invertible, every into linear preserver of the class of $m\times n$ semipositive matrices must be of the form $XAY$ for $X$ row positive and invertible, and $Y$ inverse nonnegative. Subsequently, this conjecture was proved in 2021 by Jayaraman--Mer~\cite{Jayaraman_mer_21} for $m\geq n\geq2$.
	
	\begin{theorem}\cite[Theorem 3.3 and Remark 3.5, and Theorem 3.11]{Jayaraman_mer_21}
		Let $m\geq n\geq2$, and let $\mathcal{L}:\mathbb{R}^{m\times n}\to\mathbb{R}^{m\times n}$ be an invertible linear transformation. Then $\mathcal{L}$ is an into semipositivity preserver if and only if
		\begin{align*}
			\mathcal{L}(A) = XAY,
		\end{align*}
		where $X\in\mathbb{R}^{m\times m}$ is invertible and RP, and $Y\in\mathbb{R}^{n\times n}$ is IN.
	\end{theorem}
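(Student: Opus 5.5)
The plan is to prove sufficiency directly, and for necessity to reduce to maps of standard form so that Theorem~\ref{Dorsey}(1) applies.

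\textbf{Sufficiency.} Let $X$ be RP and invertible, let $Y$ be IN, and let $A$ be SP, say $A\mathbf{x}>0$ with $\mathbf{x}\geq 0$. Put $\mathbf{y}:=Y^{-1}\mathbf{x}$. Then $\mathbf{y}\geq0$ because $Y^{-1}$ is nonnegative. Moreover
\[
XAY\mathbf{y}=X(A\mathbf{x})>0,
\]
since every row of the nonnegative matrix $X$ has a positive entry and $A\mathbf{x}>0$. Hence $XAY$ is SP.

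\textbf{Necessity: reformulating the hypothesis.} The key observation is that the into condition for an invertible $\mathcal{L}$ is equivalent to a condition on $\mathcal{L}^{-1}$ acting on the complement. Namely, $\mathcal{L}(\mathrm{SP})\subseteq\mathrm{SP}$ holds if and only if $\mathcal{L}^{-1}$ maps non-SP matrices to non-SP matrices. Indeed, if $\mathcal{L}^{-1}(B)$ were SP, then $B=\mathcal{L}(\mathcal{L}^{-1}(B))$ would be SP.

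By Ville's (Gordan-type) alternative, $A$ is not SP if and only if there is $\mathbf{0}\neq\mathbf{y}\in\mathbb{R}^m_{\geq0}$ with $\mathbf{y}^TA\leq 0$ entrywise. So the non-SP set is the union, over $\mathbf{y}\ge0$, of the closed cones $\{A:\mathbf{y}^TA\le0\}$. In particular it contains the linear subspaces
\[
W_{\mathbf{y}}=\{A:\mathbf{y}^TA=0\},
\]
each of dimension $(m-1)n$.

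\textbf{Necessity: maximal subspaces and standard form.} The first step is to show that every linear subspace contained in the non-SP set lies in some $W_{\mathbf{y}}$. For a subspace $U$ in the non-SP set, $A$ and $-A$ are both non-SP for every $A\in U$. I would attempt this with a compactness argument over the simplex of $\mathbf{y}$'s together with a minimax (separation) argument. It follows that the $W_{\mathbf{y}}$ are exactly the maximal subspaces of the non-SP set. Consequently $\mathcal{L}^{-1}$ permutes them: $\mathcal{L}^{-1}(W_{\mathbf{y}})=W_{\varphi(\mathbf{y})}$ for a bijection $\varphi$ of the rays of $\mathbb{R}^m_{\ge0}$.

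Since the $W_{\mathbf{y}}$ for $\mathbf{y}$ in the open orthant span the dual structure, I would extend $\varphi$ to a linear map $\mathbf{y}\mapsto Z\mathbf{y}$, arguing by collinearity preservation on projective lines. From this, $\mathcal{L}^{-1}$ preserves the left-module structure: $\mathcal{L}^{-1}(A)=X'\Psi(A)$ with $\Psi$ acting on rows. A similar analysis of which rank-one matrices $\mathbf{e}_i\mathbf{v}^T$ lie in the cones forces $\Psi(A)=AY'$. This gives $\mathcal{L}(A)=XAY$ with $X,Y$ invertible.

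\textbf{Conclusion.} Theorem~\ref{Dorsey}(1) then says that either $X$ is RP and $Y$ is IN, or $-X$ is RP and $-Y$ is IN. In the second case, $XAY=(-X)A(-Y)$, so the sign can be absorbed and the map has the required form.

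\textbf{Main obstacle.} I expect the hardest steps to be the maximality claim for the subspaces $W_{\mathbf{y}}$ and the passage from "$\mathcal{L}^{-1}$ permutes the $W_{\mathbf{y}}$" to the standard form. The latter needs the hypothesis $m\geq n\geq 2$, to avoid degenerate cases where $W_{\mathbf{y}}$ does not determine the row structure. If the projective-geometry route stalls, I would instead use invertibility to show that $\mathcal{L}$ maps the boundary of $\mathrm{SP}$ into its closure. I would then identify the rank-one matrices $\mathbf{u}\mathbf{v}^T$ as extreme objects of that boundary, which are preserved, and use the standard rank-one preserver theorem to obtain the form $XAY$.
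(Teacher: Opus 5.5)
The survey only cites this theorem and gives no proof, so I am judging your argument on its own. Sufficiency is correct. The last step is also correct: once $\mathcal{L}(A)=XAY$ is known, Theorem~\ref{Dorsey}(1) plus absorbing the sign finishes, and $X$ is invertible because $\mathcal{L}$ is.

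The gap is the lemma carrying the necessity direction. It is false that every subspace contained in the non-SP set lies in some $W_{\mathbf{y}}$, and the $W_{\mathbf{y}}$ are not the maximal such subspaces. For $m=n=2$ take $A=\diag(-1,1)$. Then $A\mathbf{x}=(-x_1,x_2)^T$ and $-A\mathbf{x}=(x_1,-x_2)^T$ are never entrywise positive for $\mathbf{x}\geq 0$, so $\mathrm{span}(A)$ consists of non-SP matrices. Yet $A$ is invertible, so $\mathbf{y}^TA=0$ forces $\mathbf{y}=\mathbf{0}$, and no maximal subspace containing $A$ is a $W_{\mathbf{y}}$. No compactness or minimax argument can produce the common $\mathbf{y}$. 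The non-SP set is a non-convex union of the cones $\{A:\mathbf{y}^TA\leq 0\}$, and having $\mathbf{y}_1^TA\leq 0\leq\mathbf{y}_2^TA$ for different $\mathbf{y}_1,\mathbf{y}_2\geq 0$ gives no nonzero $\mathbf{y}\geq 0$ with $\mathbf{y}^TA=0$.

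There are three smaller problems. First, $\varphi$ is in general only injective, not a bijection of rays. For $\mathcal{L}(A)=XA$ with $X=\begin{bsmallmatrix}1&1\\0&1\end{bsmallmatrix}$ one gets $\mathcal{L}^{-1}(W_{\mathbf{y}})=W_{X^T\mathbf{y}}$, and the ray of $\mathbf{e}_1$ is never reached. Second, collinearity preservation is vacuous when $m=2$, since all rays of $\mathbb{R}^2_{\geq 0}$ lie on one projective line. Third, your fallback via rank-one ``extreme objects'' is not an argument: SP is not convex, and nothing you have shown forces $\mathcal{L}$ to preserve rank one.

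What the argument actually needs is the \emph{maximum-dimension} version, which has a short proof. Let $U$ be a subspace of non-SP matrices. For every $\mathbf{x}\geq 0$, the subspace $U\mathbf{x}=\{A\mathbf{x}:A\in U\}$ misses the open orthant, so $\dim U\leq\sum_j\dim U\mathbf{e}_j\leq(m-1)n$. If equality holds, $A\mapsto(A\mathbf{e}_1,\ldots,A\mathbf{e}_n)$ maps $U$ onto $\prod_j U\mathbf{e}_j$. Then $U(\mathbf{e}_i+\mathbf{e}_j)=U\mathbf{e}_i+U\mathbf{e}_j$ must also be a proper subspace, so all $U\mathbf{e}_j$ equal one hyperplane $\mathbf{y}^\perp$. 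Since this hyperplane misses the open orthant, $\mathbf{y}\geq 0$ up to sign, i.e.\ $U=W_{\mathbf{y}}$. Hence the injective map $\mathcal{L}^{-1}$ sends each $W_{\mathbf{y}}$ onto some $W_{\varphi(\mathbf{y})}$.

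To reach standard form, intersect the $W_{\mathbf{y}}$ instead of using projective geometry. If $\mathbf{v}$ has entries of both signs, then $\mathbf{v}^\perp$ contains a positive vector and so is spanned by nonnegative vectors. Therefore $\{\mathbf{v}\mathbf{u}^T:\mathbf{u}\in\mathbb{R}^n\}=\bigcap_{\mathbf{0}\neq\mathbf{y}\in\mathbf{v}^\perp\cap\mathbb{R}^m_{\geq 0}}W_{\mathbf{y}}$, and a dimension count shows that $\mathcal{L}^{-1}$ maps it onto $\{\psi(\mathbf{v})\mathbf{u}^T\}$. Write $\mathcal{L}^{-1}(\mathbf{v}\mathbf{u}^T)=\psi(\mathbf{v})(T_{\mathbf{v}}\mathbf{u})^T$ and apply linearity to $\mathbf{v}_1$, $\mathbf{v}_2$, $\mathbf{v}_1+\mathbf{v}_2$ in this open set. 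The image of $(\mathbf{v}_1+\mathbf{v}_2)\mathbf{u}^T$ has rank at most one for every $\mathbf{u}$, while $\psi(\mathbf{v}_1),\psi(\mathbf{v}_2)$ are independent, so $T_{\mathbf{v}_1}\propto T_{\mathbf{v}_2}$. This gives $\mathcal{L}^{-1}(A)=ZAT^T$ with $Z,T$ invertible.

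With these replacements your outline goes through. As far as I can see, the repaired argument nowhere uses $m\geq n$, contrary to your remark.
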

	
	We now note the following questions concerning minimally semipositive matrices:
	\begin{itemize}
		\item[(1)] What happens in Theorem~\ref{Dorsey}(2) when $m=n$, for both into and onto linear preservers of standard forms of MSP?
		\item[(2)] Can all onto linear preservers of MSP be characterized similarly to those of SP, as in Theorem~\ref{Theorem_SP_onto}?
	\end{itemize} 
	The first question was addressed by Choudhury--Kannan--Sivakumar~\cite{Choudhury_et_al_18} in 2018. Specifically, they proved the following result.
	\begin{theorem}\cite[Theorems 2.10, 2.11, 2.13, and 2.14]{Choudhury_et_al_18}
		Let $\mathcal{L}:\mathbb{R}^{n\times n}\to\mathbb{R}^{n\times n}$ be a linear transformation of the form $\mathcal{L}(A)=XAY$ or $\mathcal{L}(A)=XA^TY$, where $X$ and $Y$ are fixed real $n\times n$ matrices. Then
		\begin{itemize}
			\item[(1)] $\mathcal{L}$ is an onto linear preserver of MSP if and only if $X$ and $Y$ are nonnegative monomial matrices, or $-X$ and $-Y$ are nonnegative monomial matrices.
			
			\item[(2)] $\mathcal{L}$ is an into linear preserver of MSP if and only if $X$ and $Y$ are IN, or $-X$ and $-Y$ are IN.
		\end{itemize}
	\end{theorem}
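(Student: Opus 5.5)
The plan is to reduce everything to a statement about inverse nonnegative matrices. The first step is the known characterization of square minimally semipositive matrices (see~\cite{Johnson_Smith_Tsatsomeros20}): an $n\times n$ matrix $A$ is MSP if and only if $A$ is invertible and $A^{-1}\in\mathbb{R}^{n\times n}_{\scriptscriptstyle\geq0}$, that is, if and only if $A$ is IN.

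I would recall the mechanism behind this characterization rather than reprove it in full.
\begin{itemize}
\item If $A$ is IN, then $\mathbf{x}=A^{-1}\mathbf{1}\geq0$ satisfies $A\mathbf{x}=\mathbf{1}>0$, so $A$ is SP. Deleting any column $j$ leaves a non-SP matrix, because row $j$ of $A^{-1}$ is nonnegative and nonzero and annihilates the remaining columns.
\item Conversely, a square MSP matrix can be shown to be invertible with a nonnegative inverse. This converse direction is the step I expect to be the main obstacle. If I have to prove it, I would use a Farkas/Ville alternative: deleting column $j$ destroys semipositivity, which produces $\mathbf{0}\neq\mathbf{y}^{(j)}\geq0$ with $\mathbf{y}^{(j)T}A$ nonpositive off column $j$. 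I would then combine this with a semipositivity vector for $A$ to get $\mathbf{y}^{(j)T}A=c_j\mathbf{e}^{(j)T}$ with $c_j>0$. Stacking the rows $\mathbf{y}^{(j)T}/c_j$ gives a nonnegative left inverse of $A$.
\end{itemize}
Since $A$ is IN if and only if $A^T$ is IN, the class MSP$_n$ is closed under transposition. So it suffices to treat $\mathcal{L}(A)=XAY$; the map $XA^TY$ is its composition with $A\mapsto A^T$.

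For part (2), the sufficiency is immediate: if $X,Y$ are IN (or $-X,-Y$ are IN), then $(XAY)^{-1}=Y^{-1}A^{-1}X^{-1}$ is a product of nonnegative matrices, the two signs cancelling in the second case.

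For necessity in part (2), I would proceed in three steps.
\begin{itemize}
\item Applying $\mathcal{L}$ to $A=I_n$ shows that $XY$ is invertible, so $X$ and $Y$ are invertible.
\item For every invertible $B\geq0$, setting $A=B^{-1}$ gives that $Y^{-1}BX^{-1}\geq0$. Invertible nonnegative matrices are dense in $\mathbb{R}^{n\times n}_{\scriptscriptstyle\geq0}$, so $Y^{-1}E_{kl}X^{-1}\geq0$ for all elementary matrices $E_{kl}$.
\item The entries of $Y^{-1}E_{kl}X^{-1}$ are $(Y^{-1})_{ik}(X^{-1})_{lj}$. Hence all products of an entry of $Y^{-1}$ with an entry of $X^{-1}$ are nonnegative. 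Since both matrices are invertible and so nonzero, $Y^{-1},X^{-1}$ are either both nonnegative or both nonpositive. This says exactly that $X,Y$ are IN, or $-X,-Y$ are IN.
\end{itemize}

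For part (1), the sufficiency is clear, since a nonnegative monomial matrix and its inverse are both nonnegative monomial, so such maps permute the IN class. For necessity, I would use that an onto preserver $\mathcal{L}(A)=XAY$ of MSP is in particular an into preserver, so part (2) makes $X,Y$ invertible. The inverse map $A\mapsto X^{-1}AY^{-1}$ then also maps MSP onto MSP, and applying part (2) to both maps gives that $X^{-1},Y^{-1}$ and $X,Y$ are (up to a common global sign) simultaneously nonnegative. The last step is the standard fact that a nonnegative matrix with nonnegative inverse is monomial: each row of $X$ must meet each column of $X^{-1}$ in a single support position to produce the zero off-diagonal entries of $I_n$. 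The same argument applies to $Y$, so $X,Y$ (or $-X,-Y$) are nonnegative monomial.
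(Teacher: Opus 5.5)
The survey only cites this result, so there is no in-paper proof to compare against. Your overall argument is correct and gives a complete proof of both parts, provided you cite the classical fact that a square matrix is MSP if and only if it is IN. That argument has four steps:
\begin{itemize}
\item reduce MSP to IN and handle $XA^TY$ by transposition;
\item test $\mathcal{L}(I_n)=XY$ to get invertibility of $X$ and $Y$;
\item read off $(Y^{-1})_{ik}(X^{-1})_{lj}\geq 0$ from $Y^{-1}BX^{-1}\geq 0$;
\item for part (1), apply part (2) to $\mathcal{L}^{-1}$ and use that a nonnegative matrix with nonnegative inverse is monomial.
\end{itemize}

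Your fallback proof of MSP $\Rightarrow$ IN, however, contains a false step.
\begin{itemize}
\item A Ville certificate only gives $\mathbf{y}^{(j)T}A\mathbf{e}^{(k)}\leq 0$ for $k\neq j$. Pairing it with a semipositivity vector $\mathbf{x}$ shows $c_j:=\mathbf{y}^{(j)T}A\mathbf{e}^{(j)}>0$, but it does not make the other entries vanish.
\item For example, $A=\begin{bsmallmatrix}1&-1\\0&1\end{bsmallmatrix}$ is IN and hence MSP. The vectors $\mathbf{y}^{(1)}=\mathbf{e}^{(1)}$ and $\mathbf{y}^{(2)}=\mathbf{e}^{(2)}$ are valid certificates for the two column deletions, with $c_1=c_2=1$. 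Stacking them gives $I_2$, which is not a left inverse of $A$.
\end{itemize}

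The repair is short. The stacked matrix $Z\geq 0$ satisfies $ZA=I_n-N$, where $N\geq 0$ has zero diagonal, so $ZA$ is a Z-matrix. Moreover $ZA\mathbf{x}>0$, because each row of $Z$ is nonnegative and nonzero while $A\mathbf{x}>0$. Hence $ZA$ is an M-matrix, so $(ZA)^{-1}\geq 0$, and $A^{-1}=(ZA)^{-1}Z\geq 0$.

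A more direct alternative avoids Ville's theorem altogether. Minimality forces $\mathbf{x}>0$. Suppose $A\mathbf{z}=\mathbf{0}$ with $\mathbf{z}\neq\mathbf{0}$, or some column $A^{-1}\mathbf{e}^{(k)}$ has a negative entry. Move $\mathbf{x}$ along $\pm\mathbf{z}$, respectively along $A^{-1}\mathbf{e}^{(k)}$, until a coordinate first reaches $0$. The new vector is nonnegative with a zero entry and still maps to a positive vector, which contradicts minimality.

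Finally, in part (1) the phrase ``up to a common global sign'' needs a one-line justification. If $s_1X^{-1}\geq 0$ and $s_2X\geq 0$ with $s_1,s_2\in\{\pm1\}$, then $s_1s_2I_n=(s_2X)(s_1X^{-1})\geq 0$, which forces $s_1=s_2$.
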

	We conclude this subsection by noting that while significant progress has been made in classifying linear preservers of semipositivity, many questions remains unanswered, even for preservers of particular forms.
	
	In this survey, we have examined the linear preservers associated with several positivity classes of matrices. Although onto preservers have been determined in most cases, only certain special forms of into preservers are currently known. A complete classification of into preservers for many of these matrix classes remains an open problem. We end this survey by summarizing the results discussed thus far in Table~\ref{Table_LPP_Summary}. 

\renewcommand{\arraystretch}{1.40}
\small
	\begin{longtable}{|c|c|c|c|c|c|}
			\hline
			\parbox{0.5cm}{\textbf{S.} \\ \textbf{No.}} & $\mathbf{V}$ & $\mathbf{\mathcal{S}}$ & \parbox{1cm}{\textbf{Into/} \\ \textbf{onto}} & \textbf{Form of} $\mathbf{\mathcal{L}}$ & $\mathbf{\mathcal{S}}$\textbf{-preservers} \\ \hline\hline 
			
			(1) & \makecell{Real space of \\ $n\times n$ Hermitian \\ matrices} & \begin{tabular}[c]{@{}c@{}} PSD, \\ PD \end{tabular} & Onto & General & \makecell{$A\mapsto RAR^*$ or $A\mapsto RA^TR^*$, \\ $R\in\mathbb{C}^{n\times n}$ invertible matrix}  \\ \cline{6-6}
			&  & & & & Reference:\cite[Theorem 2]{Schneider65}  \\ \hline
			
			(2) & $\mathcal{S}_n$, $n=2,3$ & cp-rank & Onto & General & $A\mapsto S^TAS$, $S\in\mathbb{R}^{n\times n}_{\scriptscriptstyle\geq0}$ invertible matrix \\ \cline{6-6}
			&  & & & & Reference: \cite[Theorem 3.5]{Monderer22}  \\ \hline

			(3) & $\mathcal{S}_n$, $n\geq4$ & cp-rank & Onto & General & $A\mapsto Q^TAQ$, $Q\in\mathbb{R}^{n\times n}_{\scriptscriptstyle\geq0}$ monomial matrix \\ \cline{6-6}
			
			& & & & & References: \cite[Theorem 4.5]{Monderer22}, \cite{Shitov23} \\ \hline

			(4) & $\mathcal{S}_{n}$ & \begin{tabular}[c]{@{}c@{}} COP$_n$, \\ SCOP$_n$ \end{tabular} & Onto & General  & $A\mapsto Q^TAQ$, $Q\in\mathbb{R}^{n\times n}_{\scriptscriptstyle\geq0}$ monomial matrix \\ \cline{6-6}
			&  & & & & Reference: \cite[Theorem 1]{Shitov21}  \\ \hline

			\multirow{3}{*}{(5)} & \multirow{3}{*}{$\mathcal{S}_{n}$} & \multirow{3}{*}{COP$_n$} & \multirow{3}{*}{Into} & $A\mapsto XAY$ & $X=Y^T$, $X\in\mathbb{R}^{n\times n}_{\scriptscriptstyle\geq0}$ \\ \cline{6-6}
			
			& & & & & Reference: \cite[Theorem 2.2]{Furtado_et_al_21} \\ \cline{5-6}
			
			& & & & $A\mapsto H\circ A$ & $H\in\mathbb{R}^{n\times n}$ completely positive matrix\\ \cline{6-6}
			&  & & & & Reference: \cite[Theorem 2.4]{Furtado_et_al_21} \\ \hline
			
			(6) & 
			$\mathbb{R}^{2\times2}$ &
			\begin{tabular}[c]{@{}c@{}} SR, \\ SR$_1$ \end{tabular} & Onto & General &
			\begin{tabular}[c]{@{}l@{}}
				$A\mapsto H\circ A$, $H\in\mathbb{R}^{2\times2}$ is entrywise positive; \\
				$A\mapsto -A$; \\
				$A\mapsto P_2A$, $P_2\in\mathbb{R}^{2\times 2}$ an exchange matrix; \\
				$A\mapsto A^T$; \\
				$\begin{bsmallmatrix} a_{11} & a_{12} \\ a_{21} & a_{22} \end{bsmallmatrix} 
				\mapsto 
				\begin{bsmallmatrix} a_{11} & a_{12} \\ a_{22} & a_{21} \end{bsmallmatrix}$ \\
			\end{tabular}
			\\ \cline{6-6}
			 &  & & & & Reference: \cite[Theorem B]{CY-LPP24}  \\ \hline
			
			(7) &
			$\mathbb{R}^{2\times2}$ &
			\begin{tabular}[c]{@{}c@{}} SSR \end{tabular} & Onto & General &
			\begin{tabular}[c]{@{}l@{}}
				$A\mapsto FAE$, $F,E\in\mathcal{D}_{\scriptscriptstyle{>0}}(2)$; \\
				$A\mapsto -A$; \\
				$A\mapsto P_2A$, $P_2\in\mathbb{R}^{2\times2}$ an exchange matrix; \\
				$A\mapsto A^T$
			\end{tabular}
			\\ \cline{6-6}
			&  & & & & Reference: \cite[Theorem 2.17]{CY-LPP24} \\ \hline
			
			(8) & 
			\begin{tabular}[c]{@{}c@{}} 
				$\mathbb{R}^{m\times n}$ \\ 
				with \\ $m,n\geq2$ \& \\ $\max\{m,n\}\geq3$
			\end{tabular} &
			\begin{tabular}[c]{@{}c@{}} 
				SR, \\ SR$_k$, \\ SSR, \\ SSR$_k$; \\ 
				$2\leq k\leq$ \\ $\min\{m,n\}$
			\end{tabular} & Onto & General &
			\begin{tabular}[c]{@{}l@{}}
				$A\mapsto FAE$, $F\in\mathcal{D}_{\scriptscriptstyle{>0}}(m)$ and $E\in\mathcal{D}_{\scriptscriptstyle{>0}}(n)$; \\
				$A\mapsto -A$; \\
				$A\mapsto P_mA$, $P_m\in\mathbb{R}^{m\times m}$ an exchange matrix; \\
				$A\mapsto AP_n$, $P_n\in\mathbb{R}^{n\times n}$ an exchange matrix; \\
				$A\mapsto A^T$, provided $m=n$
			\end{tabular}
			\\ \cline{6-6}
			&  & & & & Reference: \cite[Theorem A]{CY-LPP24} \\ \hline
			
			(9) &
			\begin{tabular}[c]{@{}c@{}} 
				$\mathbb{R}^{m\times n}$ \\ 
				with \\ $m,n\geq2$
			\end{tabular} &
			\begin{tabular}[c]{@{}c@{}} 
				SR$(\epsilon)$, \\ SR$_k(\epsilon)$, \\ 
				SSR$(\epsilon)$, \\ SSR$_k(\epsilon)$; \\ 
				$2\leq k\leq$ \\ $\min\{m,n\}$
			\end{tabular} & Onto & General &
			\begin{tabular}[c]{@{}l@{}}
				$A\mapsto FAE$, $F\in\mathcal{D}_{\scriptscriptstyle{>0}}(m)$ and $E\in\mathcal{D}_{\scriptscriptstyle{>0}}(n)$; \\
				$A\mapsto P_mAP_n$, $P_m\in\mathbb{R}^{m\times m}$ and $P_n\in\mathbb{R}^{n\times n}$ \\ are exchange matrices; \\
				$A\mapsto A^T$, provided $m=n$
			\end{tabular}
			\\ \cline{6-6}
			&  & & & & Reference: \cite[Theorem C]{CY-LPP24} \\ \hline
			
			(10) &
			\begin{tabular}[c]{@{}c@{}} 
				$\mathbb{R}^{n\times n}$
			\end{tabular} &
			\begin{tabular}[c]{@{}c@{}} 
				M, \\ M$_0$, \\ IM, \\ $\overline{\mathrm{IM}}$
			\end{tabular} & Onto & General &
			\begin{tabular}[c]{@{}l@{}}
				$A\mapsto FAE$, $F,E\in\mathcal{D}_{\scriptscriptstyle{>0}}(n)$; \\ $A\mapsto A^T$; \\ $A\mapsto Q^TAQ$, $Q\in\mathbb{R}^{n\times n}$ permutation matrix
			\end{tabular}
			\\ \cline{6-6}
			&  & & & & References: \cite[Theorem 2]{BHJ85}, \cite[Theorem 4.1]{Tam_IM90} \\ \hline
			
			(11) &
			\begin{tabular}[c]{@{}c@{}} 
				$\mathbb{C}^{n\times n}$
			\end{tabular} &
			\begin{tabular}[c]{@{}c@{}} 
				H, \\ H$_0$
			\end{tabular} & Onto & General &
			\begin{tabular}[c]{@{}l@{}}
				$A\mapsto FAE$, $F,E\in\mathcal{D}_{\scriptscriptstyle{>0}}(n)$; \\
				$A\mapsto A^T$; \\ 
				$A\mapsto Q^TAQ$, $Q\in\mathbb{R}^{n\times n}$ permutation matrix; \\
				$A\mapsto K\circ A$, with $K=(k_{ij})$ is such that \\ $|k_{ij}|=1$ for all $i,j\in[n]$
			\end{tabular}
			\\ \cline{6-6}
			&  & & & & Reference: \cite[Corollary 1]{BHJ85} \\ \hline
			
			(12) &
			\begin{tabular}[c]{@{}c@{}} 
				$\mathbb{R}^{n\times n}$
			\end{tabular} &
			\begin{tabular}[c]{@{}c@{}} 
				P$_0$, \\ P, \\ P$_0^+$, \\ P$_{03}$, \\ P$_3$, \\ $\mathcal{D}_0$, \\ $\mathcal{D}$, \\ $\mathcal{A}_0$, \\ $\mathcal{A}$
			\end{tabular} & Onto & General &
			\begin{tabular}[c]{@{}l@{}}
				$A\mapsto FAE$, $F,E\in\mathcal{D}_{\scriptscriptstyle{>0}}(n)$; \\ $A\mapsto A^T$; \\ $A\mapsto Q^TAQ$, $Q\in\mathbb{R}^{n\times n}$ permutation matrix; \\ 
				$A\mapsto SAS$, $S\in\mathcal{D}(n)$ signature matrix
			\end{tabular}
			\\ \cline{6-6}
			&  & & & & Reference: \cite[Theorem 1]{BHJ85} \\ \hline
			
			(13) &
			\begin{tabular}[c]{@{}c@{}} 
				$\mathbb{C}^{n\times n}$
			\end{tabular} &
			\begin{tabular}[c]{@{}c@{}} 
				P$_0$, \\ P, \\ P$_0^+$, \\ P$_{03}$, \\ P$_3$, \\ $\mathcal{D}_0$, \\ $\mathcal{D}$, \\ $\mathcal{A}_0$, \\ $\mathcal{A}$
			\end{tabular} & Onto & General &
			\begin{tabular}[c]{@{}l@{}}
				$A\mapsto FAE$, $F,E\in\mathcal{D}(n)$ with $FE\in\mathcal{D}_{\scriptscriptstyle{>0}}(n)$; \\ $A\mapsto A^T$; \\ $A\mapsto Q^TAQ$, $Q\in\mathbb{R}^{n\times n}$ permutation matrix
			\end{tabular}
			\\ \cline{6-6}
			&  & & & & Reference: \cite{BHJ85} \\ \hline
			
%
			
			(14) &
			\begin{tabular}[c]{@{}c@{}} 
				$\mathbb{C}^{2\times 2}$
			\end{tabular} &
			\begin{tabular}[c]{@{}c@{}} 
				P$_0$, \\ P, \\ P$_0^+$
			\end{tabular} & Into & $\mathcal{L}$ invertible &
			\begin{tabular}[c]{@{}l@{}}
				$A\mapsto FAE$, $F,E\in\mathcal{D}(2)$ with $FE\in\mathcal{D}_{\scriptscriptstyle{>0}}(2)$; \\
				$A\mapsto A^T$; \\ 
				$\begin{bsmallmatrix}
					a_{11} & a_{12} \\
					a_{21} & a_{22}
				\end{bsmallmatrix}\mapsto \begin{bsmallmatrix}
					k_1a_{11} + k_2a_{22} & a_{12} \\
					a_{21} & t_1a_{11} + t_2a_{22}
				\end{bsmallmatrix}$, \\ with $k_1,k_2,t_1,t_2\geq0$ and $k_1t_2\neq k_2t_1$ \\ such that either
				$k_1t_2 + k_2t_1 \geq 1$ or \\ $1-2(k_1t_2+k_2t_1)+(k_1t_2-k_2t_1)^2\leq0$
			\end{tabular}
			\\ \cline{6-6}
			&  & & & & Reference: \cite{HJ86} \\ \hline
			
			(15) &
			\begin{tabular}[c]{@{}c@{}}
				$\mathbb{C}^{n\times n}, n\geq3$
			\end{tabular} &
			\begin{tabular}[c]{@{}c@{}}
				P$_0$, \\ P, \\ P$_0^+$, \\ P$_{0k}$, \\ P$_k$, \\ P$_{0k}^+$, \\ $k\geq4$
			\end{tabular} & Into & $\mathcal{L}$ invertible &
			\begin{tabular}[c]{@{}l@{}}
				$A\mapsto FAE$, $F,E\in\mathcal{D}(n)$ with $FE\in\mathcal{D}_{\scriptscriptstyle{>0}}(n)$; \\ $A\mapsto A^T$; \\ $A\mapsto Q^TAQ$, $Q\in\mathbb{R}^{n\times n}$ permutation matrix; \\ $A\mapsto A+D$, with $\diag(A)\mapsto\diag(A+D)$ \\ invertible. Here $D\in\mathcal{D}(n)$ whose diagonal \\ entries are nonnegative linear combinations \\ of the diagonal entries of A
			\end{tabular}
			\\ \cline{6-6}
			&  & & & & Reference: \cite{HJ86} \\ \hline
			
			\multirow{5}{*}{(16)} & \multirow{4}{*}{$\mathbb{R}^{m\times n}$} & \multirow{4}{*}{SP} & Onto & General & \begin{tabular}[c]{@{}l@{}}
				$A\mapsto XAY$, $X\in\mathbb{R}^{m\times m}_{\scriptscriptstyle\geq0}$, $Y\in\mathbb{R}^{n\times n}_{\scriptscriptstyle\geq0}$ are \\ monomial matrices
			\end{tabular} \\ \cline{6-6}
			
			& & & & & Reference: \cite[Theorem 3.5]{Dorsey_et_al_16} \\ \cline{4-6}
			
			& & & Into & $A\mapsto XAY$ & \begin{tabular}[c]{@{}l@{}}
				$X\in\mathbb{R}^{m\times m}$ is RP and $Y\in\mathbb{R}^{n\times n}$ is IN, or \\ $-X\in\mathbb{R}^{m\times m}$ is RP and $-Y\in\mathbb{R}^{n\times n}$ is IN
			\end{tabular} \\ \cline{6-6}
			
			& & & & & Reference: \cite[Theorem 2.4]{Dorsey_et_al_16}  \\ \cline{2-6}
			
			& \makecell{$\mathbb{R}^{m\times n}$, \\ $m\geq n\geq2$} & SP & Into & $\mathcal{L}$ invertible & \begin{tabular}[c]{@{}l@{}}
				$A\mapsto XAY$, $X\in\mathbb{R}^{m\times m}$ is invertible RP \\ and $Y\in\mathbb{R}^{n\times n}$ is IN
			\end{tabular} \\ \cline{6-6}
			&  & & & & Reference: \cite[Theorems 3.3, 3.11, Remark 3.5]{Jayaraman_mer_21} \\ \hline
			
			(17) & $\mathbb{R}^{m\times n}$, $m>n$ & MSP & Onto & $A\mapsto XAY$ & \begin{tabular}[c]{@{}l@{}}
					$X\in\mathbb{R}^{m\times m}_{\scriptscriptstyle\geq0}$, $Y\in\mathbb{R}^{n\times n}_{\scriptscriptstyle\geq0}$ are monomial matrices, or \\
					$-X\in\mathbb{R}^{m\times m}_{\scriptscriptstyle\geq0}$, $-Y\in\mathbb{R}^{n\times n}_{\scriptscriptstyle\geq0}$ are monomial matrices
			\end{tabular} \\ \cline{6-6} 
			
			& & & & & Reference: \cite[Theorem 2.11]{Dorsey_et_al_16} \\ \hline

			(18) & \multirow{2}{*}{$\mathbb{R}^{n\times n}$} & \multirow{2}{*}{MSP} & Onto & \makecell{$A\mapsto XAY$ \\ $A\mapsto XA^TY$} & \begin{tabular}[c]{@{}l@{}}
				$X\in\mathbb{R}^{n\times n}_{\scriptscriptstyle\geq0}$, $Y\in\mathbb{R}^{n\times n}_{\scriptscriptstyle\geq0}$ are monomial matrices, or \\
				$-X\in\mathbb{R}^{n\times n}_{\scriptscriptstyle\geq0}$, $-Y\in\mathbb{R}^{n\times n}_{\scriptscriptstyle\geq0}$ are monomial matrices
			\end{tabular} \\ \cline{6-6} 
			
			& & & & & Reference: \cite[Theorems 2.13, 2.14]{Choudhury_et_al_18}  \\ \cline{4-6}
			
			&  &  & Into & \makecell{$A\mapsto XAY$ \\ $A\mapsto XA^TY$} & \begin{tabular}[c]{@{}l@{}}
				$X,Y\in\mathbb{R}^{n\times n}$ are IN, or \\ $-X,-Y\in\mathbb{R}^{n\times n}$ are IN
			\end{tabular} \\ \cline{6-6}
			&  & & & & Reference: \cite[Theorems 2.10, 2.11]{Choudhury_et_al_18} \\ \hline
		\caption{Table summarizing linear $\mathcal{S}$-preservers defined on a vector space $V$.}
		\label{Table_LPP_Summary}
	\end{longtable}
    
	\subsubsection*{Acknowledgments}
	PNC was partially supported by ANRF Prime Minister Early Career Research Grant ANRF/ECRG/2024/002674/PMS  from ANRF, Govt.~of India and INSPIRE Faculty Fellowship Research Grant DST/INSPIRE/04/2021/002620 from DST, Govt.~of India. SY was supported by INSPIRE Faculty Fellowship Research Grant DST/INSPIRE/04/2021/002620 from DST, Govt.~of India (PI: Projesh Nath Choudhury).

%
%
%
%


\begin{thebibliography}{10}
		
%
%
%
%
		
        
        \bibitem{Nowaihi88}
        A.~al-Nowaihi.
        \newblock P-matrices: an equivalent characterisation.
        \newblock\href{https://doi.org/10.1016/0021-8693(88)90097-X}{\em J. Algebra}, 112:385--387, 1988.
        
        
        
        \bibitem{Berman_Grone}
        A.~Berman and R.~Grone.
        \newblock Bipartite completely positive matrices.
        \newblock\href{https://doi.org/10.1017/S0305004100064835}{\em Proc. Cambridge Philos. Soc.}, 103:269--276, 1988.
		
		
		\bibitem{BHJ85}
		A.~Berman, D.~Hershkowitz, and C.R.~Johnson.
		\newblock Linear transformations that preserve certain positivity classes of matrices.
		\newblock\href{https://doi.org/10.1016/0024-3795(85)90205-8}{\em Linear Algebra Appl.}, 68:9--29, 1985.
		
		
		
		
		
		
		\bibitem{Berman_Plemmons}
		A.~Berman and A.J.~Plemmons.
		\newblock {\em Nonnegative matrices in the mathematical sciences}.
		\newblock\href{https://epubs.siam.org/doi/10.1137/1.9781611971262}{SIAM}, 1994.
		
		

        \bibitem{Cao}
        C.~Cao and X.~Tang.
        \newblock Determinant preserving transformations on symmetric matrix spaces.
        \newblock\href{https://journals.uwyo.edu/index.php/ela/article/view/267}{\em Electron. J. Linear Algebra}, 11:205--211, 2004.
        
        
        \bibitem{Choi}
        M.-D.~Choi.
        \newblock Positive semidefinite biquadratic forms.
        \newblock\href{https://doi.org/10.1016/0024-3795(75)90058-0}{\em Linear Algebra Appl.}, 12:95--100, 1975.
		
		
		\bibitem{Choudhury_et_al_18}
		P.N.~Choudhury, M.R.~Kannan, and K.C.~Sivakumar.
		\newblock A note on linear preservers of semipositive and minimally semipositive matrices.
		\newblock\href{https://doi.org/10.13001/1081-3810.3864}{\em Electron. J. Linear Algebra}, 34:687--694, 2018.
		
		\bibitem{CY-SSR_Construction24}	
		P.N.~Choudhury and S.~Yadav.
		\newblock Constructing strictly sign regular matrices of all sizes and sign patterns.
		\newblock\href{https://londmathsoc.onlinelibrary.wiley.com/doi/10.1112/blms.70080}{\em Bull. London Math. Soc.}, 57:2077--2096, 2025.
		
		
		\bibitem{CY-LPP24}	
		P.N.~Choudhury and S.~Yadav.
		\newblock Sign regularity preserving linear operators.
		\newblock\href{https://doi.org/10.1112/blms.70209}{\em Bull. London Math. Soc.}, 58, article \#e70209 (24pp.), 2026.
		
		
		
		
		
		\bibitem{Diananda62}
		P.H.~Diananda.
		\newblock On nonnegative forms and in real variables some or all of which are nonnegative.
		\newblock\href{https://www.cambridge.org/core/journals/mathematical-proceedings-of-the-cambridge-philosophical-society/article/on-nonnegative-forms-in-real-variables-some-or-all-of-which-are-nonnegative/4500B3D1274524653579ACF856B29A47}{\em Proc. Cambridge Philos. Soc.}, 58:17--25, 1962.
		
		\bibitem{Dolinar}
		G.~Dolinar and P.~\v{S}emrl.
		\newblock Determinant preserving maps on matrix algebras.
		\newblock\href{https://www.sciencedirect.com/science/article/pii/S002437950100578X#:~:text=Determinant%20preserving%20maps%20on%20matrix%20algebras%E2%98%86&text=Let%20Mn%20be%20the,with%20det(MN)%3D1.}{\em Linear Algebra Appl.}, 348:189--192, 2002.
		
		
		\bibitem{Dorsey_et_al_16}
		J.~Dorsey, T.~Gannon, N.~Jacobson, C.R.~Johnson, and M.~Turnansky.
		\newblock Linear preservers of semi-positive matrices.
		\newblock\href{https://doi.org/10.1080/03081087.2015.1122723}{\em Linear Multilinear Algebra}, 64:1853--1862, 2016.
		
		\bibitem{Drew_et_al}
		J.H.~Drew, C.R.~ Johnson, and R.~Loewy.
		\newblock Completely positive matrices associated with $M$-matrices.
		\newblock\href{https://www.tandfonline.com/doi/abs/10.1080/03081089408818334}{\em Linear Multilinear Algebra}, 37:303--310, 1994.
		
		\bibitem{Dur}
		M.~D\"ur.
		\newblock Copositive programming -- a survey.
		\newblock\href{https://link.springer.com/chapter/10.1007/978-3-642-12598-0_1}{\em Recent Advances in Optimization and its Applications in Engineering}, Springer Berlin Heidelberg, 3--20, 2010.
		
		\bibitem{Eaten}
	    M.L.~Eaten.
		\newblock On linear transformations which preserve the determinant.
		\newblock\href{https://projecteuclid.org/journals/illinois-journal-of-mathematics/volume-13/issue-4/On-linear-transformations-which-preserve-the-determinant/10.1215/ijm/1256053433.full}{\em Illinois J. Math.}, 13:722--727, 1969.
		
		
		\bibitem{Fiedler-Ptak}
		M.~Fiedler and V.~Pt\'ak.
		\newblock Some generalizations of positive definiteness and monotonicity.
		\newblock\href{https://doi.org/10.1007/BF02166034}{\em Numer. Math.}, 9:163--172, 1966.
		
		\bibitem{FZ02}
		S.~Fomin and A.~Zelevinsky.
		\newblock Cluster algebras. {I}. {F}oundations.
		\newblock \href{http://dx.doi.org/10.1090/S0894-0347-01-00385-X}{\em J.\
			Amer.\ Math.\ Soc.}, 15(2):497--529, 2002.
		
		\bibitem{Fosner_et_al_survey}
		A.~Fo\v{s}ner, Z.~Huang, C.-K.~Li, and N.-S.~Sze. 
		\newblock Linear preservers and quantum information science.
		\newblock\href{https://doi.org/10.1080/03081087.2012.740029}{\em Linear Multilinear Algebra}, 61:1377--1390, 2013.
		
		
		
		
		\bibitem{Frobenius1897}
		G.~Frobenius.
		\newblock \"{U}ber die Darstellung der endlichen Gruppen durch lineare Substitutionen.
		\newblock {\em Sitzungsber. Preuss. Akad. Wiss. Berlin}, 994--1015, 1897.
		
		\bibitem{Fung96}
		H.-K.~Fung.
		\newblock Linear preservers of controllability and/or observability.
		\newblock\href{https://doi.org/10.1016/0024-3795(94)00364-5}{\em Linear Algebra Appl.}, 246:335--360, 1996.
		
		\bibitem{Furtado_et_al_21}
		S.~Furtado, C.R.~Johnson, and Y.L.~Zhang.
		\newblock Linear preservers of copositive matrices.
		\newblock\href{https://doi.org/10.1080/03081087.2019.1692775}{\em Linear Multilinear Algebra}, 69:1779--1788, 2021.
		
		\bibitem{Galashin_et_al_22}
		P.~Galashin, S.N.~Karp, and T.~Lam.
		\newblock Regularity theorem for totally nonnegative flag varieties. 
		\newblock\href{https://doi.org/10.1090/jams/983}{\em J. Amer. Math. Soc.}, 35(2):513--579, 2022.
		
		
		\bibitem{Gale_Nikaido}
		D.~Gale and H.~Nikaido.
		\newblock The Jacobian matrix and global univalence of mappings.
		\newblock\href{https://doi.org/10.1007/BF01360282}{\em Math. Ann.}, 159:81--93, 1965.
		
		\bibitem{GK37}
		F.R.~Gantmacher and M.G.~Krein.
		\newblock {Sur les matrices compl\`etement non n\'egatives et oscillatoires}.
		\newblock {\em Compositio Math.}, 4:445--476, 1937.
		
		\bibitem{GK50}
		F.R.~Gantmacher and M.G.~Krein.
		\newblock {\em Oscillyacionye matricy i yadra i malye kolebaniya mehani\v{c}eskih sistem}.
		\newblock Gosudarstv. Isdat. Tehn.-Teor. Lit., Moscow-Leningrad, 1941.
		
		
		
		
%
%
%
%

		
		
		\bibitem{Gowda_Sznajder_Tao13}
		M.S.~Gowda, R.~Sznajder, and J.~Tao.
		\newblock The automorphism group of a completely positive cone and its Lie algebra.
		\newblock\href{https://doi.org/10.1016/j.laa.2011.10.006}{\em Linear Algebra Appl.}, 438:3862--3871, 2013.
		
		\bibitem{Hahn}
		W.~Hahn.
		\newblock {\em Stability of motion.}
		\newblock\href{https://link.springer.com/book/10.1007/978-3-642-50085-5}{Springer Berlin, Heidelberg}, 1967.
		
		\bibitem{Hall_et_al_63}
		M.Jr.~Hall and M.~Newman.
		\newblock Copositive and completely positive quadratic forms.
		\newblock\href{https://www.cambridge.org/core/journals/mathematical-proceedings-of-the-cambridge-philosophical-society/article/copositive-and-completely-positive-quadratic-forms/2E22B1217CCADD28133ECC2383BDD65C}{\em Proc. Cambridge Philos. Soc.}, 59:329--339, 1963.
		
		
		
		
		
		\bibitem{HJ86}
		D.~Hershkowitz and C.R.~Johnson. 
		\newblock Linear transformations that map the $P$-matrices into themselves.
		\newblock\href{https://doi.org/10.1016/0024-3795(86)90113-8}{\em Linear Algebra Appl.}, 74:23--38, 1986.
		
		\bibitem{Johnson_Horn12}
		R.A.~Horn and C.R.~Johnson.
		\newblock {\em Matrix Analysis} (2nd ed.).
		\newblock\href{https://doi.org/10.1017/CBO9781139020411}{Cambridge: Cambridge University Press}, 2012.
		
%
%

		
		
		\bibitem{Jayaraman_mer_21}
		S.~Jayaraman and V.N.~Mer.
		\newblock On linear preservers of semipositive matrices.
		\newblock\href{https://doi.org/10.13001/ela.2021.5397}{\em Electron. J. Linear Algebra}, 37:88--112, 2021.
		
		
		\bibitem{Johnson_et_al_Inverse_M}
		C.R.~Johnson and R.L.~Smith.
		\newblock Inverse $M$-matrices II.
		\newblock\href{https://doi.org/10.1016/j.laa.2011.02.016}{\em Linear Algebra Appl.}, 435:953--983, 2011.
		
		\bibitem{Johnson_Smith_Tsatsomeros20}
		C.R.~Johnson, R.L.~Smith, and M.J.~Tsatsomeros.
		\newblock {\em Matrix Positivity}.
		\newblock\href{https://doi.org/10.1017/9781108778619}{Cambridge University Press}, 2020.
		
		\bibitem{Kaplan2000}
		W.~Kaplan.
		\newblock A test for copositive matrices.
		\newblock \href{https://www.sciencedirect.com/science/article/pii/S0024379500001385?via%3Dihub}{\em Linear Algebra Appl.}, 313:203--206, 2000.
		
		\bibitem{Karlin64}
		S.~Karlin.
		\newblock Total positivity, absorption probabilities and applications.
		\newblock \href{https://doi.org/10.2307/1993667}{\em Trans. Amer. Math. Soc.}, 111:33--107, 1964.
		
		\bibitem{K68}
		S.~Karlin.
		\newblock {\em Total positivity. {V}ol. {I}}.
		\newblock Stanford University Press, Stanford, CA, 1968.
		
		
		
		
%
		
		
%
%

        \bibitem{Lu94}
        G.~Lusztig.
        \newblock Total positivity in reductive groups.
        \newblock In {\em Lie theory and geometry}, volume 123 of {\em Progr. Math.},
        pages 531--568. Birkh\"{a}user, Boston, MA, 1994.

        \bibitem{Marcus_etal}
        M.~Marcus and H.~Minc.
        \newblock On the relation between the determinant and the permanent.
        \href{https://www.semanticscholar.org/paper/On-the-relation-between-the-determinant-and-the-Marcus-Minc/d41b7d48caf66e74c07a2ee73d04c6c264b47719}{\em Illinois J. Math.}, 5:327--332, 1962.
        
        
        \bibitem{Minkowski1900}
        H.~Minkowski.
        \newblock Zur Theorie der Einheiten in den algebraischen Zahlk\"orpern.
        \newblock\href{https://eudml.org/doc/58467}{\em Math.-Phys. Klasse}, 90--93, 1900.
		
		\bibitem{Minkowski1907}
		H.~Minkowski.
		\newblock {\em Diophantische Approximationen.}
		\newblock\href{https://link.springer.com/book/10.1007/978-3-663-16055-7}{Tuebner, Leipzig}, 1907.
		
		\bibitem{Motzkin52}
		T.S.~Motzkin.
		\newblock{Copositive quadratic forms.}
		\newblock{\em National Bureau of Standards Report}, 1818:11--12, 1952. 
		
		\bibitem{Nagy17}
		G.~Nagy.
		\newblock Determinant preserving maps: an infinite dimensional version of a theorem of Frobenius.
		\newblock \href{https://doi.org/10.1080/03081087.2016.1185082}{\em Linear Multilinear Algebra}, 65:351--360, 2017.
		
		\bibitem{Ostrowski37}
		A.~Ostrowski.
		\newblock \"{U}ber die determinanten mit \"{u}berwiegender Hauptdiagonale.
		\newblock\href{https://link.springer.com/article/10.1007/BF01214284}{\em Comment. Math. Helv.}, 10:69--96, 1937.
		
		
		\bibitem{Ostrowski56}
		A.~Ostrowski.
		\newblock Determinanten mit \"{u}berwiegender Hauptdiagonale und die absolute Konvergenz von linearen Iterationsprozessen.
		\newblock\href{https://link.springer.com/article/10.1007/BF02564340}{\em Comment. Math. Helv.}, 30:175--210, 1956.
		
		\bibitem{pinkus}
		A.~Pinkus.
		\newblock {\em Totally positive matrices}.
		\newblock \href{http://dx.doi.org/10.1017/CBO9780511691713}{Cambridge Tracts in Mathematics, Vol.~181}, Cambridge University Press, Cambridge, 2010.
		
		
		
		\bibitem{Pokora}
		P.~Pokora.
		\newblock Introduction to linear preserver problems. Unpublished remarks.
		
		\bibitem{Polya}
		G.~P\'olya.
		\newblock Aufgabe 424.
		\newblock {\em Arch. Math. Phys. Ser. 3}, 20:271, 1913.
		
		
		\bibitem{Postnikov06}
		A.~Postnikov.
		\newblock Total positivity, Grassmannians, and networks.
		\newblock {\em Preprint}, \href{https://arxiv.org/abs/math/0609764}{ arXiv:math.CO/0609764}, 2006.
		
		
		
		\bibitem{CR_Rao75}
		C.R.~Rao.
		\newblock{\em Linear Statistical Inference and its Applications} (2nd ed.).
		\newblock\href{https://epubs.siam.org/doi/10.1137/1017090}{Wiley Series in Probability and Mathematical Statistics}, John Wiley \& Sons, New York-London-Sydney, 1975.
		
		\bibitem{Ri03}
		K.C.~Rietsch.
		\newblock Totally positive {T}oeplitz matrices and quantum cohomology of
		partial flag varieties.
		\newblock \href{http://dx.doi.org/10.1090/S0894-0347-02-00412-5}%
		{\em J.\ Amer.\ Math.\ Soc.}, 16(2):363--392, 2003.
		
		
		
%
%
%
		\bibitem{Schneider65}
		H.~Schneider. 
		\newblock Positive operators and an inertia theorem. 
		\newblock\href{https://doi.org/10.1007/BF01397969}{\em Numer. Math.}, 7:11--17, 1965.
		
		

	\bibitem{Monderer22}
N.~Shaked-Monderer.
\newblock Preservers of the cp-rank.
\newblock\href{https://doi.org/10.1080/03081087.2021.1948496}{\em Linear Multilinear Algebra}, 70:6211--6222, 2022.
		
		\bibitem{Monderer-Berman}
		N.~Shaked-Monderer and A.~Berman.
		\newblock{\em Copositive and completely positive matrices}.
		\newblock\href{https://www.worldscientific.com/worldscibooks/10.1142/11386?srsltid=AfmBOoqe_9-CzENjuoLylPM-pUZ8Ic-BZF601cP1u_Drhjoal68Awc4L#t=aboutBook}{World Scientific}, 2021.
		
	
		
		
		
		
		\bibitem{Shitov21}
		Y.N.~Shitov.
		\newblock Linear mappings preserving the copositive cone.
		\newblock\href{https://doi.org/10.1090/proc/15432}{\em Proc. Amer. Math. Soc.}, 149:3173--3176, 2021.
		
		
		\bibitem{Shitov23}
		Y.N.~Shitov.
		\newblock Linear mappings preserving the completely positive rank.
		\newblock\href{https://doi.org/10.1016/j.ejc.2022.103641}{\em European J. Combin.}, 109, paper No. 103641 (9 pp.), 2023.
		
		\bibitem{Beasley_Song_et_al_16}
		S.-Z.~Song, L.B.~Beasley, P.~Mohindru, and R.~Pereira.
		\newblock Preservers of completely positive matrix rank.
		\newblock\href{https://doi.org/10.1080/03081087.2015.1082960}{\em Linear Multilinear Algebra}, 64:1258--1265, 2016.
		
		
		
		
		
		
		\bibitem{Tam_IM90}
		B.-S.~Tam and P.-H.~Liou.
		\newblock Linear transformations which map the class of inverse M-matrices onto itself.
		\newblock\href{https://doi.org/10.5556/j.tkjm.21.1990.4651}{\em Tamkang J. Math.}, 21:159--167, 1990.
		
		\bibitem{Tan}
		V.~Tan and F.~Wang.
		\newblock On determinant preserver problems.
		\newblock\href{https://www.sciencedirect.com/science/article/pii/S0024379502007395}{\em Linear Algebra Appl.}, 369:311--317, 2003.
		
		\bibitem{Trefethen-Bau}
		L.N.~Trefethen and D.~Bau.
		\newblock{\em Numerical linear algebra}.
		\newblock \href{https://epubs.siam.org/doi/10.1137/1.9780898719574}{SIAM}, 1997.
		
		
	\end{thebibliography}
\end{document}